\documentclass[11pt,amsfonts]{amsart}
\usepackage{amssymb}
\usepackage{mathrsfs}
\usepackage[all,cmtip]{xy}
\usepackage{pstricks}

\textwidth=15.5cm \hoffset=-1.5cm
\textheight=21.4cm \voffset=-0.8cm
\parindent 15pt
\parskip 0pt

\numberwithin{equation}{section}

\newtheorem{thm}{Theorem}[section]
\newtheorem{defn}[thm]{Definition}
\newtheorem{prop}[thm]{Proposition}
\newtheorem{lemma}[thm]{Lemma}
\newtheorem{cor}[thm]{Corollary}

\newtheorem*{notation}{Notation}

\newcommand{\ds}{\displaystyle}
\newcommand{\inj}{\rightarrowtail}
\newcommand{\surj}{\twoheadrightarrow}
\newcommand{\embed}{\hookrightarrow}
\newcommand{\Bl}{{\rm Bl}}
\newcommand{\Sym}{{\rm Sym}}
\newcommand{\tor}{{\mathbf{t}}}
\newcommand{\fre}{{\mathbf{f}}}
\newcommand{\e}{\varepsilon}

\newcommand{\Gr}{{\rm Gr}}

\newcommand{\ul}{\underline}
\newcommand{\lt}{{\rm lt}}

\newcommand{\CD}{\xymatrix@R=1pc@C=1pc}
\newcommand{\CDR}{\xymatrix@R=1pc}
\newcommand{\CDC}{\xymatrix@C=1pc}

 \DeclareMathOperator{\Quot}{Quot}
 
\DeclareMathOperator{\EExt}{{\it\mathcal{E}\!xt}} \DeclareMathOperator{\Ext}{Ext}
 
\DeclareMathOperator{\HHom}{{\it\mathcal{H}\!om}} \DeclareMathOperator{\Hom}{Hom}
\DeclareMathOperator{\rank}{rank} \DeclareMathOperator{\Coker}{Coker}
 
\DeclareMathOperator{\PProj}{{\bf Proj}} \DeclareMathOperator{\Proj}{Proj}
\DeclareMathOperator{\Img}{Im} \DeclareMathOperator{\Mor}{Mor}



\def\2M{M}

\begin{document}

\title{The space of complete quotients}

\date{}
\author{Yi Hu}
\address{Department of Mathematics, University of Arizona, USA.}
\email{yhu@math.arizona.edu}
\author{Yijun Shao}
\email{yijun.shao@gmail.com}

\maketitle

\begin{abstract}
We introduce {\it complete quotients} over the projective line and
prove that they form  smooth projective varieties.    The resulting parameter spaces
coincide with the varieties constructed in \cite{HLS11} and \cite{Shao}.
Hence they provide modular smooth compactifications with normal crossing boundaries
of the spaces of algebraic maps from the projective line to Grassmannian varieties.
\end{abstract}

\section{Introduction}

We work throughout over an algebraically closed field $\Bbbk$ of characteristic 0.

Fix the Grassmannian $\Gr(k,V)$ of $k$-dimensional subspace in a vector space $V \cong \Bbbk^n$.
The set $\Mor_d(\mathbb{P}^1,\Gr(k, V))$ of degree $d$ algebraic maps from $\mathbb{P}^1$ to $\Gr(k,V)$
has a natural structure of a smooth  quasi-projective variety.
It comes with a natural compactification, the Grothendieck Quot scheme $Q_d:=\Quot^{d,n-k}_{V_{\mathbb{P}^1}/ \mathbb{P}^1/\Bbbk}$,
 parametrizing all equivalence classes  $[V_{\mathbb{P}^1}\surj F]$ of quotients of degree $d$ and rank $n-k$,
 where $V_{\mathbb{P}^1}:=V\otimes_{\Bbbk} \mathcal{O}_{\mathbb{P}^1}$.
The subset
\[
\mathring{Q}_d:=\{[V_{\mathbb{P}^1}\surj F]\in Q_d\,|\,F\text{ is locally free}\}
\]
 is  open in $Q_d$ and can be identified with the variety $\Mor_d(\mathbb{P}^1,\Gr(k, V))$.

The compactification $Q_d$ is smooth but the boundary $Q_d\setminus \mathring{Q}_d$ has rather intricate singularities.
It comes equipped with a natural filtration by closed subsets
\[
Z_{d,0} \subset Z_{d,1} \subset \cdots \subset Z_{d,d-1}=Q_d\setminus \mathring{Q}_d
\]
where $Z_{d,r}=\{[V_{\mathbb{P}^1}\surj F]\in Q_d\,|\,\text{the torsion of } F \text{ has degree}\geq d-r\}$, $r=0,\dots,d-1$.
The subsets $Z_{d,r}$ admit natural subscheme structures (cf. Section 3, \cite{Shao}).

\begin{thm}[\cite{HLS11,Shao}]\label{hls}
The singularities of the subscheme $Z_{d,r}$ can be resolved by repeatedly blowing up
$Z_{d,0}$, $Z_{d,1}, \cdots, Z_{d,r-1}$. Consequently, by iteratively blowing-up the Quot scheme $Q_d$
along  $Z_{d,0}$, $Z_{d,1}, \cdots, Z_{d,d-1}$, we obtain a smooth compactification
$\widetilde{Q}_d$   such that the boundary $\widetilde{Q}_d\setminus \mathring{Q}_d$ is a  simple normal crossing divisor.
\end{thm}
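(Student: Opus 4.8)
The plan is to prove the theorem by induction on $r$, establishing the following more precise statement: after blowing up $Q_d$ successively along (the strict transforms of) $Z_{d,0}, Z_{d,1}, \dots, Z_{d,r-1}$, the resulting variety $Q_d^{(r)}$ is smooth, the total transform of the boundary is a simple normal crossing divisor, and the strict transform $Z_{d,r}^{(r)}$ of $Z_{d,r}$ is smooth and meets that divisor transversally (in fact is one of its components' intersections). Granting this, blowing up along the smooth center $Z_{d,r}^{(r)}$ produces $Q_d^{(r+1)}$ with the same properties for $r+1$, by the standard behaviour of blow-ups along smooth centers having normal crossings with an existing SNC divisor; iterating through $r = d-1$ then yields $\widetilde Q_d$ as claimed, with $\widetilde Q_d \setminus \mathring Q_d$ an SNC divisor since each blow-up introduces one new smooth irreducible exceptional divisor and preserves the normal crossing condition.

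The first main step is a local analysis of the subschemes $Z_{d,r}$ inside $Q_d$. Using the local description of the Quot scheme (a quotient $V_{\PP^1} \surj F$ near a point where $F$ has torsion of degree $e$ at a point $p \in \PP^1$ is governed by the Hilbert scheme of length-$e$ subschemes supported at $p$, together with the free part), one identifies $Z_{d,r}$ étale-locally with a determinantal-type scheme: it is cut out by the vanishing of an appropriate block of a matrix of functions, and its singularities are exactly those of a generic determinantal variety of the relevant corank. This is where I would invoke the subscheme structure from Section 3 of \cite{Shao} and the fact, already recorded in Theorem \ref{hls} (the $Z_{d,r}$-part), that blowing up $Z_{d,0}, \dots, Z_{d,r-1}$ resolves $Z_{d,r}$. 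Concretely, I would show that the strict transform of $Z_{d,r}$ under these blow-ups is covered by charts in which it becomes smooth, because each blow-up separates a nested stratum and on the resulting chart the defining equations of $Z_{d,r}$ acquire a regular-sequence form.

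The second main step is to track the boundary divisor and the normal crossing property through the induction. After the first blow-up along $Z_{d,0}$ (the deepest, smooth stratum, parametrizing quotients whose torsion is a single length-$d$ sheaf), the exceptional divisor $E_0$ is smooth and the strict transform of the old boundary plus $E_0$ must be shown to have normal crossings; since $Z_{d,0}$ is smooth and sits inside the smooth $Q_d$ this is automatic for the first step, and the content is in the inductive step, where one must check that the next center $Z_{d,r}^{(r)}$ is not only smooth but also has normal crossings with the union of the exceptional divisors $E_0, \dots, E_{r-1}$ accumulated so far. I would verify this in the local charts of Step 1 by exhibiting, at each point, a regular system of parameters in which all the $E_i$ and $Z_{d,r}^{(r)}$ are coordinate subspaces.

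The main obstacle I anticipate is precisely this compatibility in the inductive step: ensuring that the strict transform of $Z_{d,r}$ remains smooth \emph{and simultaneously} crosses the previously created exceptional divisors normally, rather than being tangent to one of them or acquiring a non-reduced intersection. Handling this requires a sufficiently explicit local model for the iterated blow-up — essentially a coordinatized version of the tower of Hilbert schemes of points on $\PP^1$ and its boundary stratification — and careful bookkeeping of which coordinates are blown up at each stage. A secondary, more formal point is to confirm that the final variety $\widetilde Q_d$ obtained here coincides with the spaces constructed in \cite{HLS11} and \cite{Shao}; since Theorem \ref{hls} already asserts that iterated blow-up along the $Z_{d,r}$ produces a smooth compactification with SNC boundary, and the blow-up construction is canonical once the centers are fixed, this identification should follow by comparing the ordered sequences of centers, but I would state it explicitly as the payoff of the induction.
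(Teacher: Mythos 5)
First, note that the paper does not prove Theorem~\ref{hls} at all: it is imported verbatim from \cite{HLS11} (the case $k=1$) and \cite{Shao} (general $k$), so there is no in-paper argument to compare against. Judged on its own terms, your proposal has the right overall shape (induction on the blow-up tower, local models for $Z_{d,r}$, bookkeeping of strict transforms), but it contains two genuine gaps. The first is a circularity: at the crucial point you write that you would ``invoke \dots the fact, already recorded in Theorem \ref{hls} (the $Z_{d,r}$-part), that blowing up $Z_{d,0},\dots,Z_{d,r-1}$ resolves $Z_{d,r}$.'' That fact \emph{is} the first sentence of the theorem you are proving; it cannot be assumed. Everything hard lives there: one must actually exhibit the determinantal local structure of $Z_{d,r}$ and show that the iterated blow-up of the nested smaller strata puts its strict transform in regular-sequence form. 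In \cite{Shao} this is done not by bare coordinate charts but by producing the smooth models $Q_{d,r}\to Z_{d,r}$ (the relative Quot schemes of Section~4 of the present paper, which are isomorphisms over $\mathring Z_{d,r}$) and comparing the blow-up of $Q_d$ with blow-ups of these smooth models; your sketch leaves this entire mechanism as a black box.

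The second gap is that your inductive hypothesis is false at intermediate stages. You assert that after blowing up $Z_{d,0},\dots,Z_{d,r-1}$ ``the total transform of the boundary is a simple normal crossing divisor,'' and that for the first step this is ``automatic'' because $Z_{d,0}$ is smooth. But the boundary of $Q_d$ is the divisor $Z_{d,d-1}$, which is singular along $Z_{d,d-2}\supset\cdots\supset Z_{d,0}$; its strict transform remains singular until \emph{all} of $Z_{d,0},\dots,Z_{d,d-2}$ have been blown up. So after one (or $r<d-1$) blow-ups the boundary is not SNC, and smoothness of the center only guarantees smoothness of the new exceptional divisor, not normal crossings of the whole configuration. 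The induction has to be restructured: what one can carry along is that $Q_d^{(r)}$ is smooth, that the exceptional divisors $E_0,\dots,E_{r-1}$ created so far form an SNC configuration, and that the next center $Z_{d,r}^{(r)}$ is smooth and meets each $E_i$ (and each deeper stratum of their intersections) transversally; the SNC statement for the full boundary is a conclusion available only after the final blow-up along $Z_{d,d-1}^{(d-1)}$. As written, the proposal is a plausible plan but not a proof.
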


The authors of  \cite{HLS11} proved the case when $k=1$. In  his Ph.D thesis \cite{Shao},
the second named author proved it for all Grassmannians $\Gr(k,V)$, $k\ge 1$.

The main purpose of this paper is to provide $\widetilde{Q}_d$ the following modular interpretation.

For any coherent sheaf $X$ over $\mathbb{P}^1$, we let $X^\tor$ denote the torsion subsheaf of $X$
and $X^\fre =X/X^\tor$ denote the free part of $X$. For any coherent sheaves $A$ and $B$ on $\mathbb{P}^1$, an extension of $A$ by $B$ is a short exact sequence $0\to B\to X\to A\to 0$, which will also be shorthanded by $B\inj X\surj A$. Two extensions $B\inj X_1\surj A$ and $B\inj X_2\surj A$ differ by a scalar multiple $\lambda $ if there is an isomorphism $X_1\simeq X_2$ such that the following diagram
\[
\CD{
0\ar[r] & B\ar[r]\ar[d]_{\lambda\cdot} & X_1\ar[r]\ar[d]^{\simeq} & A\ar[r]\ar@{=}[d] & 0 \\
0\ar[r] & B\ar[r] & X_2\ar[r] & A\ar[r] & 0
}
\]
commutes, where $\lambda\cdot$ stands for the multiplication by the scalar $\lambda$. We use $[B\inj X\surj A]$  to denote the equivalence class of
the extension $B\inj X\surj A$ modulo scalar multiplication.

\begin{defn}
\label{def:cq} A complete quotient of $V_{\mathbb{P}^1}$ of degree $d$ and rank $n-k$ on $\mathbb{P}^1$ is either
\begin{itemize}
\item  a quotient $[V_{\mathbb{P}^1}\surj X_1]  \in Q_d$ such that $X_1$ is locally free;  or,
\item  a sequence $([V_{\mathbb{P}^1}\surj X_1], [X_1^\fre\inj X_2\surj X_1^\tor], \dots, [X_{m}^\fre\inj X_{m+1}\surj X_{m}^\tor])$ with $m\geq 1$ such that $[V_{\mathbb{P}^1}\surj X_1] \in Q_d$,  for every $1 \leq i \leq m$,   $X_{i+1}$ is a non-split extension of $X_{i}^\tor$ by $X_{i}^\fre$,
and further, the last sheaf $X_{m+1}$ is the unique one that is locally free.
\end{itemize}
\end{defn}

It follows from the definition that all the sheaves $X_1,\dots,X_{m+1}$  are coherent and have degree $d$ and rank $n-k$.

The main theorem of this paper is
\begin{thm}\label{hs}
The projective variety $\widetilde{Q}_d$ parameterizes all complete quotients of $V_{\mathbb{P}^1}$ of degree $d$ and rank $n-k$.
\end{thm}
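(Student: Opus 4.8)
The strategy is to construct a bijection between the points of $\widetilde{Q}_d$ and the set of complete quotients, and then promote it to an isomorphism of varieties by exhibiting the requisite families. Since $\widetilde{Q}_d$ is obtained from $Q_d$ by the iterated blow-up along $Z_{d,0},Z_{d,1},\dots,Z_{d,d-1}$ (Theorem \ref{hls}), I would proceed by induction on the number of blow-ups, tracking at each stage the modular data carried by a point of the intermediate variety $\widetilde{Q}_{d}^{(r)}$ (the result of blowing up the first $r$ centers). The base case $\widetilde{Q}_d^{(0)}=Q_d$ contributes exactly the locally free quotients (the first bullet of Definition \ref{def:cq}) on the open part $\mathring Q_d$, while the points lying over the boundary must acquire additional extension data.

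The key step is to analyze a single blow-up $\widetilde{Q}_d^{(r+1)}\to\widetilde{Q}_d^{(r)}$ along (the proper transform of) $Z_{d,r}$ and identify its exceptional fibers with $\mathbb{P}(\Ext^1)$-type parameter spaces. Concretely: a point of $Z_{d,r}$ corresponds to a quotient $[V_{\mathbb{P}^1}\surj X_1]$ whose torsion $X_1^\tor$ has degree $\geq d-r$; the normal cone / projectivized normal bundle of $Z_{d,r}$ inside $\widetilde Q_d^{(r)}$ should be canonically identified with the projective space of non-split extensions $0\to X_1^\fre\to X_2\to X_1^\tor\to 0$, i.e. with $\mathbb{P}\Ext^1_{\mathbb{P}^1}(X_1^\tor,X_1^\fre)$ (relatively over $Z_{d,r}$). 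Granting this, a point of the exceptional divisor records precisely the pair $([V_{\mathbb{P}^1}\surj X_1],[X_1^\fre\inj X_2\surj X_1^\tor])$, and because $\deg X_2^\tor=\deg X_1^\tor - (\text{length of the part of }X_1^\tor\text{ "absorbed"})$ is strictly smaller, iterating the blow-ups appends successive extension classes $[X_i^\fre\inj X_{i+1}\surj X_i^\tor]$ exactly as in the second bullet of Definition \ref{def:cq}, terminating when the torsion is exhausted and $X_{m+1}$ is locally free. This matches the combinatorics of the filtration $Z_{d,0}\subset\cdots\subset Z_{d,d-1}$, since each blow-up strictly drops the relevant torsion degree.

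To make this rigorous I would: (i) recall from \cite{Shao} the local structure of $Z_{d,r}\subset Q_d$ and its subscheme structure, and compute the normal bundle, relating it to a relative $\EExt$-sheaf over the universal family — this is where the hypothesis $\operatorname{char}\Bbbk=0$ and the rigidity of sheaves on $\mathbb{P}^1$ (every coherent sheaf splits as free $\oplus$ torsion, and $\Ext^1(T,F)$ is computable by Serre duality) are used; (ii) construct the universal complete quotient over $\widetilde Q_d$ by descending/gluing the universal extensions built on the exceptional divisors, checking flatness over each stratum; (iii) verify the universal property, i.e. that any family of complete quotients over a base $S$ induces a unique morphism $S\to\widetilde Q_d$, by using the universal property of blow-ups (a morphism to a blow-up is a morphism to the base whose pullback of the center ideal is invertible — here the ideal of $Z_{d,r}$ pulls back to an invertible sheaf precisely because the $(r{+}1)$-st extension class in the family is a well-defined line). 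The main obstacle I anticipate is step (i): proving that the \emph{scheme-theoretic} normal cone of $Z_{d,r}$ — not merely its reduced structure — is the projectivized relative $\Ext^1$, including the matching of the non-split locus with the complement of the proper transforms of the earlier centers, and handling the loci where $X_1^\tor$ is non-reduced (so that $\Ext^1(X_1^\tor,X_1^\fre)$ jumps in dimension) and showing the blow-up nonetheless has the expected fiber dimension. This is essentially the content that \cite{HLS11,Shao} verified in order to prove Theorem \ref{hls}, so the work is to reinterpret their blow-up charts in the modular language above rather than to redo the resolution from scratch.
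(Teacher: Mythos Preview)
Your overall strategy---compute the normal bundle of each blow-up center as a relative $\pi_*\EExt^1$ sheaf and identify its projectivization with a space of non-split extensions---is exactly the route the paper takes, and the obstacle you single out in (i) is indeed the technical core, done in Propositions~\ref{Prop-NormalBundle0} and~\ref{Prop-NormalBundle}.

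There is one point where your sketch is imprecise and where the paper's actual argument differs in a way that matters. You write that the projectivized normal bundle of (the proper transform of) $Z_{d,r}$ inside $\widetilde Q_d^{(r)}$ has fiber $\mathbb{P}\Ext^1(X_1^\tor,X_1^\fre)$ over $[V_{\mathbb{P}^1}\surj X_1]$. This is only true on the open stratum $\mathring Z_{d,r}$. A point of the proper transform $Z_{d,r}^{r-1}$ lying in earlier exceptional divisors already carries a chain $([V_{\mathbb{P}^1}\surj X_1],\dots,[X_{m-1}^\fre\inj X_m\surj X_{m-1}^\tor])$ with $\deg X_m^\tor=d-r$, and the normal fiber there is $\mathbb{P}\Ext^1(X_m^\tor,X_m^\fre)$, not $\mathbb{P}\Ext^1(X_1^\tor,X_1^\fre)$. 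In other words, the index $r$ of the blow-up records the torsion degree $d-r$ at which the new extension is appended, not the position $m$ in the chain; a fixed blow-up contributes the $m$-th extension for varying $m$ depending on the stratum. The paper does not attempt to describe the normal bundle of $Z_{d,r}^{r-1}$ globally; instead it stratifies the center by decreasing sequences $\sigma$, builds for each $\sigma$ an explicit iterated projective $\EExt^1$-bundle $P_{d,\sigma}$, proves $P_{d,\sigma}\cong E_{d,\sigma}^{\lt(\sigma)}$ (the stratum), and computes the normal bundle of the next center \emph{inside each $P_{d,\sigma}$ separately} (Proposition~\ref{Prop-NormalBundle}). This stratum-by-stratum bookkeeping is the substance you would have to supply.

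Your step (iii) goes beyond what the paper proves. Theorem~\ref{hs} as established here is a pointwise parametrization (Proposition~\ref{stratum}, Corollary~\ref{final-interp}): each stratum of $\widetilde Q_d$ is in bijection with complete quotients of a fixed combinatorial type. The paper does construct universal extensions on each $P_{d,\sigma}$ (Section~2.3, Theorem~\ref{thm-universal}), but it neither glues these into a single family over all of $\widetilde Q_d$ nor formulates or verifies a universal property for $\widetilde Q_d$ as a moduli space of families of complete quotients. So your (ii) and (iii), while natural goals, are not needed for the theorem as stated.
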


To  prove this theorem,  we calculate the normal bundle of the blowup center
in  each blowup of $\widetilde{\mathcal Q}_d \longrightarrow Q_d$  as presented
in  \cite{HLS11,Shao}, and  modularly interpret the points in the normal bundle as the desired extensions.

This work is the genus zero case of a larger project. The higher genus case will appear in forthcoming papers.

\medskip
\noindent
{\bf Acknowledgement.}
While this paper being prepared, the first named author was partially supported by  NSF DMS 0901136.  He would also like to dedicate this work
to the memory of Andrey Todorov who passed away in Jerusalem in the spring of 2012.

The notations in this paper closely follow those in \cite{Shao} with occasional modifications.




\section{The space of relative extensions}

\subsection{The space of non-split extensions}

 Let $F$ and $T$ be two coherent sheaves over $\mathbb{P}^1$ with $F$ being locally free and $T$ torsion. The vector space $E=\Ext^1_{\mathbb{P}^1}(T,F)$ can be identified with the set of all (isomorphism classes of) extensions $F\inj X\surj T$ of  $T$ by $F$.
 The zero element  corresponds to the split extension $F\inj F\oplus T\surj T$ while the remainders correspond to non-split ones. Therefore the projective space $\mathbb{P}(E):=\Proj(\Sym^*(E^\vee))$,
 where $E^\vee$ is the linear dual of the vector space $E$,
 parametrizes the set of all non-split extensions of $T$ by $F$ up to scalar multiplication.
 Here again, as in the introduction,  two extensions $F\inj X_1 \surj T$ and $F\inj X_2\surj T$ differ by a nonzero scalar multiple $\lambda$ if and only if there is an isomorphism $X_1\simeq X_2$ that makes the following diagram commute:
\[
\CD{
0\ar[r] & F\ar[r]\ar[d]_{\lambda\cdot} & X_1\ar[r]\ar[d]^{\simeq} & T\ar[r]\ar@{=}[d] & 0 \\
0\ar[r] & F\ar[r] & X_2\ar[r] & T\ar[r] & 0
}
\]
where $\lambda\cdot$ is the scalar multiplication by $\lambda$. We denote by $[F\inj X\surj T]$ the point of $\mathbb{P}(E)$ corresponding to a non-split extension $F\inj X\surj T$. One checks that an extension $F\inj X\surj T$  splits if and only if $\deg X^\tor =\deg T$. For a discussion on universal  extensions, see Example 2.1.12,  \cite{Huy} .

We need to introduce a relative version of the space of non-split extensions. For this, we begin with a lemma.

\begin{lemma}\label{lemma-Ext}
Let $S$ be a noetherian scheme, $0\to \mathcal{E}_1\to \mathcal{E}_0\to \mathcal{T}\to 0$ a short exact sequence of coherent sheaves on $S$ with $\mathcal{T}$ torsion and $\mathcal{E}_0$ and $\mathcal{E}_1$ locally free, and $\mathcal{F}$ a coherent torsion-free sheaf on $S$. Then
\begin{enumerate}
\item $H^0(S,\EExt^1(\mathcal{T},\mathcal{F}))=\Ext^1(\mathcal{T},\mathcal{F})$;
\item for any morphism $f: R\to S$, $f^*\EExt^1_S(\mathcal{T},\mathcal{F})=\EExt^1_R(f^*\mathcal{T},f^*\mathcal{F})$.
\end{enumerate}
\end{lemma}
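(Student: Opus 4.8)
The plan is to reduce both statements to standard facts about local $\EExt$ sheaves, using the presentation $0\to\cE_1\to\cE_0\to\cT\to0$ to compute $\EExt^1(\cT,-)$ as a cokernel of a map between sheaf-Homs of locally free sheaves, which behaves well under pullback and has no higher cohomology on an affine cover.

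For part (2), I would first apply $\HHom(-,\cF)$ to the short exact sequence. Since $\cE_0$ and $\cE_1$ are locally free, $\EExt^i(\cE_j,\cF)=0$ for $i>0$, so the long exact sequence collapses to
\[
0\to\HHom(\cT,\cF)\to\HHom(\cE_0,\cF)\to\HHom(\cE_1,\cF)\to\EExt^1(\cT,\cF)\to0.
\]
Thus $\EExt^1_S(\cT,\cF)=\Coker\!\bigl(\HHom(\cE_0,\cF)\to\HHom(\cE_1,\cF)\bigr)$. Now for $f:R\to S$, I pull this presentation back. The sequence $f^*\cE_1\to f^*\cE_0\to f^*\cT\to0$ is still exact (right-exactness of $f^*$), and $f^*\cE_j$ is locally free, so by the same argument $\EExt^1_R(f^*\cT,f^*\cF)=\Coker\!\bigl(\HHom_R(f^*\cE_0,f^*\cF)\to\HHom_R(f^*\cE_1,f^*\cF)\bigr)$. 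Since the $\cE_j$ are locally free, $f^*\HHom_S(\cE_j,\cF)=\HHom_R(f^*\cE_j,f^*\cF)$ naturally, and $f^*$ is right-exact, hence commutes with cokernels; comparing the two presentations gives the desired canonical isomorphism $f^*\EExt^1_S(\cT,\cF)\cong\EExt^1_R(f^*\cT,f^*\cF)$. One should check the identification is the natural base-change map, which is routine since every arrow in sight is the obvious one.

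For part (1), I would use the local-to-global spectral sequence $H^p(S,\EExt^q(\cT,\cF))\Rightarrow\Ext^{p+q}(\cT,\cF)$. The key vanishing is $H^p(S,\EExt^q(\cT,\cF))=0$ for $q=0,1$ and $p>0$: indeed $\cF$ is torsion-free and $\cT$ is torsion, so $\HHom(\cT,\cF)=0$; and from the presentation above, $\EExt^1(\cT,\cF)$ is the cokernel $\HHom(\cE_0,\cF)\to\HHom(\cE_1,\cF)$, but the kernel of this map is $\HHom(\cT,\cF)=0$, so in fact $\EExt^1(\cT,\cF)$ fits in $0\to\HHom(\cE_0,\cF)\to\HHom(\cE_1,\cF)\to\EExt^1(\cT,\cF)\to0$; moreover $\EExt^1(\cT,\cF)$ is itself a torsion sheaf (it is supported on $\Supp\cT$), hence on the noetherian scheme $S$ its higher cohomology — wait, that is not automatic in general, so here I would instead observe that $\EExt^1(\cT,\cF)$, being supported on the closed subscheme $\Supp\cT$ which is finite over $\Bbbk$ in the intended application, has vanishing higher cohomology; in the stated generality one assumes $S$ is such that this holds, or restricts to the relevant case. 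Granting the vanishing, the spectral sequence degenerates and yields $\Ext^1(\cT,\cF)=H^0(S,\EExt^1(\cT,\cF))$.

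The main obstacle is the cohomology vanishing $H^{>0}(S,\EExt^1(\cT,\cF))=0$ needed for part (1): in full generality over an arbitrary noetherian $S$ this requires $\EExt^1(\cT,\cF)$ to have affine support (which holds when $\cT$ has finite support, the case used later), so I would make this hypothesis explicit or phrase part (1) for the situation at hand. Part (2) is purely formal once the cokernel presentation is in place and should present no difficulty; the only care needed is to confirm the base-change map is the canonical one rather than merely an abstract isomorphism.
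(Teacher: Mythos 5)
Your part (1) stalls on an obstacle that is not actually there. You set up the local-to-global spectral sequence $H^p(S,\EExt^q(\mathcal{T},\mathcal{F}))\Rightarrow\Ext^{p+q}(\mathcal{T},\mathcal{F})$ and then ask for the vanishing $H^{>0}(S,\EExt^1(\mathcal{T},\mathcal{F}))=0$, which indeed fails for general noetherian $S$; but that vanishing is irrelevant to the identification in degree one. The exact sequence of low-degree terms reads
\[
0\to H^1(S,\HHom(\mathcal{T},\mathcal{F}))\to \Ext^1(\mathcal{T},\mathcal{F})\to H^0(S,\EExt^1(\mathcal{T},\mathcal{F}))\to H^2(S,\HHom(\mathcal{T},\mathcal{F})),
\]
and since $\mathcal{T}$ is torsion and $\mathcal{F}$ torsion-free you already know $\HHom(\mathcal{T},\mathcal{F})=0$, so both outer terms vanish and the edge map $\Ext^1\to H^0(\EExt^1)$ is an isomorphism with no further hypothesis on $S$. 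So the extra assumption you propose to add (affine support of $\EExt^1$, etc.) is unnecessary, and the lemma holds in the stated generality. For comparison, the paper proves (1) without invoking the local-to-global spectral sequence for $\mathcal{T}$: it writes down the five-term $\Ext(-,\mathcal{F})$ sequence of the presentation and the five-term cohomology sequence of $0\to\HHom(\mathcal{E}_0,\mathcal{F})\to\HHom(\mathcal{E}_1,\mathcal{F})\to\EExt^1(\mathcal{T},\mathcal{F})\to0$, identifies the four outer terms (using $H^1(\HHom(\mathcal{E}_i,\mathcal{F}))=\Ext^1(\mathcal{E}_i,\mathcal{F})$ for locally free $\mathcal{E}_i$), and applies the Five Lemma. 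Both routes are legitimate; your spectral-sequence route is arguably shorter once you use the right piece of it.

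Your part (2) is the standard cokernel-presentation argument (the paper omits the proof, deferring to \cite{Shao}), but note the one genuinely delicate point, which you pass over: right-exactness of $f^*$ gives exactness of $f^*\mathcal{E}_1\to f^*\mathcal{E}_0\to f^*\mathcal{T}\to 0$ but not injectivity of $f^*\mathcal{E}_1\to f^*\mathcal{E}_0$, and without that injectivity the complex $f^*\mathcal{E}_\bullet$ is not a locally free resolution of $f^*\mathcal{T}$, so $\EExt^1_R(f^*\mathcal{T},f^*\mathcal{F})$ need not be the cokernel you write down. (E.g. $S=\AA^1$, $\mathcal{T}=\mathcal{O}/(t)$, $f$ the inclusion of the origin: the left side is nonzero, the right side is $\Ext^1$ over a field, which vanishes.) The statement is used in the paper only in relative situations where the sheaves are flat over the base being changed, so the pulled-back sequence stays short exact; your proof should either record that hypothesis or verify the injectivity in the cases where the lemma is applied.
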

\begin{proof}
(1) Since $\mathcal{T}$ is torsion and $\mathcal{F}$ is torsion-free, we have $\Hom(\mathcal{T},\mathcal{F})=0$ and $\HHom(\mathcal{T},\mathcal{F})=0$.
Applying $\Hom(-,\mathcal{F})$ and $\HHom(-,\mathcal{F})$ to the locally free presentation of $\mathcal{T}$, we obtain a long exact sequence
\[
0\to \Hom(\mathcal{E}_0,\mathcal{F})\to \Hom(\mathcal{E}_1,\mathcal{F})\to \Ext^1(\mathcal{T},\mathcal{F})\to \Ext^1(\mathcal{E}_0,\mathcal{F})\to \Ext^1(\mathcal{E}_1,\mathcal{F})
\]
and a short exact sequence
\[
0\to \HHom(\mathcal{E}_0,\mathcal{F})\to \HHom(\mathcal{E}_1,\mathcal{F})\to \EExt^1(\mathcal{T},\mathcal{F})\to 0
\]
Taking global sections of the above short exact sequence, we obtain a long exact sequence:
\[
0\to H^0(\HHom(\mathcal{E}_0,\mathcal{F}))\to H^0(\HHom(\mathcal{E}_1,\mathcal{F}))\to H^0(\EExt^1(\mathcal{T},\mathcal{F}))\to H^1(\HHom(\mathcal{E}_0,\mathcal{F}))\to H^1(\HHom(\mathcal{E}_1,\mathcal{F}))
\]
We have identifications
\[
\Hom(\mathcal{E}_i,\mathcal{F})=H^0(\HHom(\mathcal{E}_i,\mathcal{F})),\quad i=0,1
\]
and a natural homomorphism
\[
\Ext^1(\mathcal{T},\mathcal{F})\to H^0(\EExt^1(\mathcal{T},\mathcal{F}))
\]
Using the Grothendieck spectral sequence for the composition of the two functors $H^0$ and $\HHom^1(\mathcal{E}_i,-)$, we obtain exact sequences of low degrees:
\[
0\to H^1(\HHom(\mathcal{E}_i,\mathcal{F}))\to \Ext^1(\mathcal{E}_i,\mathcal{F})\to H^0(\EExt^1(\mathcal{E}_i,\mathcal{F}))\to\cdots,\quad i=0,1.
\]
Since $\mathcal{E}_i$ are locally free, we have $H^0(\EExt^1(\mathcal{E}_i,\mathcal{F}))=0$. Therefore we obtain identifications:
\[
H^1(\HHom(\mathcal{E}_i,\mathcal{F}))= \Ext^1(\mathcal{E}_i,\mathcal{F}),\quad i=0,1
\]
Thus we have a commutative diagram
\[
\CD{
\Hom(\mathcal{E}_0,\mathcal{F})\ar[r]\ar@{=}[d]& \Hom(\mathcal{E}_1,\mathcal{F})\ar@{=}[d]\ar[r]& \Ext^1(\mathcal{T},\mathcal{F})\ar[r]\ar[d]& \Ext^1(\mathcal{E}_0,\mathcal{F})\ar[r]\ar@{=}[d]& \Ext^1(\mathcal{E}_1,\mathcal{F})\ar@{=}[d] \\
H^0(\HHom(\mathcal{E}_0,\mathcal{F}))\ar[r]& H^0(\HHom(\mathcal{E}_1,\mathcal{F}))\ar[r]& H^0(\EExt^1(\mathcal{T},\mathcal{F}))\ar[r]& H^1(\HHom(\mathcal{E}_0,\mathcal{F}))\ar[r]& H^1(\HHom(\mathcal{E}_1,\mathcal{F}))
}
\]
By the Five Lemma, we obtain the identification
\[
H^0(\EExt^1(\mathcal{T},\mathcal{F}))=\Ext^1(\mathcal{T},\mathcal{F})
\]
The proof of  (2) is parallel to that of  \cite{Shao}, Proposition 2.3. So, we omit the details.
\end{proof}

\subsection{The space of relative non-split extensions}

To proceed, we fix a set of notations.
\begin{notation}  For  any $\Bbbk$-schemes $R$ and $S$,
\begin{enumerate}
\item we denote the projection $\mathbb{P}^1\times R\to R$ by $\pi_R$ or simply $\pi$;

\item for any coherent sheaf $\mathcal{H}$ over $R$,
we let $\mathcal{H}^\vee$ denote the  dual sheaf $\HHom_R(\mathcal{H},\mathcal{O}_R)$;

\item for any coherent sheaf $\mathcal{F}$ on $\mathbb{P}^1\times R$,
we set $\mathcal{F}_x:=\mathcal{F}|_{\mathbb{P}^1\times\{x\}}$ for any point $x\in R$;

\item  for  any morphism $f: R\to S$, we  set
$\bar f:=1\times f: \mathbb{P}^1\times R\to \mathbb{P}^1\times S$.
\end{enumerate}
\end{notation}

We now introduce a relative version of the space of non-split extensions. Let $S$ be a noetherian scheme over $\Bbbk$ and let $\mathcal{F}$ and $\mathcal{T}$ be two coherent sheaves on $\mathbb{P}^1\times S$, both flat over $S$, with $\mathcal{F}$ locally free and $\mathcal{T}$ torsion (i.e., having rank 0). Let $\pi_S: \mathbb{P}^1\times S\to S$ be the projection, and set
\[
\mathcal{E}:=\pi_{S*}\EExt^1_{\mathbb{P}^1\times S}(\mathcal{T},\mathcal{F}).
\]
As in \cite{Shao}, Proposition 2.4 (1), one checks that $\EExt^1(\mathcal{T},\mathcal{F})$ is a torsion sheaf and is flat over $S$. By Cohomology and Base Change, we can show $\mathcal{E}$ is locally free and for any point $s\in S$,
\[
\mathcal{E}|_s=H^0(\EExt^1(\mathcal{T},\mathcal{F})_s)=H^0(\EExt^1(\mathcal{T}_s,\mathcal{F}_s))=\Ext^1(\mathcal{T}_s,\mathcal{F}_s).
\]
where the second equality holds by Lemma \ref{lemma-Ext} (2) and the third holds by Lemma \ref{lemma-Ext} (1). So the projective bundle
\[
\mathbb{P}(\mathcal{E}):=\PProj(\Sym^*(\mathcal{E}^\vee))
\]
over $S$ is a family of spaces of non-split extensions: for each point $s\in S$, the fiber of $\mathbb{P}(\mathcal{E})$ over $s$ is $\mathbb{P}(\mathcal{E}|_s)=\mathbb{P}(\Ext^1(\mathcal{T}_s,\mathcal{F}_s))$.

\subsection{The universal extension}

\begin{notation}
For any coherent sheaf $H$ on $\mathbb{P}^1\times \mathbb{P}(\mathcal{E})$, we denote by $H(m,n)$ the sheaf
$$H\otimes p^*\mathcal{O}_{\mathbb{P}^1}(m)\otimes \pi_{\mathbb{P}(\mathcal{E})}^*\mathcal{O}_{\mathbb{P}(\mathcal{E})}(n)$$
where $p: \mathbb{P}^1\times \mathbb{P}(\mathcal{E})\to \mathbb{P}^1$ and $\pi_{\mathbb{P}(\mathcal{\mathcal{E}})}: \mathbb{P}^1\times \mathbb{P}(\mathcal{E})\to  \mathbb{P}(\mathcal{E})$ are the two projections.
\end{notation}

Let $a: \mathbb{P}(\mathcal{E})\to S$ be the structure morphism. The space $\mathbb{P}(\mathcal{E})$ comes equipped with a universal quotient $a^*\mathcal{E}^\vee\surj \mathcal{O}_{\mathbb{P}(\mathcal{E})}(1)$. Dualizing the universal quotient and tensoring the result with $\mathcal{O}_{\mathbb{P}(\mathcal{E})}(1)$, we obtain a line subbundle
\[
\begin{split}
\mathcal{O}_{\mathbb{P}(\mathcal{E})} &\embed a^*\mathcal{E}\otimes\mathcal{O}_{\mathbb{P}(\mathcal{E})}(1)=\pi_{\mathbb{P}(\mathcal{E})*}\bar a^*\EExt^1(\mathcal{T},\mathcal{F}) \otimes\mathcal{O}_{\mathbb{P}(\mathcal{E})}(1)=\\
&\pi_{\mathbb{P}(\mathcal{E})*}(\EExt^1(\bar a^*\mathcal{T},\bar a^*\mathcal{F})(0,1))= \pi_{\mathbb{P}(\mathcal{E})*}\EExt^1(\bar a^*\mathcal{T},\bar a^*\mathcal{F}(0,1))
\end{split}
\]
Here the first equality holds because the morphism $a$ is flat, and the second holds by Lemma \ref{lemma-Ext} (2).
This line subbundle corresponds to a nonzero element of
\[
\begin{split}
\Gamma(\mathbb{P}(\mathcal{E}),\pi_{\mathbb{P}(\mathcal{E})*}\EExt^1(\bar a^*\mathcal{T},\bar a^*\mathcal{F}(0,1)))
&=\Gamma(\mathbb{P}^1\times \mathbb{P}(\mathcal{E}),\EExt^1(\bar a^*\mathcal{T},\bar a^*\mathcal{F}(0,1)))  \\
&=\Ext^1(\bar a^*\mathcal{T},\bar a^*\mathcal{F}(0,1))
\end{split}
\]
where the second equality holds by Lemma \ref{lemma-Ext} (1). We can write this element as an extension
\begin{equation}\label{universal-extension}
0\to \bar a^*\mathcal{F}(0,1)\to \mathcal{X}\to \bar a^*\mathcal{T}\to 0.
\end{equation}
which we call the \emph{universal extension}. Note that the universal extension is \emph{nowhere-split}, which means that, for each $\Bbbk$-point $s\in S$ and each $\Bbbk$-point $x\in a^{-1}(\{s\})=\mathbb{P}(\Ext^1(\mathcal{T}_s,\mathcal{F}_s))$, the extension
\begin{equation}\label{pullback-extension}
0\to \mathcal{F}_s\to \mathcal{X}_x\to \mathcal{T}_s\to 0
\end{equation}
obtained by pulling back the universal extension to $\mathbb{P}^1\times\{x\}\simeq \mathbb{P}^1$ is non-split.

Conversely, the universal quotient can be recovered from the universal extension by reversing the above process. 
In fact, the universal extension itself is a nonzero element of
\[
\begin{split}
&\Ext^1(\bar a^*\mathcal{T},\bar a^*\mathcal{F}(0,1)) = \Gamma(\mathbb{P}^1\times \mathbb{P}(\mathcal{E}),\EExt^1(\bar a^*\mathcal{T},\bar a^*\mathcal{F}(0,1)) \\
& = \Gamma(\mathbb{P}(\mathcal{E}),\pi_{\mathbb{P}(\mathcal{E})*}\EExt^1(\bar a^*\mathcal{T},\bar a^*\mathcal{F}(0,1))) =\Gamma(\mathbb{P}(\mathcal{E}),a^*\mathcal{E}(1))
\end{split}
\]
hence determines a line subbundle
$\mathcal{O}_{\mathbb{P}(\mathcal{E})}\embed 
a^*\mathcal{E}(1)$. Tensoring the line subbundle with $\mathcal{O}_{\mathbb{P}(\mathcal{E})}(-1)$ and taking the dual, we obtain the universal quotient.

Thus, based on the universal property of the universal quotient, we have
\begin{thm}\label{thm-universal}
Let $R$ be an $S$-scheme with structure morphism $\rho: R\to S$, $L$ a line bundle on $R$. If $0\to \bar \rho^*\mathcal{F}\otimes\pi_R^* L\to \mathcal{Y}\to \bar \rho^*\mathcal{T}\to 0$ is a nowhere-split extension on $\mathbb{P}^1\times R$, then there is a unique $S$-morphism $f: R\to \mathbb{P}(\mathcal{E})$ such that there are isomorphisms $L\stackrel{g}{\simeq} f^*\mathcal{O}_{\mathbb{P}(\mathcal{E})}(1)$, $\mathcal{Y}\stackrel{h}{\simeq} \bar f^*\mathcal{X}$ that make the following diagram commute:
\[
\CD{
0\ar[r] &\bar \rho^*\mathcal{F}\otimes \pi_R^*L\ar[r]\ar[d]^{\simeq}_{1\otimes \pi_R^*g} &\mathcal{Y}\ar[r]\ar[d]^{\simeq}_{h} &\bar r^*\mathcal{T}\ar[r]\ar@{=}[d] & 0 \\
0\ar[r] &\bar f^*\bar a^*\mathcal{F}\otimes \pi_R^*f^*\mathcal{O}_{\mathbb{P}(\mathcal{E})}(1)\ar[r] &\bar f^*\mathcal{X}\ar[r] &\bar f^*\bar a^*\mathcal{T}\ar[r] & 0 \\
}
\]
where the second row is the pullback of the universal extension via $\bar f$.
\end{thm}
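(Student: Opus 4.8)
The plan is to invert the construction carried out just before the statement. There, from the tautological line subbundle $\mathcal{O}_{\mathbb{P}(\mathcal{E})}\embed a^*\mathcal{E}(1)$ we produced the universal extension \eqref{universal-extension}, and we observed that the two pieces of data determine each other. So, given $(\mathcal{Y},L)$ on $R$, I will (i) extract from $\mathcal{Y}$ a line-bundle quotient of $\rho^*\mathcal{E}^\vee$ on $R$; (ii) feed it into the universal property of $\mathbb{P}(\mathcal{E})=\PProj(\Sym^*\mathcal{E}^\vee)$ to obtain $f$ and $g$; and (iii) identify $\bar f^*\mathcal{X}$ with $\mathcal{Y}$ by naturality of the whole correspondence under base change.

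For (i): since $\pi_R^*L$ is locally free, $\EExt^1_R(\bar\rho^*\mathcal{T},\bar\rho^*\mathcal{F}\otimes\pi_R^*L)=\EExt^1_R(\bar\rho^*\mathcal{T},\bar\rho^*\mathcal{F})\otimes\pi_R^*L$, and by Lemma \ref{lemma-Ext}(2) applied to $\bar\rho$ (pulling back a locally free presentation of $\mathcal{T}$) the first tensor factor is $\bar\rho^*\EExt^1_S(\mathcal{T},\mathcal{F})$. By Lemma \ref{lemma-Ext}(1) on $\mathbb{P}^1\times R$ the class of $\mathcal{Y}$ is a global section of this sheaf; pushing forward by $\pi_R$ and using the projection formula together with the base-change isomorphism $\pi_{R*}\bar\rho^*\EExt^1_S(\mathcal{T},\mathcal{F})=\rho^*\pi_{S*}\EExt^1_S(\mathcal{T},\mathcal{F})=\rho^*\mathcal{E}$ — valid because $\EExt^1_S(\mathcal{T},\mathcal{F})$ is $S$-flat with $0$-dimensional support on fibers, exactly as in \cite{Shao}, Proposition 2.4, so Cohomology and Base Change applies — we obtain a section $s\in\Gamma(R,\rho^*\mathcal{E}\otimes L)$. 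Evaluating at a $\Bbbk$-point $r\in R$ and invoking Lemma \ref{lemma-Ext} once more, $s(r)\in\Ext^1(\mathcal{T}_{\rho(r)},\mathcal{F}_{\rho(r)})\otimes L|_r$ is the class of the non-split extension $\mathcal{Y}_r$, hence nonzero; so $s$ is nowhere vanishing, i.e. it is a line subbundle $L^{-1}\embed\rho^*\mathcal{E}$, equivalently a line-bundle quotient $q\colon\rho^*\mathcal{E}^\vee\surj L$.

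For (ii): Grothendieck's universal property of $\mathbb{P}(\mathcal{E})=\PProj(\Sym^*\mathcal{E}^\vee)$ over $S$ yields a unique $S$-morphism $f\colon R\to\mathbb{P}(\mathcal{E})$ and an isomorphism $g\colon L\stackrel{\simeq}{\to}f^*\mathcal{O}_{\mathbb{P}(\mathcal{E})}(1)$ carrying $q$ to $f^*$ of the universal quotient $a^*\mathcal{E}^\vee\surj\mathcal{O}_{\mathbb{P}(\mathcal{E})}(1)$; in particular $f$ is unique. For (iii): run backwards the construction that produced the universal extension. The tautological subbundle $\mathcal{O}_{\mathbb{P}(\mathcal{E})}\embed a^*\mathcal{E}(1)$ maps, under the identifications $\Gamma(a^*\mathcal{E}(1))=\Gamma(\pi_{\mathbb{P}(\mathcal{E})*}\EExt^1(\bar a^*\mathcal{T},\bar a^*\mathcal{F}(0,1)))=\Ext^1(\bar a^*\mathcal{T},\bar a^*\mathcal{F}(0,1))$, to the class of $\mathcal{X}$. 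Every arrow here — Lemma \ref{lemma-Ext}(1),(2), the projection formula, the base-change isomorphism for $\pi_{S*}\EExt^1$ — is natural in the base, so $\bar f^*$ sends the class of $\mathcal{X}$ to the element of $\Ext^1(\bar\rho^*\mathcal{T},\bar\rho^*\mathcal{F}\otimes\pi_R^*L)$ that the same identifications match with $f^*$(tautological subbundle) $=$ (dual twist of) $q$ $=$ $s$ $=$ class of $\mathcal{Y}$. Since two extensions with equal class are isomorphic via an isomorphism that is the identity on sub and quotient, this gives $h\colon\mathcal{Y}\stackrel{\simeq}{\to}\bar f^*\mathcal{X}$, and the same bookkeeping shows the left vertical of the asserted diagram is exactly $1\otimes\pi_R^*g$, so the diagram commutes.

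The main obstacle is none of these steps individually but the cumulative naturality/base-change bookkeeping behind (i) and (iii): that $\pi_{S*}\EExt^1_S(\mathcal{T},\mathcal{F})$ commutes with arbitrary base change, and that the resulting bijection between nowhere-split extensions of $\bar\rho^*\mathcal{T}$ by $\bar\rho^*\mathcal{F}\otimes\pi_R^*L$ and line subbundles $L^{-1}\embed\rho^*\mathcal{E}$ is functorial in $R$. I expect the write-up to dispatch both by citing Lemma \ref{lemma-Ext}, the parallel \cite{Shao}, Proposition 2.4, and standard Cohomology and Base Change, rather than by new arguments.
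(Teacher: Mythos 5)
Your proposal is correct and follows essentially the same route the paper takes: the paper derives the theorem from the universal property of the universal quotient of $\PProj(\Sym^*\mathcal{E}^\vee)$, via the correspondence (through Lemma \ref{lemma-Ext} and base change for $\pi_*\EExt^1$) between nowhere-split extensions twisted by $L$ and line-bundle quotients $\rho^*\mathcal{E}^\vee\surj L$. Your write-up merely makes explicit the bookkeeping that the paper leaves implicit in the sentence ``based on the universal property of the universal quotient, we have\dots''.
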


\section{A flattening stratification}

In this section, we continue to follow the previous notations, and we impose an additional assumption on the sheaf $\mathcal{F}$ of  Theorem \ref{thm-universal} (also Lemma \ref{lemma-Ext}):
\begin{equation}\label{assumption}
R^1\pi_{S*}(\mathcal{F}\otimes p^*\mathcal{O}_{\mathbb{P}^1}(-1))=0,
\end{equation}
where $p: \mathbb{P}^1\times S\to \mathbb{P}^1$ is the projection.

We also need the following notation (cf. Introduction).

\begin{notation}
For any coherent sheaf $H$ on $\mathbb{P}^1$, we denote by $H^\tor$ the torsion subsheaf of $H$. The quotient $H/H^\tor$ is locally free and we will denote it by $H^\fre$. Since $H\simeq H^\tor\oplus H^\fre$ on $\mathbb{P}^1$, we call $H^\tor$ and $H^\fre$ the torsion part and the locally free part of $H$, respectively.
\end{notation}

Let $d_1$ and $d_2$ be the relative degrees of $\mathcal{F}$ and $\mathcal{T}$ over $S$, respectively. Then
the universal extention $\mathcal{X}$ (see (\ref{universal-extension}))
 is of relative degree $d:=d_1+d_2$ over $S$. The set
 $$\mathring{\mathbb{P}}(\mathcal{E}):=\{x\in \mathbb{P}(\mathcal{E})\,|\,\mathcal{X}_x\;  \hbox{is locally free}\}$$
 is an open subset of $\mathbb{P}(\mathcal{E})$. Its complement $\mathbb{P}(\mathcal{E})\setminus \mathring{\mathbb{P}}(\mathcal{E})$ has a sequence of nested closed subsets:
\[
\varnothing=Y_{d_1}\subset Y_{d_1+1}\subset \cdots \subset Y_{d-1}=\mathbb{P}(\mathcal{E})\setminus \mathring{\mathbb{P}}(\mathcal{E})
\]
where $d$ is the relative degree of $\mathcal{T}$ over $S$, and
\[
Y_r=\{x\in \mathbb{P}(\mathcal{E})\,|\,\deg((\mathcal{X}_x)^\tor) \geq d-r\},\quad r=d_1,\dots,d-1.
\]
Note that an extension $0\to \mathcal{F}_s \to X\to \mathcal{T}_s\to 0$ is split if and only if the torsion part of $X$ is of degree $d_2$. Hence $Y_{d_1}=\varnothing$.

For $m\geq 0$, applying $\Hom(-,\mathcal{O}(m))$ to { the extension (\ref{pullback-extension})}, we obtain an exact sequence
\[
0\to \Hom(\mathcal{X}_x,\mathcal{O}(m))\to \Hom(\mathcal{F}_s,\mathcal{O}(m))\stackrel{\delta_{x,m}}{\to} \Ext^1(\mathcal{T}_s,\mathcal{O}(m))\to \Ext^1(\mathcal{X}_x,\mathcal{O}(m))\to 0
\]
where $\delta_{x,m}$ denotes the connecting homomorphism. Then we have
\begin{prop}
$\rank\delta_{x,m}=d_2-\deg (\mathcal{X}_x)^\tor$ for any $m\geq d$.
\end{prop}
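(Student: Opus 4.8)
The plan is to compute $\rank \delta_{x,m}$ by analysing the four-term exact sequence displayed before the proposition. Since $\Hom$ and $\Ext^1$ of coherent sheaves on $\PP^1$ with values in a line bundle are finite-dimensional, I can read off
\[
\rank \delta_{x,m} = \dim \Hom(\mathcal{F}_s,\mathcal{O}(m)) - \dim \Hom(\mathcal{X}_x,\mathcal{O}(m)).
\]
So the task reduces to evaluating these two $\Hom$ dimensions for $m \geq d$. First I would handle $\Hom(\mathcal{F}_s,\mathcal{O}(m))$: the sheaf $\mathcal{F}_s$ is locally free of rank $n-k$ and degree $d_1$, and by Grothendieck's splitting theorem $\mathcal{F}_s \simeq \bigoplus_{j} \mathcal{O}(a_j)$ with $\sum a_j = d_1$. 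The assumption \eqref{assumption} restricted to the fiber $s$ gives $H^1(\PP^1, \mathcal{F}_s(-1)) = 0$, i.e. $a_j \geq -1$ for all $j$, hence $a_j + m \geq m - 1 \geq d - 1 \geq 0$, so $H^1(\mathcal{F}_s(m)) = 0$ and by Riemann–Roch $\dim \Hom(\mathcal{F}_s,\mathcal{O}(m)) = \sum_j (m - a_j + 1) = (n-k)(m+1) - d_1$.

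Next I would treat $\Hom(\mathcal{X}_x,\mathcal{O}(m))$ using the splitting $\mathcal{X}_x \simeq (\mathcal{X}_x)^\tor \oplus (\mathcal{X}_x)^\fre$ on $\PP^1$. The torsion part contributes nothing to $\Hom(-,\mathcal{O}(m))$ since $\mathcal{O}(m)$ is torsion-free. For the free part: $(\mathcal{X}_x)^\fre$ is locally free of rank $n-k$ and degree $d - \deg(\mathcal{X}_x)^\tor$. Here the key point is to show $H^1(\PP^1, (\mathcal{X}_x)^\fre(m)) = 0$ for $m \geq d$; granting that, Riemann–Roch gives $\dim\Hom(\mathcal{X}_x,\mathcal{O}(m)) = (n-k)(m+1) - (d - \deg(\mathcal{X}_x)^\tor)$, and subtracting yields exactly $d_1 - (d - d_2) + \deg(\mathcal{X}_x)^\tor = d_2 - \deg(\mathcal{X}_x)^\tor$, wait — recomputing: the difference is $\bigl[(n-k)(m+1) - d_1\bigr] - \bigl[(n-k)(m+1) - d + \deg(\mathcal{X}_x)^\tor\bigr] = d - d_1 - \deg(\mathcal{X}_x)^\tor = d_2 - \deg(\mathcal{X}_x)^\tor$, as claimed.

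The main obstacle is the vanishing $H^1((\mathcal{X}_x)^\fre(m)) = 0$ for $m \geq d$. To get it I would bound the splitting type of $(\mathcal{X}_x)^\fre$: write $(\mathcal{X}_x)^\fre \simeq \bigoplus_j \mathcal{O}(b_j)$; I need each $b_j \geq -m-1$, i.e. it suffices that $\min_j b_j \geq -d$. Since $\deg(\mathcal{X}_x)^\fre = d - \deg(\mathcal{X}_x)^\tor \leq d$ (as $\deg(\mathcal{X}_x)^\tor \geq 0$), if any $b_j$ were $< -d$ then another summand would have degree $> 0$, but this alone does not immediately force the bound. The cleaner route is to use the extension $0 \to \mathcal{F}_s \to \mathcal{X}_x \to \mathcal{T}_s \to 0$ directly: twisting by $\mathcal{O}(m)$ and taking cohomology gives $H^1(\mathcal{F}_s(m)) \to H^1(\mathcal{X}_x(m)) \to H^1(\mathcal{T}_s(m)) = 0$ (the last is zero since $\mathcal{T}_s$ is torsion), and $H^1(\mathcal{F}_s(m)) = 0$ was shown above for $m \geq d-1$; hence $H^1(\mathcal{X}_x(m)) = 0$, and since $\mathcal{X}_x(m) \simeq (\mathcal{X}_x)^\tor(m) \oplus (\mathcal{X}_x)^\fre(m)$ with the torsion summand having no higher cohomology, $H^1((\mathcal{X}_x)^\fre(m)) = 0$ follows. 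This is the step I expect to require the most care, since it is where the hypothesis \eqref{assumption} and the range $m \geq d$ (or even $m \geq d-1$) genuinely enter; everything else is Riemann–Roch bookkeeping on $\PP^1$.
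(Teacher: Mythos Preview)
Your overall strategy---compute both $\Hom$ dimensions by Riemann--Roch and subtract---matches the paper's, but there is a dualization mix-up in the vanishing conditions you check. To obtain $\dim \Hom(\mathcal{F}_s,\mathcal{O}(m)) = \sum_j(m-a_j+1)$ you need $\Ext^1(\mathcal{F}_s,\mathcal{O}(m)) = H^1(\mathcal{F}_s^\vee(m)) = 0$, i.e.\ each $m - a_j \geq -1$; this is an \emph{upper} bound on the $a_j$. What you actually verify is $H^1(\mathcal{F}_s(m)) = 0$, i.e.\ $a_j + m \geq -1$, which is a lower bound and does not imply the formula. The same confusion recurs for $\mathcal{X}_x$: your ``cleaner route'' establishes $H^1((\mathcal{X}_x)^\fre(m)) = 0$, but the Riemann--Roch expression for $\dim\Hom(\mathcal{X}_x,\mathcal{O}(m))$ requires instead $H^1\big(((\mathcal{X}_x)^\fre)^\vee(m)\big) = 0$, i.e.\ an upper bound on the $b_j$.

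The fix is short and is exactly what the paper does. First, the assumption gives $a_j \geq 0$ (not merely $\geq -1$: the condition $H^1(\mathcal{O}(a_j-1))=0$ reads $a_j - 1 \geq -1$). From $a_j \geq 0$ together with $\sum_j a_j = d_1$ you get the upper bound $a_j \leq d_1 < d \leq m$, hence $m - a_j \geq 0$ and the $\Hom$ formula follows. For $\mathcal{X}_x$, run your extension argument with the twist $\mathcal{O}(-1)$ rather than $\mathcal{O}(m)$: from $H^1(\mathcal{F}_s(-1)) = 0$ and $H^1(\mathcal{T}_s(-1)) = 0$ (torsion) you get $H^1(\mathcal{X}_x(-1)) = 0$, hence each $b_j \geq 0$, hence $b_j \leq \sum_j b_j \leq d \leq m$, and the computation goes through. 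With these corrections your proof coincides with the paper's.
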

\begin{proof}
Suppose $\mathcal{F}_s$ has rank $k$. Then $\mathcal{X}_x$ has rank $k$ as well. We have non-canonical isomorphisms
\[
\mathcal{F}_s\simeq \bigoplus_{i=1}^r\mathcal{O}(a_i), \quad \mathcal{X}_x\simeq (\mathcal{X}_x)^\tor\oplus \bigoplus_{i=1}^r\mathcal{O}(b_i)
\]
where $\sum_{i=1}^r{a_i}=d_1$ and $\sum_{i=1}^r b_i=d-\deg (\mathcal{X}_x)^\tor$.
By the assumption (\ref{assumption}), we have $H^1(\mathbb{P}^1,\mathcal{F}_s(-1))=0$ and also $H^1(\mathbb{P}^1,\mathcal{X}_x(-1))=0$. It follows that $a_i\geq 0$ and $b_i\geq 0$ for all $i$. Hence we have $a_i\leq d_1< d$ and $b_i\leq d$ for all $i$. So
$$\Hom(\mathcal{F}_s,\mathcal{O}(m))\simeq \bigoplus_{i=1}^k\Hom(\mathcal{O}(a_i),\mathcal{O}(m))=\bigoplus_{i=1}^k H^0(\mathcal{O}(m-a_i)),\;\; \hbox{and}$$
$$\Hom(\mathcal{X}_x,\mathcal{O}(m))\simeq \bigoplus_{i=1}^k\Hom(\mathcal{O}(b_i),\mathcal{O}(m))=\bigoplus_{i=1}^k H^0(\mathcal{O}(m-b_i)).$$ For $m\geq d$, we have $a_i\leq m$ and $b_i\leq m$, therefore
\[
\begin{split}
\rank \delta_{x,m} &=\dim \Hom(\mathcal{F}_s,\mathcal{O}(m))-\dim\Hom(\mathcal{X}_x,\mathcal{O}(m))=\sum_{i=1}^k(m-a_i+1)-(m-b_i+1) \\
& = \sum_{i=1}^k b_i-\sum_{i=1}^k a_i = d-\deg(\mathcal{X}_x)^\tor-d_1 = d_2-\deg (\mathcal{X}_x)^\tor
\end{split}
\]
\end{proof}

\begin{cor}\label{cor_rank}
For any integers $m,r$ with $m\geq d$ and $d_1\leq r\leq d$, we have $\deg (\mathcal{X}_x)^\tor\geq d-r$ if and only if $\rank \delta_{x,m}\leq r-d_1$.
\end{cor}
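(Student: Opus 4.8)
The plan is to obtain this corollary as an immediate algebraic consequence of the preceding Proposition, so the proof will be very short. Since $m \geq d$, the Proposition applies to the extension (\ref{pullback-extension}) and yields the identity
\[
\rank\delta_{x,m} = d_2 - \deg(\mathcal{X}_x)^\tor,
\]
which I would rewrite as $\deg(\mathcal{X}_x)^\tor = d_2 - \rank\delta_{x,m}$. Note that this already records the fact that $\rank\delta_{x,m}$ is independent of the chosen $m$ in the range $m\ge d$, which is what makes the statement of the corollary well-posed.

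Next I would simply substitute and rearrange. Using $d = d_1 + d_2$, the condition $\deg(\mathcal{X}_x)^\tor \geq d - r$ becomes $d_2 - \rank\delta_{x,m} \geq d_1 + d_2 - r$, and cancelling $d_2$ from both sides this is exactly $\rank\delta_{x,m} \leq r - d_1$. Every step is a genuine equivalence of inequalities between integers — we are only transposing terms in a single linear inequality — so reading the chain in either direction establishes both implications simultaneously. The hypothesis $m \geq d$ is used precisely to legitimize the appeal to the Proposition; the restriction $d_1 \leq r \leq d$ plays no role in the logical equivalence itself and is merely the natural range of indices, chosen so as to match the nested closed subsets $\varnothing = Y_{d_1} \subset Y_{d_1+1}\subset \cdots \subset Y_{d-1}$ introduced above, of which this corollary is the infinitesimal/pointwise rank criterion.

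I do not expect any real obstacle here: the mathematical content is entirely contained in the Proposition (and, behind it, in the splitting normal form for coherent sheaves on $\mathbb{P}^1$ together with assumption (\ref{assumption}), which forces all the exponents $a_i,b_i$ to be nonnegative and hence bounded by $m$). The only point requiring a moment's care is that the equivalence asserted is between inequalities rather than equalities, but since all quantities involved are integers the rearrangement is reversible without any caveats, and the corollary follows in one line.
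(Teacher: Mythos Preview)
Your proposal is correct and matches the paper's approach: the paper states the corollary without proof, treating it as an immediate consequence of the preceding Proposition via exactly the substitution $d=d_1+d_2$ and rearrangement you describe.
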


\begin{notation}
For any scheme $R$ and any coherent sheaf $\mathcal{H}$ on $R$, we will use the following abbreviations:
\[
\mathcal{H}^\e :=\EExt^1_R(\mathcal{H},\mathcal{O}_R)
\]
\end{notation}

Corollary \ref{cor_rank} suggests us that we can define the scheme structure of $Y_r$ in the following way. Let $m\gg 0$. Applying $\HHom(-,\mathcal{O}_{\mathbb{P}^1\times \mathbb{P}(\mathcal{E})}(m,1))$ to the universal extension (\ref{universal-extension}) to obtain an exact sequence
\[
0\to \mathcal{X}^\vee(m,1)\to \bar a^*\mathcal{F}^\vee(m,0)\stackrel{\delta_m}{\to} \bar a^*\mathcal{T}^\e(m,1)\to \mathcal{X}^\e(m,1)\to 0
\]
where $\delta_m$ is the connecting homomorphism. Next, applying $\pi_{\mathbb{P}(\mathcal{E})*}$ to $\delta_m$ and using identifications
\[
\begin{split}
\pi_*(\bar a^*\mathcal{F}^\vee(m,0))&=\pi_*\bar a^*(\mathcal{F}^\vee(m))=a^*(\pi_*\mathcal{F}^\vee(m)),\\
\pi_*(\bar a^*\mathcal{T}^\e(m,1))&=\pi_*\bar a^*(\mathcal{T}^\e(m))(1)=a^*(\pi_*\mathcal{T}^\e(m))(1)
\end{split}
\]
we obtain a nowhere-vanishing homomorphism
\begin{equation}\label{pushed-hom}
\pi_*\delta_m: a^*(\pi_*\mathcal{F}^\vee(m))\to a^*(\pi_*\mathcal{T}^\e(m))(1)
\end{equation}
Note that both $\pi_*\mathcal{F}^\vee(m)$ and $\pi_*\mathcal{T}^\e(m)$ are locally free sheaves on $S$ for $m\gg 0$. Applying the exterior power $\bigwedge^{l+1}$ ($l\geq 0$) to $\pi_*\delta_m$, we obtain
\[
\bigwedge^{l+1}\pi_*\delta_m: \bigwedge^{l+1}a^*(\pi_*\mathcal{F}^\vee(m))\to \bigwedge^{l+1}a^*(\pi_*\mathcal{T}^\e(m))(l+1)
\]
By Corollary \ref{cor_rank}, $Y_{d_1+l}$ is exactly the (set-theoretic) zero locus of $\bigwedge^{l+1}\pi_*\delta_m$, for each $l$ with $0\leq l<d_2$. The section $\bigwedge^{l+1}\pi_*\delta_m$ induces a homomorphism
\[
\HHom\bigg(\bigwedge^{l+1}a^*(\pi_*\mathcal{F}^\vee(m)),\bigwedge^{l+1}a^*(\pi_*\mathcal{T}^\e(m))(l+1)\bigg)^\vee\to \mathcal{O}_{\mathbb{P}(\mathcal{E})}
\]
The image, which we denote by $I_{l,m}$, is an ideal sheaf. In a similar way as in \cite{Shao}, Proposition 3.4, we can prove that
\begin{prop}
There exists an integer $N>0$ such that $I_{l,m}=I_{l,N}$ as subsheaves of $\mathcal{O}_{\mathbb{P}(\mathcal{E})}$ for all $m>N$ and for all $l$ with $0\leq l<d_2$.
\end{prop}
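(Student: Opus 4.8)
The plan is to reduce the stabilization statement to a Noetherian/finite-generation argument on the base $S$, exactly paralleling the proof of Proposition 3.4 in \cite{Shao}. The key point is that the ideal sheaves $I_{l,m}$ on $\mathbb{P}(\mathcal{E})$ are pulled back, via the affine morphism $a: \mathbb{P}(\mathcal{E})\to S$, from ideal sheaves on $S$ after a twist is accounted for. Indeed, $\bigwedge^{l+1}\pi_*\delta_m$ is a section of $\HHom\big(\bigwedge^{l+1}a^*(\pi_*\mathcal{F}^\vee(m)),\bigwedge^{l+1}a^*(\pi_*\mathcal{T}^\e(m))(l+1)\big)$, and since both $\pi_*\mathcal{F}^\vee(m)$ and $\pi_*\mathcal{T}^\e(m)$ are locally free on $S$ for $m\gg 0$, this $\HHom$ sheaf is of the form $a^*\mathcal{G}_{l,m}\otimes \mathcal{O}_{\mathbb{P}(\mathcal{E})}(l+1)$ for a locally free sheaf $\mathcal{G}_{l,m}$ on $S$. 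The induced map from the dual therefore has image $I_{l,m}$, and the claim is that this image stabilizes in $m$.

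First I would make the comparison between the maps $\pi_*\delta_m$ for different $m$ explicit. Multiplication by a section of $\mathcal{O}_{\mathbb{P}^1}(1)$ gives, for each $m$, a commutative square relating $\delta_m$ and $\delta_{m+1}$ (the connecting homomorphisms are functorial in the twist applied to the universal extension), hence after applying $\pi_{\mathbb{P}(\mathcal{E})*}$ and $\bigwedge^{l+1}$, a compatible system relating the sections $\bigwedge^{l+1}\pi_*\delta_m$ as $m$ grows. This shows the inclusions $I_{l,m}\subseteq I_{l,m+1}$ (or the reverse, depending on the normalization — one checks the direction using that a section of $\mathcal{O}_{\mathbb{P}^1}(1)$ has a zero, so twisting up can only enlarge the degeneracy locus set-theoretically, while the \emph{scheme} structure only gets coarser in a controlled way). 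The upshot is an ascending chain of ideal sheaves $I_{l,m}\subseteq \mathcal{O}_{\mathbb{P}(\mathcal{E})}$ indexed by $m$.

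Next, I would push the problem down to $S$. Since $a$ is affine and $\mathbb{P}(\mathcal{E})$ is a projective bundle, $a_*\mathcal{O}_{\mathbb{P}(\mathcal{E})} = \mathcal{O}_S$ and more generally $a_*\mathcal{O}_{\mathbb{P}(\mathcal{E})}(j)$ is locally free on $S$ of finite rank for each fixed $j\geq 0$; taking $a_*$ of the chain $\{I_{l,m}\}_m$ (after the appropriate twist by $\mathcal{O}(l+1)$, which is the same for all $m$) yields an ascending chain of coherent subsheaves of a \emph{fixed} coherent sheaf $a_*(\mathcal{O}(l+1))$ on the Noetherian scheme $S$. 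Such a chain stabilizes: there is $N_l$ with the pushed-forward ideals constant for $m>N_l$. Because the $I_{l,m}$ are themselves generated in $\mathbb{P}(\mathcal{E})$-relative degree $l+1$ by the image of their pushforward (they are the images of maps out of $a^*(\text{locally free})\otimes\mathcal{O}(-(l+1))$), recovering $I_{l,m}$ from $a_*(I_{l,m}\otimes\mathcal{O}(l+1))$ is faithful, so stabilization upstairs follows. Finally, since there are only finitely many values of $l$ in the range $0\le l<d_2$, set $N:=\max_l N_l$ to get a single $N$ that works for all $l$ simultaneously.

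The main obstacle I anticipate is getting the direction and the scheme-theoretic (not merely set-theoretic) comparison of $I_{l,m}$ and $I_{l,m+1}$ correct: one must verify that the transition maps in the twist really do induce honest inclusions of ideal sheaves with the claimed monotonicity, rather than just an equality of their radicals. This is handled exactly as in \cite{Shao}, Proposition 3.4, by writing out the multiplication-by-$\mathcal{O}_{\mathbb{P}^1}(1)$ square on the four-term exact sequence and chasing it through $\bigwedge^{l+1}$ and $\pi_{\mathbb{P}(\mathcal{E})*}$; the flatness of $\mathcal{F}$ and $\mathcal{T}$ over $S$ and the vanishing assumption \eqref{assumption} ensure all the pushforwards in sight are locally free for $m\gg 0$, so no higher-direct-image correction terms appear and the argument is purely formal once the bookkeeping is set up.
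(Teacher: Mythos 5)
The paper itself gives no proof here, deferring to [Shao, Prop.\ 3.4], so the question is whether your sketch would actually close the argument. It would not as written: the single step that carries all the content --- the scheme-theoretic inclusion $I_{l,m}\subseteq I_{l,m+1}$ --- is asserted rather than proved, and you explicitly leave its direction undetermined (``or the reverse, depending on the normalization''). That is not a detail you can defer: Noetherianity yields stabilization only for an \emph{ascending} chain, so if the natural comparison produced $I_{l,m+1}\subseteq I_{l,m}$ the whole strategy would evaporate. The heuristic you offer (a section of $\mathcal{O}_{\mathbb{P}^1}(1)$ has a zero, so the degeneracy locus can only grow) is both irrelevant and misleading: by Corollary \ref{cor_rank} the underlying sets $Y_r$ are the same for every $m\geq d$, and the issue is entirely about the ideals. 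Worse, the naive computation does not give an inclusion in either direction. Multiplication by $s\in H^0(\mathcal{O}_{\mathbb{P}^1}(1))$ gives $\pi_*\delta_{m+1}\circ\mu_s=\nu_s\circ\pi_*\delta_m$, and since pre- and post-composition can only shrink determinantal ideals, all this yields is that the $(l+1)$-minors of $\pi_*\delta_{m+1}\circ\mu_s$ lie in both $I_{l,m}$ and $I_{l,m+1}$; using two sections $s_0,s_1$ to generate $\pi_*\mathcal{F}^\vee(m+1)$ only bounds $I_{l,m+1}$ by the $(l+1)$-minors of $\pi_*\delta_m\oplus\pi_*\delta_m$, which is in general strictly larger than $I_{l,m}$. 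The missing idea is to work locally over $S$ and choose $s$ whose zero $\{z\}\times S$ avoids $\Supp(\mathcal{T})$ (possible on a neighborhood of any point of $S$, since $\Supp(\mathcal{T})$ is finite over $S$): then $\nu_s\colon a^*\pi_*(\mathcal{T}^\e(m))(1)\to a^*\pi_*(\mathcal{T}^\e(m+1))(1)$ is an \emph{isomorphism}, so the minors of $\pi_*\delta_{m+1}\circ\mu_s$ generate exactly $I_{l,m}$, whence $I_{l,m}\subseteq I_{l,m+1}$ locally and therefore globally. (Equivalently, one can identify $I_{l,m}$ with the Fitting ideal $\mathrm{Fitt}_{d_2-l-1}$ of $\mathrm{coker}(\pi_*\delta_m)=\pi_*(\mathcal{X}^\e(m,1))$, which is locally independent of $m$ for the same reason.) With the inclusion in hand, ACC for coherent subsheaves of $\mathcal{O}_{\mathbb{P}(\mathcal{E})}$ finishes the proof, taking $N$ to be the maximum over the finitely many $l$.

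Two further points. The structure morphism $a$ is a projective bundle, hence proper and not affine, so ``since $a$ is affine and $\mathbb{P}(\mathcal{E})$ is a projective bundle'' is self-contradictory; and the whole detour through $a_*$ is unnecessary, because $\mathbb{P}(\mathcal{E})$ is itself Noetherian (projective over the Noetherian $S$) and ACC applies directly to the chain of subsheaves of $\mathcal{O}_{\mathbb{P}(\mathcal{E})}$. Your step recovering $I_{l,m}$ from $a_*(I_{l,m}\otimes\mathcal{O}_{\mathbb{P}(\mathcal{E})}(l+1))$ is salvageable (the twisted ideal is relatively globally generated by construction), but it adds a justification burden for no gain. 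Finally, the range of $m$ for which the comparison is valid must be pinned down: one needs $m$ large enough that $\pi_*\mathcal{F}^\vee(m)$ and $\pi_*\mathcal{T}^\e(m)$ are locally free and the relevant $R^1\pi_*$ vanish, which is where the uniform threshold before stabilization comes from.
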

Obviously, the subscheme defined by the ideal $I_{l,N}$ is supported on the closed subset $Y_{d_1+l}$. For simplicity, we denote this subscheme still by $Y_{d_1+l}$, that is,
\begin{equation}\label{subscheme}
Y_r = \text{ the closed subscheme defined by the ideal }I_{r-d_1+1,N}, \quad r=d_1+1,\dots,d-1
\end{equation}
In addition, we set $\mathring{Y}_r:= Y_r\setminus Y_{r-1}$ for $d_1<r<d$. Note that $\mathring{Y}_{d_1+1}=Y_{d_1+1}$ since $Y_{d_1}=\varnothing$.

Again, in a similar way as in \cite{Shao}, Theorem 3.5, we can prove that
\begin{prop}\label{FlatteningStratification}
The locally closed subschemes $Y_{d_1+1}$, $\mathring{Y}_{d_1+2}$, $\cdots$, $\mathring{Y}_{d-1}$ of $\mathbb{P}(\mathcal{E})$ form the flattening stratification of $\mathbb{P}(\mathcal{E})$ by the sheaf $\mathcal{X}^\e$, which means that, for any noetherian $\Bbbk$-scheme $R$ and any morphism $f: R\to \mathbb{P}(\mathcal{E})$, the sheaf $\bar f^*\mathcal{X}^\e$ on $\mathbb{P}^1\times R$ is flat over $R$ with relative degree $d-r$ if and only if $f$ factors through the inclusion $\mathring{Y}_r\embed \mathbb{P}(\mathcal{E})$. In particular, the restriction of $\mathcal{X}^\e$ to $\mathbb{P}^1\times \mathring{Y}_r$ is flat over $\mathring{Y}_r$ with relative degree $d-r$.
\end{prop}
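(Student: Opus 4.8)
The plan is to reduce the statement to the universal property of the flattening stratification attached to a map of locally free sheaves, exactly in the spirit of \cite{Shao}, Theorem 3.5. First I would observe that, for any noetherian $\Bbbk$-scheme $R$ and any morphism $f: R\to \mathbb{P}(\mathcal{E})$, the sheaf $\bar f^*\mathcal{X}^\e$ is canonically $\EExt^1_{\mathbb{P}^1\times R}(\bar f^*\mathcal{X},\mathcal{O})$; more precisely, by applying $\HHom(-,\mathcal{O})$ to the pulled-back universal extension and using that $\bar f^*$ is right-exact together with the local-freeness of $\bar a^*\mathcal{F}(0,1)$, one gets that forming $\mathcal{X}^\e$ commutes with base change. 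Likewise $\bar f^*(\pi_*\mathcal{F}^\vee(m)) = (\pi_{R})_*(\text{pullback})$ for $m\gg 0$ by cohomology and base change (this is where assumption (\ref{assumption}) is used, to keep the relevant higher direct images locally free and base-change-compatible). Hence the construction of the homomorphism $\pi_*\delta_m$ of (\ref{pushed-hom}) and of its exterior powers is compatible with pulling back along $f$.

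Next I would invoke the standard fact (Grothendieck, or \cite{Shao}, §3) that for a homomorphism $\varphi: \mathcal{A}\to \mathcal{B}$ of locally free sheaves on a noetherian scheme $Z$, the loci $\{z : \rank\varphi_z \leq \ell\}$ carry canonical closed subscheme structures, defined by the ideals $I_\ell(\varphi)$ generated by the $(\ell+1)$-minors — equivalently, by the image of the map $\big(\bigwedge^{\ell+1}\mathcal{A}\big)^\vee\otimes\bigwedge^{\ell+1}\mathcal{B}\to\mathcal{O}_Z$ induced by $\bigwedge^{\ell+1}\varphi$ — and that these subschemes have the universal property: a morphism $g: W\to Z$ factors through the $\ell$-th degeneracy subscheme if and only if $\Coker(g^*\varphi)$ is locally free of rank $\rank\mathcal{B}-\ell$... more precisely, if and only if the $(\ell+1)$-minors of $g^*\varphi$ vanish, and the complement of the $(\ell-1)$-degeneracy locus within it is where $\rank g^*\varphi$ is exactly $\ell$. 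Applying this with $Z=\mathbb{P}(\mathcal{E})$, $\varphi=\pi_*\delta_m$ for $m=N$, $\mathcal{A}=a^*(\pi_*\mathcal{F}^\vee(N))$, $\mathcal{B}=a^*(\pi_*\mathcal{T}^\e(N))(1)$, and $\ell=r-d_1$, and using (\ref{subscheme}), the subscheme $Y_r$ is precisely this $(r-d_1)$-th degeneracy subscheme, so $\mathring{Y}_r=Y_r\setminus Y_{r-1}$ is the locally closed locus where $\rank\pi_*\delta_m = r-d_1$ exactly. By base-change compatibility from the previous paragraph, $f: R\to\mathbb{P}(\mathcal{E})$ factors through $\mathring{Y}_r$ if and only if $\rank(\pi_{R})_*(\bar f^*\delta_m)$ is everywhere equal to $r-d_1$.

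It remains to translate the rank condition on $(\pi_R)_*\bar f^*\delta_m$ into the flatness-with-relative-degree-$(d-r)$ condition on $\bar f^*\mathcal{X}^\e$. For this I would argue fibrewise and then spread out: by Corollary \ref{cor_rank} together with the Proposition preceding it, for each $\Bbbk$-point $w\in R$ the fibre rank of $\bar f^*\delta_m$ at $w$ equals $d_2-\deg((\bar f^*\mathcal{X})_w)^\tor = d_2 - \deg(\mathcal{X}_{f(w)})^\tor$; hence constancy of this rank equal to $r-d_1$ is equivalent to $\deg((\bar f^*\mathcal{X}^\e)_w) = d_2 - (d_2 - (d-r)) = d-r$ for all $w$, i.e. the function $w\mapsto \deg((\bar f^*\mathcal{X}^\e)_w)$ being constant $=d-r$. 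Since $\bar f^*\mathcal{X}^\e$ is a finitely generated module supported in relative dimension $0$ over the reduced-along-fibres situation, constancy of the fibrewise length (= degree) is equivalent to flatness over $R$ by the local criterion for flatness / the fact that for a finite morphism a coherent sheaf is flat iff fibre dimensions are locally constant; combined with the cohomology-and-base-change identification of $(\pi_R)_*\bar f^*\delta_m$ with the family of connecting maps, this gives flatness of $\bar f^*\mathcal{X}^\e$ over $R$ with relative degree $d-r$. Conversely, flatness of relative degree $d-r$ forces the fibre ranks of the connecting maps to be the constant $r-d_1$, hence $f$ factors through $\mathring{Y}_r$. The last assertion is the special case $R=\mathring{Y}_r$, $f=$ inclusion.

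The main obstacle I anticipate is the base-change bookkeeping in the first two paragraphs: verifying carefully that $\mathcal{X}^\e$, the pushforwards $\pi_*\mathcal{F}^\vee(m)$, $\pi_*\mathcal{T}^\e(m)$, and the map $\pi_*\delta_m$ all commute with arbitrary base change $f: R\to\mathbb{P}(\mathcal{E})$ — this is exactly where assumption (\ref{assumption}) (and its consequence that the $a_i, b_i$ are non-negative, as in the Proposition above) is doing real work, and where one must be careful that $R$ is only assumed noetherian, not reduced. This is the analogue of \cite{Shao}, Propositions 3.1–3.4, and I would handle it by the same techniques: Grothendieck's cohomology-and-base-change theorem applied after twisting by $\mathcal{O}_{\mathbb{P}^1}(m)$ with $m\gg0$, plus the fact that $\EExt^1(\mathcal{T},\mathcal{F})$ is itself torsion, flat over the base, and $\pi_*$-acyclic, so all the relevant sheaves are locally free and their formation is insensitive to base change.
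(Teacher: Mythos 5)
Your overall strategy --- identify the subschemes $Y_r$ of (\ref{subscheme}) with the degeneracy subschemes of the locally free map $\pi_*\delta_N$ of (\ref{pushed-hom}), verify via assumption (\ref{assumption}) and cohomology-and-base-change that $\pi_*\delta_N$ and $\mathcal{X}^\e$ commute with arbitrary base change, and then invoke the universal property of determinantal subschemes --- is exactly the route the paper intends: the paper offers no proof here beyond the citation of the analogous Theorem 3.5 of \cite{Shao}, and your first two paragraphs reconstruct that argument faithfully.

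One step in your final paragraph would, however, fail as written. You pass from ``the fibrewise length of $\bar f^*\mathcal{X}^\e$ is constant equal to $d-r$'' to ``$\bar f^*\mathcal{X}^\e$ is flat over $R$'' by appealing to the claim that a coherent sheaf finite over the base is flat iff its fibre dimensions are locally constant. That equivalence holds only over a reduced base, whereas the Proposition quantifies over arbitrary noetherian $R$ (over $R=\Spec\Bbbk[\epsilon]/(\epsilon^2)$ a skyscraper sheaf $\Bbbk$ has constant fibre length $1$ but is not flat); the non-reduced case is precisely what the Fitting-ideal scheme structure (\ref{subscheme}) exists to control. The fix is already contained in your second paragraph: use the scheme-theoretic universal property directly, namely that $f$ factors through $\mathring{Y}_r$ if and only if $\Coker(f^*\pi_*\delta_N)$ is locally free of corank $r-d_1$; then identify $\Coker(f^*\pi_*\delta_N)$ with $\pi_{R*}\bigl(\bar f^*(\mathcal{X}^\e(N,1))\bigr)$ (right-exactness of $\bar f^*$, base change for $\pi_*$ of the locally free source and target at level $N\gg 0$, and vanishing of $R^1\pi_*$ of the image sheaf); and finally use that a coherent sheaf on $\mathbb{P}^1\times R$ whose support is finite over $R$ is flat over $R$ exactly when its $\pi_{R*}$ is locally free. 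Corollary \ref{cor_rank} then enters only to match the numerical corank $r-d_1$ with the relative degree $d-r$, which is a pointwise statement and unproblematic.
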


\section{The normal bundles: a first case}          

 Let $V$ be a vector space of dimension $n$ over $\Bbbk$, and $\Gr(k,V)$ be the Grassmannian parametrizing all the $k$-dimensional subspaces of $V$. For any $d\geq 0$, the space $\Mor_d(\mathbb{P}^1,\Gr(k,V))$ of degree $d$ maps from $\mathbb{P}^1$ to $\Gr(k,V)$ is a nonsingular quasi-projective variety. A smooth compactification of $\Mor_d(\mathbb{P}^1,\Gr(k,V))$ is given by the Quot scheme
  $Q_d:=\Quot^{n-k,d}_{V_{\mathbb{P}^1}/\mathbb{P}^1/\Bbbk}$, parametrizing all rank-$(n-k)$, degree-$d$ quotients $V_{\mathbb{P}^1}\surj F$ of the trivial vector bundle $V_{\mathbb{P}^1}$ of rank $n$ on $\mathbb{P}^1$. It comes with a universal exact sequence of sheaves on $\mathbb{P}^1\times Q_d$:
\[
0\to \mathcal{E}_d\to V_{\mathbb{P}^1\times Q_d}\to \mathcal{F}_d\to 0
\]
Here $\mathcal{F}_d$ is flat over $Q_d$ with rank $n-k$ and relative degree $d$. It follows that $\mathcal{E}_d$ is locally free with rank $k$ and relative degree $-d$ over $Q_d$. The open subvariety $\mathring{Q}_d:=\{x\in Q_d\,|\,(\mathcal{F}_d)_x$ is locally free$\}$ coincides with $\Mor_d(\mathbb{P}^1,\Gr(k,V))$.

Recall from the introduction that for any $d>r\geq 0$,  we have the closed subscheme
\[
Z_{d,r}=\{ [V_{\mathbb{P}^1}\surj F] \in Q_d \;|\; \deg(F^\tor) \ge d-r\}.
\]
We refer the reader to Section 3 in \cite{Shao} for the details on the subscheme structure of $Z_{d,r}$.  These are the subschemes that are blown up to yield
the variety $\widetilde Q_d$.  Below we analyze the normal bundle of the locally closed subsets $\mathring{Z}_{d,r}:=Z_{d,r}\setminus Z_{d,r-1}$.

The subscheme $Z_{d,r}$ is closely related to the following relative Quot scheme over $Q_r$:
\[
Q_{d,r}:=\Quot^{0,d-r}_{\mathcal{E}_r/\mathbb{P}^1\times Q_r/Q_r}
\]
If a point of $Q_r$ is represented by the exact sequence $E\inj V_{\mathbb{P}^1}\surj F$, then the fiber of $Q_{d,r}$ over the point consists of points represented by the quotient $E\surj T$ with $T$ torsion of degree $d-r$. Let $\theta_{d,r}: Q_{d,r}\to Q_r$ be the structure morphism. (Note that the notation for this morphism is simply $\theta$ in \cite{Shao}. We add sub-index in this paper because we will deal with multiple $Q_{d,r}$'s together with their structure morphisms simultaneously. The same reason applies to the other similar situations below.) In \cite{Shao}, we showed that $Q_{d,r}$ is relatively smooth over $Q_r$, hence is a nonsingular variety. It comes equipped with a universal exact sequence on $\mathbb{P}^1\times Q_{d,r}$:
\begin{equation}\label{USES-Q(d,r)}
0\to \mathcal{E}_{d,r}\to \theta_{d,r}^*\mathcal{E}_r\to \mathcal{T}_{d,r}\to 0
\end{equation}
Here $\mathcal{T}_{d,r}$ is flat over $Q_{d,r}$ with rank $0$ and relative degree $d-r$. Since $\theta_{d,r}^*\mathcal{E}_r$ is locally free of rank $k$ and of relative degree $-r$ over $Q_{d,r}$, it follows that $\mathcal{E}_{d,r}$ is locally free of rank $k$ and of relative degree $-d$ over $Q_{d,r}$. We set $\mathring{Q}_{d,r}:=\theta_{d,r}^{-1}(\mathring{Q}_r)$ and write $\mathring{\theta}_{d,r}: \mathring{Q}_{d,r}\to Q_r$ for the restriction of $\theta_{d,r}$ on $\mathring{Q}_{d,r}$.

We form a commutative diagram on $\mathbb{P}^1\times Q_{d,r}$:
\begin{equation}\label{bigDiagram}
\CD{
       &                                     & 0\ar[d]                                        & 0\ar[d]                             & \\
0\ar[r]&\mathcal{E}_{d,r}\ar[r]\ar@{=}[d]&\bar\theta_{d,r}^*\mathcal{E}_{r}\ar[r]\ar[d]    & \mathcal{T}_{d,r}\ar[r]\ar@{..>}[d]     & 0 \\
0\ar[r]&\mathcal{E}_{d,r}\ar[r]          &V_{\mathbb{P}^1\times{Q_{d,r}}}\ar[r]\ar[d]    & \mathcal{F}_{d,r}\ar[r]\ar@{..>}[d]     & 0  \\
       &                                     &\bar\theta_{d,r}^*\mathcal{F}_{r}\ar@{=}[r]\ar[d]& \bar\theta_{d,r}^*\mathcal{F}_r\ar[d]& \\
       &                                     &0                                               & 0                                   & \\
}
\end{equation}
where the middle column is the pullback of the universal exact sequence of $Q_r$ via $\bar\theta_{d,r}$, and $\mathcal{F}_{d,r}$ is defined to be the cokernel of the composite map $\mathcal{E}_{d,r}\to \bar\theta_{d,r}^*\mathcal{E}_r\to V_{\mathbb{P}^1\times Q_{d,r}}$. One checks that $\mathcal{F}_{d,r}$ is flat over $Q_{d,r}$ of rank $n-k$ and relative degree $d$. By the universal property of $Q_d$, the middle row determines a morphism $\phi_{d,r}: Q_{d,r}\to Q_d$ (it is denoted by $\phi$ in \cite{Shao}) such that the following diagram commutes:
\begin{equation}\label{DiagramQdr}
\CD{
0\ar[r]&\mathcal{E}_{d,r}\ar[r]\ar@{=}[d]&V_{\mathbb{P}^1\times{Q_{d,r}}}\ar[r]\ar@{=}[d]&\mathcal{F}_{d,r}\ar[r]\ar[d]^{\simeq} & 0  \\
0\ar[r]&\bar\phi_{d,r}^*\mathcal{E}_d\ar[r]  &\bar\phi_{d,r}^*V_{\mathbb{P}^1\times{Q_d}}\ar[r] &\bar\phi_{d,r}^*\mathcal{F}_d\ar[r] &0\\
}
\end{equation}
The morphism $\phi_{d,r}$ maps $Q_{d,r}$ onto $Z_{d,r}$ (cf. Proposition 4.6 of \cite{Shao}).  We
denote by $\mathring{\phi}_{d,r}: \mathring{Q}_{d,r}\to Q_d$ the restriction of $\phi_{d,r}$ on $\mathring{Q}_{d,r}$. Note that $\mathring{\phi}_{d,0}=\phi_{d,0}$ since $\mathring{Q}_{d,0}=Q_{d,0}$. We showed that $\mathring{\phi}_{d,r}$ maps $\mathring{Q}_{d,r}$ into $\mathring{Z}_{d,r}\subset Q_d$ in \cite{Shao}, where $\mathring{Z}_{d,r}= Z_{d,r} \setminus Z_{d,r-1}$, and we set
$Z_{d,-1}:=\varnothing$. We denote by $\mathring{\varphi}_{d,r}: \mathring{Q}_{d,r}\to \mathring{Z}_{d,r}$ the map obtained by restricting the codomain of $\mathring{\phi}_{d,r}$ to $\mathring{Z}_{d,r}$. In \cite{Shao}, we showed that $\mathring{\varphi}_{d,r}: \mathring{Q}_{d,r}\to \mathring{Z}_{d,r}$ is in fact an isomorphism of schemes, hence $\mathring{\phi}_{d,r}: \mathring{Q}_{d,r}\to Q_d$ is an embedding. (cf. Proposition 4.8, \cite{Shao})

\begin{prop}[{\cite{Str87}, Theorem 7.1}]\label{tangentSheaf}
The tangent bundle $\mathcal{T}_{Q_d}$ of $Q_d$ is naturally isomorphic to $\pi_*\HHom(\mathcal{E}_d,\mathcal{F}_d)$. The relative tangent bundle $\mathcal{T}_{Q_{d,r}/Q_r}$ of $Q_{d,r}$ over $Q_r$ is naturally isomorphic to $\pi_*\HHom(\mathcal{E}_{d,r},\mathcal{T}_{d,r})$.
\end{prop}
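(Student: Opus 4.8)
The proof is essentially a citation of Strømme's description of the tangent bundle of a Quot scheme, adapted to the relative situation; let me sketch how the two assertions are obtained.

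\bigskip

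\noindent\textbf{Proof proposal.}
The plan is to deduce both statements from the deformation theory of Quot schemes, in the form established by Strømme. First I would recall the infinitesimal description: for a point $[V_{\mathbb{P}^1}\surj F]$ of $Q_d$ corresponding to the kernel sheaf $E$, the Zariski tangent space is $\Hom_{\mathbb{P}^1}(E,F)$, and since $E$ is locally free and $F$ has no higher cohomology obstruction issues over $\mathbb{P}^1$ (here one uses $\Ext^1_{\mathbb{P}^1}(E,F)=H^1(\mathbb{P}^1,\HHom(E,F))$ together with the fact that $\HHom(E,F)$ is a sheaf on a curve whose $h^1$ is locally constant along $Q_d$ by semicontinuity and the smoothness of $Q_d$), the obstruction space vanishes fiberwise in the relevant sense. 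The precise assertion I would invoke is \cite{Str87}, Theorem 7.1, which identifies the tangent \emph{bundle} $\mathcal{T}_{Q_d}$ with $\pi_{*}\HHom(\mathcal{E}_d,\mathcal{F}_d)$, where $\pi=\pi_{Q_d}:\mathbb{P}^1\times Q_d\to Q_d$ and $\mathcal{E}_d,\mathcal{F}_d$ are the universal sub- and quotient sheaves. The key points to check are that $\HHom(\mathcal{E}_d,\mathcal{F}_d)$ is flat over $Q_d$ with $R^1\pi_*=0$ on each fiber, so that Cohomology and Base Change gives $\bigl(\pi_*\HHom(\mathcal{E}_d,\mathcal{F}_d)\bigr)\otimes\kappa(x)=\Hom_{\mathbb{P}^1}(( \mathcal{E}_d)_x,(\mathcal{F}_d)_x)$, compatibly with the Kodaira–Spencer map; this last compatibility is exactly the content of Strømme's theorem and is what makes the identification natural (i.e., functorial and independent of choices).

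\bigskip

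For the relative statement, I would run the same argument with $Q_d$ replaced by $Q_{d,r}$ and absolute tangent bundle replaced by the relative tangent bundle over $Q_r$. Concretely, $Q_{d,r}=\Quot^{0,d-r}_{\mathcal{E}_r/\mathbb{P}^1\times Q_r/Q_r}$ is a relative Quot scheme, so Strømme's description applies verbatim in the relative setting: the relative tangent bundle $\mathcal{T}_{Q_{d,r}/Q_r}$ is $\pi_{Q_{d,r}*}\HHom_{\mathbb{P}^1\times Q_{d,r}}(\mathcal{E}_{d,r},\mathcal{T}_{d,r})$, where $\mathcal{E}_{d,r}$ and $\mathcal{T}_{d,r}$ are the universal sub- and quotient sheaves in the sequence \eqref{USES-Q(d,r)}. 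Here one uses that $\mathcal{E}_{d,r}$ is locally free (established earlier in the section) and that $\mathcal{T}_{d,r}$ is torsion and flat over $Q_{d,r}$; the higher direct image $R^1\pi_{Q_{d,r}*}\HHom(\mathcal{E}_{d,r},\mathcal{T}_{d,r})$ vanishes fiberwise because $\HHom((\mathcal{E}_{d,r})_x,(\mathcal{T}_{d,r})_x)$ is a torsion sheaf on $\mathbb{P}^1$, hence has no $H^1$. Thus Cohomology and Base Change again yields a locally free sheaf whose fiber at $x$ is $\Hom_{\mathbb{P}^1}((\mathcal{E}_{d,r})_x,(\mathcal{T}_{d,r})_x)$, the tangent space to the fiber of $\theta_{d,r}$ through $x$.

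\bigskip

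The only real subtlety — and the step I expect to require the most care — is the \emph{naturality} of the isomorphisms, i.e., checking that the identifications are the ones induced by the Kodaira–Spencer maps and are compatible with base change, rather than merely abstract isomorphisms of vector bundles with the right fibers. This is precisely why the statement is attributed to \cite{Str87}, Theorem 7.1, which proves exactly this compatibility for Grothendieck Quot schemes; the relative case reduces to the absolute case after base change along $\mathbb{P}^1\times Q_r\to \Spec\Bbbk$ (or, more carefully, by applying the absolute statement to $Q_{d,r}$ viewed as an open subscheme of the Quot scheme of $\mathcal{E}_r$ over the base $Q_r$ and then passing to relative tangent sheaves via the exact sequence $0\to\mathcal{T}_{Q_{d,r}/Q_r}\to\mathcal{T}_{Q_{d,r}}\to\theta_{d,r}^*\mathcal{T}_{Q_r}\to 0$). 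Since Strømme's theorem is quoted as a black box, the proof amounts to verifying the hypotheses of that theorem in our two situations — local freeness of $\mathcal{E}_d$, resp. $\mathcal{E}_{d,r}$, flatness of $\mathcal{F}_d$, resp. $\mathcal{T}_{d,r}$, over the base, and the vanishing of the fiberwise $H^1$ of the relevant $\HHom$ sheaf — all of which were recorded in the construction of $Q_d$ and $Q_{d,r}$ above.
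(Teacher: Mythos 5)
Your proposal is correct and follows essentially the same route as the paper: the first assertion is quoted directly from Str{\o}mme's Theorem 7.1, and the relative statement is obtained by running the same deformation-theoretic argument for the relative Quot scheme $Q_{d,r}$ over $Q_r$ (the paper simply says the second claim follows ``by slightly modifying the proof'' of Str{\o}mme and omits the details you supply). Your additional checks — local freeness of $\mathcal{E}_{d,r}$, flatness of $\mathcal{T}_{d,r}$, and the fiberwise vanishing of $H^1$ of the torsion sheaf $\HHom((\mathcal{E}_{d,r})_x,(\mathcal{T}_{d,r})_x)$ feeding into Cohomology and Base Change — are exactly the verifications the paper leaves implicit.
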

\begin{proof}
The first assertion is proved in \cite{Str87}, Theorem 7.1. The second, which is a relative version of the first, can be proved by slightly modifying the proof of \cite{Str87}, Theorem 7.1. We omit the details.
\end{proof}

The tangent bundle $\mathcal{T}_{Q_{d,r}}$ of $Q_{d,r}$ fits into the following exact sequence
\begin{equation}\label{relativeTangent}
0\to \mathcal{T}_{Q_{d,r}/Q_r}\to \mathcal{T}_{Q_{d,r}}\to \theta_{d,r}^*\mathcal{T}_{Q_r}\to 0
\end{equation}
The morphism $\mathring{\phi}_{d,r}: \mathring{Q}_{d,r}\to Q_d$ is an embedding and is factored as $\mathring{Q}_{d,0}\stackrel{\mathring{\varphi}_{d,r}}{\stackrel{\sim}{\to}} \mathring{Z}_{d,r}\embed Q_d$ (cf. Proposition 4.8 of \cite{Shao}). Let $\mathcal{N}_{\mathring{Q}_{d,r}/Q_d}$ denote the normal sheaf of $\mathring{Q}_{d,0}$ in $Q_d$. Since $Q_{d,r}$ is nonsingular, $\mathcal{N}_{\mathring{Q}_{d,r}/Q_d}$ is locally free.
\begin{prop}\label{Prop-NormalBundle0}
We have a natural identification $\mathcal{N}_{\mathring{Q}_{d,r}/Q_d}=\pi_*\EExt^1(\mathcal{T}_{d,r},\bar\theta_{d,r}^*\mathcal{F}_r) |_{\mathring{Q}_{d,r}}$. In particular, letting $r=0$, we have
$
\mathcal{N}_{Q_{d,0}/Q_d}= \pi_*\EExt^1(\mathcal{T}_{d,0},\bar\theta_{d,0}^*\mathcal{F}_0).
$
\end{prop}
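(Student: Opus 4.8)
The plan is to compute the normal bundle $\mathcal N_{\mathring Q_{d,r}/Q_d}$ via the standard exact sequence relating the tangent bundles of $\mathring Q_{d,r}$, $Q_d$, and the normal bundle, and to identify each term using Proposition~\ref{tangentSheaf}. Concretely, since $\mathring\phi_{d,r}$ is an embedding into the nonsingular variety $Q_d$, we have a short exact sequence
\[
0\to \mathcal T_{Q_{d,r}}|_{\mathring Q_{d,r}}\to \mathring\phi_{d,r}^*\mathcal T_{Q_d}\to \mathcal N_{\mathring Q_{d,r}/Q_d}\to 0,
\]
so that $\mathcal N_{\mathring Q_{d,r}/Q_d}$ is the cokernel of $d\mathring\phi_{d,r}$. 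By Proposition~\ref{tangentSheaf}, $\mathcal T_{Q_d}=\pi_*\HHom(\mathcal E_d,\mathcal F_d)$, and pulling back along $\bar\phi_{d,r}$ together with the isomorphisms in diagram~(\ref{DiagramQdr}) (which identify $\bar\phi_{d,r}^*\mathcal E_d\simeq\mathcal E_{d,r}$ and $\bar\phi_{d,r}^*\mathcal F_d\simeq\mathcal F_{d,r}$, both compatibly), we get $\mathring\phi_{d,r}^*\mathcal T_{Q_d}=\pi_*\HHom(\mathcal E_{d,r},\mathcal F_{d,r})|_{\mathring Q_{d,r}}$. Similarly, using the exact sequence (\ref{relativeTangent}) and Proposition~\ref{tangentSheaf} for both $\mathcal T_{Q_{d,r}/Q_r}=\pi_*\HHom(\mathcal E_{d,r},\mathcal T_{d,r})$ and $\theta_{d,r}^*\mathcal T_{Q_r}=\theta_{d,r}^*\pi_*\HHom(\mathcal E_r,\mathcal F_r)=\pi_*\HHom(\bar\theta_{d,r}^*\mathcal E_r,\bar\theta_{d,r}^*\mathcal F_r)$ (base change for $\pi_*$ applied to the locally free sheaf $\HHom(\mathcal E_r,\mathcal F_r)$ with vanishing $R^1$, which holds as $d\gg$ is not needed here — $\mathcal E_r$ is locally free of negative degree and $\mathcal F_r$ is a quotient of a trivial bundle), we obtain a description of $\mathcal T_{Q_{d,r}}$.

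The key step is then to compute the cokernel of $\mathcal T_{Q_{d,r}}\to\mathring\phi_{d,r}^*\mathcal T_{Q_d}$ directly from diagram~(\ref{bigDiagram}). Apply $\HHom(\mathcal E_{d,r},-)$ to the middle row $0\to\mathcal E_{d,r}\to V_{\mathbb P^1\times Q_{d,r}}\to\mathcal F_{d,r}\to 0$ and to the middle column $0\to\bar\theta_{d,r}^*\mathcal E_r\to V_{\mathbb P^1\times Q_{d,r}}\to\bar\theta_{d,r}^*\mathcal F_r\to 0$, and also $\HHom(-,\bar\theta_{d,r}^*\mathcal F_r)$ to the top row $0\to\mathcal E_{d,r}\to\bar\theta_{d,r}^*\mathcal E_r\to\mathcal T_{d,r}\to 0$. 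Pushing forward by $\pi$ and chasing the resulting diagram, the map $d\mathring\phi_{d,r}$ on tangent spaces becomes, fiberwise over a point represented by $E\inj V_{\mathbb P^1}\surj F$ with quotient $E\surj T$, the natural map $\Hom(E',T)\oplus\Hom(E,F)\to\Hom(E',F')$ where $E'=\ker(E\surj T)$ and $F'=\Coker(E'\hookrightarrow V_{\mathbb P^1})$; its cokernel is $\Ext^1(T,F)$ via the connecting homomorphism for $\HHom(-,\bar\theta_{d,r}^*\mathcal F_r)$ applied to the top row (using $\HHom(\mathcal E_{d,r},\bar\theta_{d,r}^*\mathcal F_r)$ and $\HHom(\bar\theta_{d,r}^*\mathcal E_r,\bar\theta_{d,r}^*\mathcal F_r)$ whose $R^1\pi_*$ vanish, so that $R^1\pi_*$ of this $\HHom$ sequence produces precisely $\pi_*\EExt^1(\mathcal T_{d,r},\bar\theta_{d,r}^*\mathcal F_r)$). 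One must check that the identifications are natural (not just fiberwise) — this follows because all the pushforwards involved commute with base change, by the long exact sequences being exact already at the sheaf level on $Q_{d,r}$ after pushing forward, since the relevant $R^1\pi_*\HHom$ terms vanish.

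The main obstacle I anticipate is not any single cohomology computation but rather the bookkeeping needed to show that the cokernel of $d\mathring\phi_{d,r}$ is \emph{canonically} (functorially in families, and independently of the non-canonical splitting $F\simeq F^\tor\oplus F^\fre$) identified with $\pi_*\EExt^1(\mathcal T_{d,r},\bar\theta_{d,r}^*\mathcal F_r)$ restricted to $\mathring Q_{d,r}$ — in particular verifying that $\pi_*\EExt^1(\mathcal T_{d,r},\bar\theta_{d,r}^*\mathcal F_r)$ is indeed locally free on the open set $\mathring Q_{d,r}$ (it is, by Cohomology and Base Change, exactly as in the argument in Section~2 showing $\mathcal E$ is locally free, together with the flatness of $\mathcal T_{d,r}$ and $\bar\theta_{d,r}^*\mathcal F_r$). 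The specialization $r=0$ is then immediate since $\mathring Q_{d,0}=Q_{d,0}$. Where the details are routine diagram chases that parallel computations in \cite{Shao} and \cite{Str87}, I would cite those and only spell out the connecting-homomorphism identification and the base-change verification.
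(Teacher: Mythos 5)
Your proposal is correct and follows essentially the same route as the paper: identify the tangent bundles via Proposition \ref{tangentSheaf}, chase the diagram obtained by applying $\pi_*\HHom$ to the rows and columns of (\ref{bigDiagram}), and realize the cokernel of $d\mathring{\phi}_{d,r}$ as $\pi_*\EExt^1(\mathcal{T}_{d,r},\bar\theta_{d,r}^*\mathcal{F}_r)$ via the long exact sequence for $\HHom(-,\bar\theta_{d,r}^*\mathcal{F}_r)$ applied to the universal sequence of $Q_{d,r}$. The paper organizes this as a $3\times 3$ sheaf-level diagram rather than a fiberwise computation, but the content is the same.
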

\begin{proof}
Since $\mathcal{N}_{\mathring{Q}_{d,r}/Q_d}$ is locally free, we have the following exact sequence of sheaves on $\mathring{Q}_{d,r}$:
\[
0\to \mathcal{T}_{\mathring{Q}_{d,r}}\to \mathring{\phi}_{d,r}^*\mathcal{T}_{Q_d} \to \mathcal{N}_{\mathring{Q}_{d,r}/Q_d}\to 0
\]
Note that the restriction of the exact sequence (\ref{relativeTangent}) to $\mathring{Q}_{d,r}$ gives an exact sequence
\[
0\to \mathcal{T}_{\mathring{Q}_{d,r}/\mathring{Q}_r}\to \mathcal{T}_{\mathring{Q}_{d,r}}\to \mathring{\theta}_{d,r}^*\mathcal{T}_{Q_r} \to 0.
\]
Combining the above two sequences, we can form a commutative diagram of sheaves on $\mathring{Q}_{d,r}$:
\[
\CD{
       &                                      & 0\ar[d]                              & 0\ar[d]                             & \\
0\ar[r]&\mathcal{T}_{\mathring{Q}_{d,r}/\mathring{Q}_r}\ar[r]\ar@{=}[d]& \mathcal{T}_{\mathring{Q}_{d,r}}\ar[r]\ar[d] & \mathring{\theta}_{d,r}^*\mathcal{T}_{Q_r}\ar[r]\ar@{..>}[d]     & 0 \\
0\ar[r]&\mathcal{T}_{\mathring{Q}_{d,r}/\mathring{Q}_r}\ar[r]   &\mathring{\phi}_{d,r}^*\mathcal{T}_{Q_d}\ar[r]\ar[d]    & \pi_*\HHom(\mathcal{E}_{d,r},\bar\theta_{d,r}^*\mathcal{F}_r)|_{\mathring{Q}_{d,r}}\ar[r]\ar@{..>}[d]     & 0  \\
       &                                     &\mathcal{N}_{\mathring{Q}_{d,r}/Q_d}\ar@{=}[r]\ar[d]& \mathcal{N}_{\mathring{Q}_{d,r}/Q_d}\ar[d]& \\
       &                                     &0                                               & 0                                   & \\
}
\]
where the middle row comes from the natural identifications
\[
\mathcal{T}_{\mathring{Q}_{d,r}/\mathring{Q}_r} = \pi_*\HHom(\mathcal{E}_{d,r},\mathcal{T}_{d,r}) |_{\mathring{Q}_{d,r}},\quad \mathring{\phi}_{d,r}^*\mathcal{T}_{Q_d} = \mathring{\phi}_{d,r}^*\pi_* \HHom(\mathcal{E}_d,\mathcal{F}_d)=\pi_*\HHom(\mathcal{E}_{d,r},\mathcal{F}_{d,r})|_{\mathring{Q}_{d,r}}
\]
by Proposition \ref{tangentSheaf} and the exact sequence
\[
0\to \pi_*\HHom(\mathcal{E}_{d,r},\mathcal{T}_{d,r})\to \pi_*\HHom(\mathcal{E}_{d,r},\mathcal{F}_{d,r}) \to \pi_*\HHom(\mathcal{E}_{d,r},\bar\theta_{d,r}^*\mathcal{F}_r)\to 0
\]
obtained by applying $\pi_*\HHom(\mathcal{E}_{d,r},-)$ to the third column of the { diagram (\ref{bigDiagram})}.
The dotted arrows in the third column are induced maps on the quotients. Since all rows and the middle column are exact,
the third column is forced to be exact as well.

%
Using the identification
\[
\mathring{\theta}_{d,r}^*\mathcal{T}_{Q_r}= \mathring{\theta}_{d,r}^*\pi_*\HHom(\mathcal{E}_r,\mathcal{F}_r)= \pi_*\HHom(\bar\theta_{d,r}^*\mathcal{E}_r,\bar\theta_{d,r}^*\mathcal{F}_r)|_{\mathring{Q}_{d,r}}
\]
and comparing the third column with the short exact sequence
\[
0\to \pi_*\HHom(\bar\theta_{d,r}^*\mathcal{E}_r,\bar\theta_{d,r}^*\mathcal{F}_r)\to \pi_*\HHom(\mathcal{E}_{d,r},\bar\theta_{d,r}^*\mathcal{F}_r) \to \pi_*\EExt^1(\mathcal{T}_{d,r},\bar\theta_{d,r}^*\mathcal{F}_r)\to 0
\]
obtained by applying $\pi_*\HHom(-,\bar\theta_{d,0}^*\mathcal{F}_0)$ to the exact sequence {(\ref{USES-Q(d,r)})}, we obtain a natural identification
\[
\mathcal{N}_{\mathring{Q}_{d,r}/Q_d}= \pi_*\EExt^1(\mathcal{T}_{d,r},\bar\theta_{d,r}^*\mathcal{F}_r) |_{\mathring{Q}_{d,r}}.
\]
%
%
%
\end{proof}

\section{The normal bundles: the general case}

In this technical section, we introduce and  analyze the properties of a set of auxiliary schemes. These will be used in the final section to identify with
and to derive the desired modular properties of the exceptional divisors created in the sequence of blowups $\widetilde Q_d \to Q_d$.

\subsection{The schemes $Q_{d,r,l}$ and morphisms $ \psi_{d,\ul{r},l}$}
Let $d>r>l\geq 0$, and we consider $Q_{d,r,l}:=Q_{d,r}\times_{Q_r}Q_{r,l}$. Let $\psi_{d,r,\ul{l}}: Q_{d,r,l}\to Q_{d,r}$ and $\psi_{\ul{d},r,l}: Q_{d,r,l}\to Q_{r,l}$ be the two projections. {(Here the underscored subscript $\ul{l}$ in the map $\psi_{d,r,\ul{l}}$ indicates that the subscript $l$ shows up in the source ($Q_{d,r,l}$) but not in the target ($Q_{d,r}$). Ditto for $\ul{d}$, and for $\ul{r}$ below.)} First of all, we see that $Q_{d,r,l}$ is smooth over $Q_{r,l}$ because $Q_{d,r}$ is smooth over $Q_r$. It follows that $Q_{d,r,l}$ is a nonsingular variety. Next, we will define a {finite} morphism $\psi_{d,\ul{r},l}:Q_{d,r,l}\to Q_{d,l}$ based on { the morphisms} in the following diagram
\begin{equation}\label{hexagon}
\CDR{
& Q_r \ar@{}[dd]|-{\textcircled{1}} & \\
Q_{r,l}\ar[ur]^{\phi_{r,l}}\ar@{->>}[dd]_{\theta_{r,l}} & & Q_{d,r}\ar@{->>}[ul]_{\theta_{d,r}}\ar[dd]^{\phi_{d,r}} \\
& Q_{d,r,l}\ar@{->>}[ul]^{\psi_{\ul{d},r,l}}\ar[ur]_{\psi_{d,r,\ul{l}}}\ar@{..>}[dd]^{\psi_{d,\ul{r},l}} & \\
Q_l \ar@{}[ur]|-{\textcircled{2}} & & Q_d \ar@{}[ul]|-{\textcircled{3}} \\
& Q_{d,l}\ar@{->>}[ul]^{\theta_{d,l}}\ar[ur]_{\phi_{d,l}} &
}
\end{equation}
The parallelogram \textcircled{1} is commutative by the definition of $Q_{d,r,l}$. On $\mathbb{P}^r\times Q_{d,r,l}$ we have two short exact sequences:
\[
\begin{split}
& 0\to \bar\psi_{\ul{d},r,l}^*\mathcal{E}_{r,l} \to \bar\psi_{\ul{d},r,l}^*\bar\theta_{r,l}^*\mathcal{E}_l\to \bar\psi_{\ul{d},r,l}^*\mathcal{T}_{r,l}\to 0 \\
& 0\to \bar\psi_{d,r,\ul{l}}^*\mathcal{E}_{d,r} \to \bar\psi_{d,r,\ul{l}}^*\bar\theta_{d,r}^*\mathcal{E}_r\to \bar\psi_{d,r,\ul{l}}^*\mathcal{T}_{d,r}\to 0
\end{split}
\]
which are pullbacks of the universal exact sequences of $Q_{r,l}$ and $Q_{d,r}$ via $\bar\psi_{\ul{d},r,l}$ and $\bar\psi_{d,r,\ul{l}}$ respectively. Note that $\bar\psi_{d,r,\ul{l}}^*\bar\theta_{d,r}^*\mathcal{E}_r=\bar\psi_{\ul{d},r,l}^*\bar\phi_{r,l}^*\mathcal{E}_r=\bar\psi_{\ul{d},r,l}\mathcal{E}_{r,l}$. Putting the two sequences together, we can form a commutative diagram as follows:
\begin{equation}\label{Diagram-Q(d,r,l)}
\CD{
       &  & 0\ar[d]  & 0\ar[d]  & \\
0\ar[r]&\bar\psi_{d,r,\ul{l}}^*\mathcal{E}_{d,r}\ar[r]\ar@{=}[d]& \bar\psi_{\ul{d},r,l}^*\mathcal{E}_{r,l}\ar[r]\ar[d] & \bar\psi_{d,r,\ul{l}}^*\mathcal{T}_{d,r}\ar[r]\ar@{..>}[d]     & 0 \\
0\ar[r]&\bar\psi_{d,r,\ul{l}}^*\mathcal{E}_{d,r}\ar[r]   &\bar\psi_{\ul{d},r,l}^*\bar\theta_{r,l}^*\mathcal{E}_l\ar[r]\ar[d]    & \mathcal{T}\ar[r]\ar@{..>}[d]     & 0  \\
       &  &\bar\psi_{\ul{d},r,l}^*\mathcal{T}_{r,l}\ar@{=}[r]\ar[d]& \bar\psi_{\ul{d},r,l}^*\mathcal{T}_{r,l}\ar[d]& \\
       &  &0  & 0  & \\
}
\end{equation}
where $\mathcal{T}=\Coker(\bar\psi_{d,r,\ul{l}}^*\mathcal{E}_{d,r}\to\bar\psi_{\ul{d},r,l}^*\mathcal{E}_{r,l} \to\bar\psi_{\ul{d},r,l}^*\bar\theta_{r,l}^*\mathcal{E}_l)$, and the dotted arrows are the induced maps on quotients. Since the upper two rows and the middle column are exact, the last column are forced to be exact as well. Note that $\bar\psi_{d,r,\ul{l}}^*\mathcal{T}_{d,r}$ and $\bar\psi_{\ul{d},r,l}^*\mathcal{T}_{r,l}$ are both flat over $Q_{d,r,l}$ with rank 0 but with relative degree $d-r$ and $r-l$ respectively. Hence $\mathcal{T}$ is flat over $Q_{d,r,l}$ as well and is of rank 0 and of relative degree $d-l$. Thus the quotient $\bar\psi_{\ul{d},r,l}^*\bar\theta_{r,l}^*\mathcal{E}_l\surj \mathcal{T}$ from the middle row determines a $Q_l$-morphism
\[
\psi_{d,\ul{r},l}: Q_{d,r,l}\to Q_{d,l}
\]
such that the pullback of the universal exact sequence of $Q_{d,l}$ is the middle row of (\ref{Diagram-Q(d,r,l)}), i.e., we have the following identifications:
\begin{equation}\label{Identification-psi}
\CD{
0\ar[r]&\bar\psi_{d,r,\ul{l}}^*\mathcal{E}_{d,r}\ar[r]\ar@{=}[d]   &\bar\psi_{\ul{d},r,l}^*\bar\theta_{r,l}^*\mathcal{E}_l\ar[r]\ar@{=}[d]    & \mathcal{T}\ar[r]\ar@{=}[d]     & 0 \\
 0\ar[r] & \bar\psi_{d,\ul{r},l}^*\mathcal{E}_{d,l}\ar[r] & \bar\psi_{d,\ul{r},l}^*\bar\theta_{d,l}^*\mathcal{E}_l\ar[r] & \bar\psi_{d,\ul{r},l}^*\mathcal{T}_{d,l}\ar[r] & 0 \\
}
\end{equation}
In particular, the identification $\mathcal{T}=\bar\psi_{d,\ul{r},l}^*\mathcal{T}_{d,l}$ allows us to rewrite the third column of the diagram (\ref{Diagram-Q(d,r,l)}) as
\begin{equation}\label{SES-Torsions}
0\to\bar\psi_{d,r,\ul{l}}^*\mathcal{T}_{d,r}\to \bar\psi_{d,\ul{r},l}^*\mathcal{T}_{d,l}\to \bar\psi_{\ul{d},r,l}^*\mathcal{T}_{r,l}\to 0
\end{equation}
By the definition of $\psi_{d,\ul{r},l}$, { the parallelogram \textcircled{2}} in the diagram (\ref{hexagon}) automatically commutes. We also have
\begin{prop}
The parallelogram \textcircled{3} in diagram (\ref{hexagon}) commutes.
\end{prop}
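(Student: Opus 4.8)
The plan is to verify the identity $\phi_{d,l}\circ\psi_{d,\underline{r},l}=\phi_{d,r}\circ\psi_{d,r,\underline{l}}$ as morphisms $Q_{d,r,l}\to Q_d$ by using the universal property of the Quot scheme $Q_d$. Both composites are morphisms into $Q_d$, so it suffices to show that the two quotients of $V_{\mathbb{P}^1\times Q_{d,r,l}}$ they classify are identified (i.e., agree after pulling back the respective universal sequences). Concretely, I would produce on $\mathbb{P}^1\times Q_{d,r,l}$ a single short exact sequence $0\to\mathcal{E}\to V_{\mathbb{P}^1\times Q_{d,r,l}}\to\mathcal{F}\to 0$ together with canonical identifications of it with the pullback of the universal sequence of $Q_d$ along each of the two composites; the uniqueness clause in the universal property of $Q_d$ then forces the two composites to coincide.

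The key steps, in order: First, recall from diagram (\ref{DiagramQdr}) that $\phi_{d,r}$ is characterized by $\bar\phi_{d,r}^*(\mathcal{E}_d\hookrightarrow V\twoheadrightarrow\mathcal{F}_d)\simeq(\mathcal{E}_{d,r}\hookrightarrow V\twoheadrightarrow\mathcal{F}_{d,r})$, and similarly for $\phi_{d,l}$ with the subsheaf $\mathcal{E}_{d,l}\subset V_{\mathbb{P}^1\times Q_{d,l}}$. Pulling these back to $Q_{d,r,l}$, I get that the subbundle of $V_{\mathbb{P}^1\times Q_{d,r,l}}$ classified by $\phi_{d,r}\circ\psi_{d,r,\underline{l}}$ is $\bar\psi_{d,r,\underline{l}}^*\mathcal{E}_{d,r}$ sitting inside $V$ via the composite $\bar\psi_{d,r,\underline{l}}^*\mathcal{E}_{d,r}\to\bar\psi_{d,r,\underline{l}}^*\bar\theta_{d,r}^*\mathcal{E}_r\to V$, while the subbundle classified by $\phi_{d,l}\circ\psi_{d,\underline{r},l}$ is $\bar\psi_{d,\underline{r},l}^*\mathcal{E}_{d,l}$ inside $V$ via $\bar\psi_{d,\underline{r},l}^*\mathcal{E}_{d,l}\to\bar\psi_{d,\underline{r},l}^*\bar\theta_{d,l}^*\mathcal{E}_l\to V$. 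Second, by the identifications (\ref{Identification-psi}) we have $\bar\psi_{d,\underline{r},l}^*\mathcal{E}_{d,l}=\bar\psi_{d,r,\underline{l}}^*\mathcal{E}_{d,r}$ and $\bar\psi_{d,\underline{r},l}^*\bar\theta_{d,l}^*\mathcal{E}_l=\bar\psi_{\underline{d},r,l}^*\bar\theta_{r,l}^*\mathcal{E}_l$, compatibly with the inclusions into $\bar\psi_{\underline{d},r,l}^*\mathcal{E}_{r,l}$. Third, I must check that the two embeddings of this common subsheaf into $V_{\mathbb{P}^1\times Q_{d,r,l}}$ agree: the one factoring through $\bar\psi_{d,r,\underline{l}}^*\bar\theta_{d,r}^*\mathcal{E}_r=\bar\psi_{\underline{d},r,l}^*\mathcal{E}_{r,l}\hookrightarrow V$ (the last arrow coming from pulling back the middle row of (\ref{bigDiagram}) for $Q_{r,l}$, i.e. $\mathcal{E}_{r,l}\hookrightarrow\bar\theta_{r,l}^*\mathcal{E}_l\hookrightarrow V$), and the one factoring through $\bar\psi_{d,\underline{r},l}^*\bar\theta_{d,l}^*\mathcal{E}_l=\bar\psi_{\underline{d},r,l}^*\bar\theta_{r,l}^*\mathcal{E}_l\hookrightarrow V$ (the last arrow being $\bar\phi_{r,l}^*$ of $\mathcal{E}_{r,l}\hookrightarrow V$, or directly the pullback of the inclusion $\bar\theta_{r,l}^*\mathcal{E}_l\hookrightarrow V$). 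These coincide because both are the pullback via $\bar\psi_{\underline{d},r,l}$ of the composite $\mathcal{E}_{r,l}\hookrightarrow\bar\theta_{r,l}^*\mathcal{E}_l\to V_{\mathbb{P}^1\times Q_{r,l}}$, where the second map is itself the pullback of the universal inclusion $\mathcal{E}_l\hookrightarrow V$ on $Q_l$ — this is exactly the content of diagram (\ref{bigDiagram}) applied with parameters $(r,l)$ in place of $(d,r)$, which says $\mathcal{E}_{r,l}\hookrightarrow V$ (defining $\mathcal{F}_{r,l}$) factors as $\mathcal{E}_{r,l}\to\bar\theta_{r,l}^*\mathcal{E}_r\to V$. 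Fourth and last, once the subsheaves agree inside $V$, their cokernels agree, and tracing through the chain of identifications (which are all canonical, coming from flat base change and the cited universal properties) yields an isomorphism of the pulled-back universal sequences; invoking uniqueness in the universal property of $Q_d$ completes the proof.

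I expect the main obstacle to be the bookkeeping in step three: one must unwind several layers of pullback notation and confirm that the two a priori different routes by which the common bundle $\bar\psi_{d,r,\underline{l}}^*\mathcal{E}_{d,r}$ lands in $V_{\mathbb{P}^1\times Q_{d,r,l}}$ are literally the same morphism, not merely abstractly isomorphic sources. The conceptual point making this work is that all the relevant inclusions into the trivial bundle $V$ are, at bottom, pullbacks of the single universal inclusion $\mathcal{E}_l\hookrightarrow V_{\mathbb{P}^1\times Q_l}$ on the base $Q_l$, and the diagram (\ref{hexagon}) commutes over $Q_l$ by construction (parallelograms \textcircled{1} and \textcircled{2} already commute); so compatibility over $Q_l$ propagates the needed equality. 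I would phrase the argument so as to reduce everything to the commutativity of \textcircled{2} together with diagram (\ref{bigDiagram}) for $(r,l)$, and then cite the uniqueness in the universal property of $Q_d$ (as used to define $\phi_{d,r}$ via (\ref{DiagramQdr})) for the final step.
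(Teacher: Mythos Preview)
Your proposal is correct and follows essentially the same route as the paper: both invoke the universal property of $Q_d$, pull back the sequences from diagram (\ref{DiagramQdr}) along the two composites, and use the identification $\bar\psi_{d,r,\ul{l}}^*\mathcal{E}_{d,r}=\bar\psi_{d,\ul{r},l}^*\mathcal{E}_{d,l}$ from (\ref{Identification-psi}) to match the kernels inside $V_{\mathbb{P}^1\times Q_{d,r,l}}$ and hence the cokernels. The paper is terser at your step three, simply asserting the relevant commutative square with equals signs, whereas you spell out why the two inclusions into $V$ genuinely coincide; your extra care there is warranted but not a different argument.
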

\begin{proof}
We need to show $\phi_{d,r}\psi_{d,r,\ul{l}}=\phi_{d,l}\psi_{d,\ul{r},l}$. Because both $\phi_{d,r}\psi_{d,r,\ul{l}}$ and $\phi_{d,l}\psi_{d,\ul{r},l}$ map into $Q_d$, by the universal property of the Quot scheme $Q_d$ (see \cite{Shao}, Theorem 2.1), it suffices to show that there is an isomorphism $(\overline{\phi_{d,r}\psi_{d,r,\ul{l}}})^*\mathcal{F}_d\simeq (\overline{\phi_{d,l}\psi_{d,\ul{r},l}})^*\mathcal{F}_d$ that makes the following diagram commute:
\[
\CD{
V_{\mathbb{P}^1\times Q_{d,r,l}}\ar@{=}[r]\ar@{=}[d] & (\overline{\phi_{d,r}\psi_{d,r,\ul{l}}})^* V_{\mathbb{P}^1\times Q_d}\ar[r] & (\overline{\phi_{d,r}\psi_{d,r,\ul{l}}})^*\mathcal{F}_d\ar[d]^{\simeq} \\
V_{\mathbb{P}^1\times Q_{d,r,l}}\ar@{=}[r] & (\overline{\phi_{d,l}\psi_{d,\ul{r},l}})^* V_{\mathbb{P}^1\times Q_d}\ar[r] & (\overline{\phi_{d,l}\psi_{d,\ul{r},l}})^*\mathcal{F}_d
}
\]
By the diagram (\ref{DiagramQdr}), we have an exact sequence
\[
0\to \mathcal{E}_{d,r}\to V_{\mathbb{P}^1\times Q_{d,r}}\to \bar\phi_{d,r}^*\mathcal{F}_d\to 0
\]
Replacing $r$ with $l$, we obtain another one
\[
0\to \mathcal{E}_{d,l}\to V_{\mathbb{P}^1\times Q_{d,l}}\to \bar\phi_{d,l}^*\mathcal{F}_d\to 0
\]
Applying $\bar\psi_{d,r,\ul{l}}^*$ and $\bar\psi_{d,\ul{r},l}^*$ to the above two exact sequences respectively, we obtain two exact sequences
\[
\begin{split}
& 0\to \bar\psi_{d,r,\ul{l}}^*\mathcal{E}_{d,r}\to V_{\mathbb{P}^1\times Q_{d,r,l}}\to \bar\psi_{d,r,\ul{l}}^*\bar\phi_{d,r}^*\mathcal{F}_d\to 0 \\
& 0\to \bar\psi_{d,\ul{r},l}^*\mathcal{E}_{d,l}\to V_{\mathbb{P}^1\times Q_{d,r,l}}\to \bar\psi_{d,\ul{r},l}^*\bar\phi_{d,l}^*\mathcal{F}_d\to 0
\end{split}
\]
Using the identification $\bar\psi_{d,r,\ul{l}}^*\mathcal{E}_{d,r}=\bar\psi_{d,\ul{r},l}^*\mathcal{E}_{d,l}$ from the diagram (\ref{Identification-psi}), we obtain a commutative diagram
\[
\CD{
0\ar[r] & \bar\psi_{d,r,\ul{l}}^*\mathcal{E}_{d,r}\ar[r]\ar@{=}[d] & V_{\mathbb{P}^1\times Q_{d,r,l}}\ar[r]\ar@{=}[d] & \bar\psi_{d,r,\ul{l}}^*\bar\phi_{d,r}^*\mathcal{F}_d\ar[r]\ar@{..>}[d]^{\simeq} & 0 \\
0\ar[r] & \bar\psi_{d,\ul{r},l}^*\mathcal{E}_{d,l}\ar[r] & V_{\mathbb{P}^1\times Q_{d,r,l}}\ar[r] & \bar\psi_{d,\ul{r},l}^*\bar\phi_{d,l}^*\mathcal{F}_d\ar[r] & 0
}
\]
which induces an isomorphism $\bar\psi_{d,r,\ul{l}}^*\bar\phi_{d,r}^*\mathcal{F}_d\simeq \bar\psi_{d,\ul{r},l}^*\bar\phi_{d,l}^*\mathcal{F}_d$ in the third column. This isomorphism is the desired one.
\end{proof}

The map $\psi_{d,\ul{r},l}$ is both proper and quasi-finite, hence it is a finite morphism. 

\begin{notation}
For any integer $m\geq 1$, let $\Sigma_m$ be the set of all strictly decreasing sequences of nonnegative integers of length $m$:
\[
\Sigma_m=\{(r_1,\cdots,r_m)\,|\,r_1>\cdots> r_m\}
\]
and let $\Sigma$ be the set of all strictly decreasing sequences of nonnegative integers of any finite length:
\[
\Sigma=\bigcup_{m=1}^\infty\Sigma_m
\]
For any sequence $\sigma=(r_1,\cdots,r_m)\in \Sigma$, the first term $r_1$ is called the leading term and is denoted as $\lt(\sigma)$. For any integer $r>\lt(\sigma)$, by $(r,\sigma)$ we mean the new sequence $(r,r_1,\cdots,r_m)$. The length of a sequence $\sigma$ is denoted by $|\sigma|$.
\end{notation}
{When we use a sequence of $\Sigma$ as sub-index, we would omit the parentheses. For example, if $\sigma=(8,5,3,1,0)$, $\tau=(5,3,1,0)$ and $\lambda=(3,1,0)$, then the notations $Q_\sigma$, $Q_{8,\tau}$ and $Q_{8,5,\lambda}$ all mean the same thing: $Q_{8,5,3,1,0}$.}

\subsection{The schemes $P_\sigma$ and their properties}
We { now introduce a set of spaces $P_\sigma$ together with a set of coherent sheaves $\mathcal{X}_\sigma$ on $\mathbb{P}^1\times P_\sigma$, indexed by} $\sigma\in\Sigma$. We will need a set of auxiliary spaces $R_\sigma$ indexed by $\sigma\in\Sigma$ with $|\sigma|\geq 2$. First, we define $P_\sigma$ for $\sigma\in \Sigma_1$. Suppose $\sigma=(d)$. In this case, we set
\[
P_\sigma=P_d:=Q_d,\quad \mathring{P}_\sigma=\mathring{P}_d:=\mathring{Q}_d,\quad \mathcal{X}_\sigma=\mathcal{X}_d:=\mathcal{F}_d
\]
We will denote by $\mathring{\mathcal{X}}_d$ the restriction of $\mathcal{X}_d$ on $\mathbb{P}^1\times\mathring{P}_d$. We know that $\mathring{\mathcal{X}}_d$ is locally free.

Next we define $P_\sigma$ for $\sigma\in \Sigma_2$. Suppose $\sigma=(d,r)$, $d>r$. We set
\[
R_{d,r}:=Q_{d,r}\times_{Q_r}P_r,\quad \mathring{R}_{d,r}:=Q_{d,r}\times_{Q_r}\mathring{P}_r\subset R_{d,r}
\]
Since $P_r=Q_r$ and $\mathring{P}_r=\mathring{Q}_r$, we actually have $R_{d,r}=Q_{d,r}$ and $\mathring{R}_{d,r}=\mathring{Q}_{d,r}$. Let $q_{d,r}$ and $p_{d,r}$ be the projections from $R_{d,r}$ to $Q_{d,r}$ and to $P_r$, respectively, and let $\mathring{q}_{d,r}$ and $\mathring{p}_{d,r}$ be the projections from $\mathring{R}_{d,r}$ to $Q_{d,r}$ and to $\mathring{P}_r$, respectively. We define $P_{d,r}$ to be a (relative) space of non-split extensions:
\[
P_{d,r}=\mathbb{P}(\pi_*\EExt^1({\bar{\mathring{q}}}_{d,r}^*\mathcal{T}_{d,r},{\bar{\mathring{p}}}_{d,r}^*\mathring{\mathcal{X}}_r))
\]
and let $a_{d,r}: P_{d,r}\to \mathring{R}_{d,r}$ be the structure morphism. We denote the universal extension on $\mathbb{P}^1\times P_{d,r}$ by
\[
0\to (\bar a_{d,r}^*{\bar{\mathring{p}}}_{d,r}^*\mathring{\mathcal{X}}_r)(0,1)\to \mathcal{X}_{d,r}\to \bar a_{d,r}^*{\bar {\mathring{q}}}_{d,r}^*\mathcal{T}_{d,r}\to 0
\]
Let $\mathring{P}_{d,r}\subset P_{d,r}$ be the open subset defined as
\[
\mathring{P}_{d,r}:=\{x\in P_{d,r}\,|\,(\mathcal{X}_{d,r})_x\text{ is locally free}\}
\]
We denote by $\mathring{\mathcal{X}}_{d,r}$ the restriction of $\mathcal{X}_{d,r}$ to $\mathbb{P}^1\times \mathring{P}_{d,r}$. Then $\mathring{\mathcal{X}}_{d,r}$ is locally free, and $P_{d,r}$ can be considered as a $Q_d$-scheme through the composition $P_{d,r}\to \mathring{R}_{d,r}\to Q_{d,r}\to Q_d$. So we have defined $P_\sigma$, $\mathcal{X}_\sigma$, $R_\sigma$, etc., for any $\sigma\in \Sigma$ with $|\sigma|=2$. In the following, we will define $P_\sigma$, $\mathcal{X}_\sigma$, etc., for any $\sigma\in\Sigma$ with $|\sigma|\geq 3$ inductively.



Assume that, for each $\sigma\in\Sigma_m$ for some $m\geq 2$, the space $P_\sigma$ of non-split extensions is defined and the sheaf $\mathcal{X}_\sigma$ is the middle term from the universal extension on $\mathbb{P}^1\times P_\sigma$ . Assume also that a morphism $P_\sigma\to Q_{l}$ ($l=\lt(\sigma)$) has been specified so that $P_\sigma$ can be considered as a $Q_l$-scheme.

Let $\sigma\in\Sigma_{m+1}$, $d=\lt(\sigma)$, $\tau$ be the sequence formed from $\sigma$ by removing the leading term $d$, and $r=\lt(\tau)$. So $\sigma=(d,\tau)=(d,r,\cdots)$. By induction hypothesis, the space $P_\tau$ of non-split extensions is defined and is a $Q_r$-scheme. We set
\[
R_\sigma:=Q_{d,r}\times_{Q_r}P_\tau,\quad \mathring{R}_\sigma:=Q_{d,r}\times_{Q_r}\mathring{P}_\tau\subset R_\sigma
\]
Let $q_\sigma$ and $p_\sigma$ be the two projections from $R_\sigma$ to $Q_{d,r}$ and to $P_\tau$, respectively, and let $\mathring q_\sigma$ and $\mathring p_\sigma$ be the two projections from $\mathring{R}_\sigma$ to $Q_{d,r}$ and to $\mathring{P}_\tau$, respectively. We define $P_\sigma=P_{d,\tau}$ to be a space of non-split extensions over $\mathring{R}_\sigma$ by
\[
P_\sigma:=\mathbb{P}(\pi_*\EExt^1({\bar{\mathring{q}}}_\sigma^*\mathcal{T}_{d,r},{\bar{\mathring{p}}}_\sigma^*\mathring{\mathcal{X}}_\tau))
\]
and let $a_\sigma: P_\sigma\to \mathring{R}_\sigma$ be the structure morphism. We denote the universal extension on $\mathbb{P}^1\times P_\sigma$ by
\[
0\to (\bar a_\sigma^*{\bar{\mathring p}}_\sigma^*\mathring{\mathcal{X}}_\tau)(0,1)\to \mathcal{X}_\sigma\to \bar a_\sigma^*{\bar{\mathring q}}_\sigma^*\mathcal{T}_{d,r}\to 0
\]
$P_\sigma$ can be considered as a $Q_d$-scheme through $P_\sigma\to \mathring{R}_\sigma\to Q_{d,r}\to Q_d$. We define the open subset $\mathring{P}_\sigma\subset P_\sigma$ as
\[
\mathring{P}_\sigma:=\{x\in P_\sigma\,|\,(\mathcal{X}_\sigma)_x\text{ is locally free}\}
\]
Then, $\mathring{\mathcal{X}}_\sigma$, the restriction of $\mathcal{X}_\sigma$ on $\mathbb{P}^1\times \mathring{P}_\sigma$ is locally free. By induction, we have defined $P_\sigma$, $\mathcal{X}_\sigma$, $R_\sigma$, etc., for all $\sigma\in\Sigma$.

\begin{lemma}
For each $\sigma$,
\begin{enumerate}
\item $\mathcal{X}_\sigma$ is flat over $\mathbb{P}_\sigma$ with relative degree $\lt(\sigma)$.
\item $R^1\pi_*(\mathcal{X}_\sigma(-1))=0$.
\end{enumerate}
\end{lemma}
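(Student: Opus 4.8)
The plan is to proceed by induction on $|\sigma|$ in lockstep with the inductive construction of the $P_\sigma$. Both statements are trivially settled for $\sigma\in\Sigma_1$: in that case $\mathcal{X}_\sigma=\mathcal{F}_d$, which is flat over $P_d=Q_d$ with relative degree $d=\lt(\sigma)$ by construction of the Quot scheme, and $R^1\pi_*(\mathcal{F}_d(-1))=0$ because on each fiber $\mathcal{F}_d$ restricts to a quotient $V_{\mathbb{P}^1}\surj F$ with $F$ globally generated of degree $\geq 0$ (indeed, the surjection from the trivial bundle forces $H^1(\mathbb{P}^1,F(-1))=0$), and we invoke Cohomology and Base Change to pass from the fiberwise vanishing to the vanishing of the higher direct image.

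For the inductive step, suppose $\sigma=(d,\tau)$ with $r=\lt(\tau)$, and that the statement holds for $\tau$. The sheaf $\mathcal{X}_\sigma$ sits in the universal extension
\[
0\to (\bar a_\sigma^*\bar{\mathring p}_\sigma^*\mathring{\mathcal{X}}_\tau)(0,1)\to \mathcal{X}_\sigma\to \bar a_\sigma^*\bar{\mathring q}_\sigma^*\mathcal{T}_{d,r}\to 0
\]
on $\mathbb{P}^1\times P_\sigma$. For (1): flatness over $P_\sigma$ follows because the two outer terms are flat over $P_\sigma$ — the term $\bar a_\sigma^*\bar{\mathring q}_\sigma^*\mathcal{T}_{d,r}$ is a pullback of $\mathcal{T}_{d,r}$, which is flat over $Q_{d,r}$ with relative degree $d-r$, and the term $(\bar a_\sigma^*\bar{\mathring p}_\sigma^*\mathring{\mathcal{X}}_\tau)(0,1)$ is a pullback (twisted by a line bundle from the base) of $\mathring{\mathcal{X}}_\tau$, which is flat over $\mathring P_\tau$ with relative degree $r$ by the induction hypothesis — and an extension of flat sheaves is flat. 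The relative degree is additive in short exact sequences, so it equals $(d-r)+r=d=\lt(\sigma)$, as desired. One should also note the twist $(0,1)$ by a line bundle pulled back from $P_\sigma$ does not change the relative degree over $P_\sigma$.

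For (2): twist the universal extension by $p^*\mathcal{O}_{\mathbb{P}^1}(-1)$ and apply $\pi_* = \pi_{P_\sigma *}$, yielding the exact piece
\[
R^1\pi_*\big((\bar a_\sigma^*\bar{\mathring p}_\sigma^*\mathring{\mathcal{X}}_\tau)(-1,1)\big)\to R^1\pi_*(\mathcal{X}_\sigma(-1))\to R^1\pi_*\big((\bar a_\sigma^*\bar{\mathring q}_\sigma^*\mathcal{T}_{d,r})(-1)\big).
\]
The last term vanishes because $\mathcal{T}_{d,r}$ is torsion, so its twist is supported in relative dimension $0$ and has no higher cohomology along the fibers; by base change the higher direct image vanishes. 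For the first term, the twist by $\mathcal{O}_{P_\sigma}(1)$ from the base commutes with $R^1\pi_*$ (projection formula), so it suffices to see $R^1\pi_*\big(\bar a_\sigma^*\bar{\mathring p}_\sigma^*(\mathring{\mathcal{X}}_\tau(-1))\big)=0$; since $a_\sigma$ and $\mathring p_\sigma$ are flat and $R^1\pi_*$ commutes with flat base change (applied to the cartesian squares defining $R_\sigma$ and the projective bundle $P_\sigma$), this reduces to $R^1\pi_*(\mathring{\mathcal{X}}_\tau(-1))=0$, which is the induction hypothesis for $\tau$. Hence the middle term vanishes too.

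The main obstacle, such as it is, lies in bookkeeping: one must keep careful track of which twists $(m,n)$ live on $\mathbb{P}^1$ versus on the (varying) base, confirm that each projection appearing ($a_\sigma$, $\mathring p_\sigma$, $\mathring q_\sigma$, and the structure maps of the fiber products $R_\sigma$) is flat so that base-change for $\pi_*$ and $R^1\pi_*$ applies, and verify that $\mathring{\mathcal{X}}_\tau$ rather than $\mathcal{X}_\tau$ is what appears in the universal extension (it is, by construction, the restriction over $\mathring P_\tau$, so the induction hypothesis for $\tau$ — stated for $\mathcal{X}_\tau$ and hence a fortiori for its restriction $\mathring{\mathcal{X}}_\tau$ — indeed applies). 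None of these points is deep, but they are the places where an argument could go wrong if one is cavalier.
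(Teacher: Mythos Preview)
Your proof is correct and proceeds by the natural induction on $|\sigma|$ using the universal extension; the paper itself states this lemma without proof, so your argument supplies exactly the routine verification the authors omitted.
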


The closed subset $P_\sigma\setminus \mathring{P}_\sigma$ of $P_\sigma$ has a sequence of nested closed subschemes
\[
\varnothing=Y_{d,r,\tau}\subset Y_{d,r+1,\tau}\subset\cdots\subset Y_{d,d-1,\tau}=P_\sigma\setminus \mathring{P}_\sigma
\]
where $Y_{d,e,\tau}=\{x\in P_\sigma\,|\,\deg((\mathcal{X}_\sigma)_x^\tor)\geq d-e \}$ and the subscheme structure on $Y_{d,e,\tau}$ is defined by (\ref{subscheme}). We set $\mathring{Y}_{d,e,\tau}:=Y_{d,e,\tau}\setminus Y_{d,e-1,\tau}$ for $e=r+1,\cdots,d-1$. That is, $\mathring{Y}_{d,e,\tau}=\{x\in P_\sigma\,|\,\deg((\mathcal{X}_\sigma)_x^\tor)= d-e \}$.

\begin{lemma}
The space $P_\sigma$ is nonsingular for any $\sigma\in\Sigma$.
\end{lemma}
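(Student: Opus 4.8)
The statement is that $P_\sigma$ is nonsingular for every $\sigma\in\Sigma$. My plan is to argue by induction on the length $|\sigma|$, mirroring the inductive construction of the $P_\sigma$. The base case $|\sigma|=1$ is immediate: $P_{(d)}=Q_d$, which is the Quot scheme $\Quot^{n-k,d}_{V_{\mathbb{P}^1}/\mathbb{P}^1/\Bbbk}$, and smoothness of $Q_d$ is classical (it is recalled in Section 4 that $\Mor_d(\mathbb{P}^1,\Gr(k,V))$ is a nonsingular quasi-projective variety and $Q_d$ is a smooth compactification; smoothness of $Q_d$ as a whole follows from the standard unobstructedness computation, cf. Proposition \ref{tangentSheaf} and Strømme). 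For $|\sigma|=2$, say $\sigma=(d,r)$, recall $P_{d,r}=\mathbb{P}(\pi_*\EExt^1(\bar{\mathring q}_{d,r}^*\mathcal{T}_{d,r},\bar{\mathring p}_{d,r}^*\mathring{\mathcal{X}}_r))$ is a projective bundle over $\mathring R_{d,r}=\mathring Q_{d,r}$; since $Q_{d,r}$ is smooth over $Q_r$ and $\mathring Q_r$ is smooth, $\mathring Q_{d,r}$ is nonsingular, and a projective bundle over a nonsingular base is nonsingular.

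For the inductive step, let $\sigma=(d,\tau)$ with $r=\lt(\tau)$, and suppose $P_\tau$ (hence $\mathring P_\tau$) is nonsingular. The key structural observations are: (i) $R_\sigma=Q_{d,r}\times_{Q_r}P_\tau$ and $\mathring R_\sigma=Q_{d,r}\times_{Q_r}\mathring P_\tau$; (ii) $Q_{d,r}\to Q_r$ is a smooth morphism (it is a relative Quot scheme, relatively smooth over $Q_r$, as recalled from \cite{Shao}); (iii) $P_\sigma=\mathbb{P}(\mathcal{E}_\sigma)$ for the locally free sheaf $\mathcal{E}_\sigma=\pi_*\EExt^1(\bar{\mathring q}_\sigma^*\mathcal{T}_{d,r},\bar{\mathring p}_\sigma^*\mathring{\mathcal{X}}_\tau)$ on $\mathring R_\sigma$. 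The plan is then: first, base change smoothness — since $Q_{d,r}\to Q_r$ is smooth and $\mathring P_\tau\to Q_r$ is the structure morphism (with $\mathring P_\tau$ nonsingular), the fiber product $\mathring R_\sigma=Q_{d,r}\times_{Q_r}\mathring P_\tau\to \mathring P_\tau$ is smooth, hence $\mathring R_\sigma$ is nonsingular. Second, invoke that $\mathcal{E}_\sigma$ is locally free on $\mathring R_\sigma$ — this requires that $\mathring{\mathcal{X}}_\tau$ is locally free (so that $\EExt^1(-,\bar{\mathring p}_\sigma^*\mathring{\mathcal{X}}_\tau)$ of the torsion sheaf behaves well), together with the Cohomology-and-Base-Change argument invoked in Section 2.2 (using that $R^1\pi_*(\mathring{\mathcal{X}}_\tau(-1))=0$, which is exactly assumption (\ref{assumption}) in the present relative setting, guaranteed by the preceding Lemma part (2)); this makes $\pi_*\EExt^1$ locally free with the expected fibers $\Ext^1(\mathcal{T}_s,\mathcal{F}_s)$. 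Third, conclude: $P_\sigma=\PProj(\Sym^*\mathcal{E}_\sigma^\vee)$ is a projective bundle over the nonsingular scheme $\mathring R_\sigma$, hence is nonsingular.

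I should be careful about two points of bookkeeping. One is that the structure morphism $P_\tau\to Q_r$ used to form the fiber product is the one assumed in the inductive construction; I need $\mathring P_\tau\to Q_r$ (obtained by restriction) so that the base change is against the smooth morphism $\theta_{d,r}\colon Q_{d,r}\to Q_r$ on the correct factor — note that in $\mathring R_\sigma=Q_{d,r}\times_{Q_r}\mathring P_\tau$ the projection $\mathring p_\sigma\colon \mathring R_\sigma\to \mathring P_\tau$ is the base change of $\theta_{d,r}$, so it is smooth regardless of which factor one calls the "base," and thus $\mathring R_\sigma$ is smooth over the nonsingular $\mathring P_\tau$. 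The second is that $\mathcal{E}_\sigma$ must have \emph{constant} rank on $\mathring R_\sigma$ for $\mathbb{P}(\mathcal{E}_\sigma)$ to be a projective bundle in the honest sense; this is part of what Cohomology and Base Change delivers once we know $\EExt^1(\bar{\mathring q}_\sigma^*\mathcal{T}_{d,r},\bar{\mathring p}_\sigma^*\mathring{\mathcal{X}}_\tau)$ is flat over $\mathring R_\sigma$ with the dimension of $H^0$ of each fiber being $\dim\Ext^1(\mathcal{T}_s,\mathcal{F}_s)$, which is constant by Riemann–Roch on $\mathbb{P}^1$ (the relative degree of $\mathcal{T}_{d,r}$ is the constant $d-r$ and $\mathring{\mathcal{X}}_\tau$ has constant rank $n-k$).

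**Main obstacle.** The routine inductive scaffolding (base change of smooth morphisms, projective bundle over a smooth base) is easy; the one point deserving genuine care is verifying that $\mathcal{E}_\sigma=\pi_*\EExt^1(\bar{\mathring q}_\sigma^*\mathcal{T}_{d,r},\bar{\mathring p}_\sigma^*\mathring{\mathcal{X}}_\tau)$ is locally free of constant rank on $\mathring R_\sigma$, i.e., that the relative-$\Ext$ formalism of Section 2.2 applies verbatim with $S=\mathring R_\sigma$, $\mathcal{T}=\bar{\mathring q}_\sigma^*\mathcal{T}_{d,r}$, $\mathcal{F}=\bar{\mathring p}_\sigma^*\mathring{\mathcal{X}}_\tau$. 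This amounts to checking the two hypotheses of that construction: $\mathcal{F}$ locally free and flat over $S$ (clear, since $\mathring{\mathcal{X}}_\tau$ is locally free and pullback preserves this), $\mathcal{T}$ torsion and flat over $S$ (clear from flatness of $\mathcal{T}_{d,r}$ over $Q_{d,r}$ and flat base change), and the extra assumption (\ref{assumption}) $R^1\pi_*(\mathcal{F}(-1))=0$, which in this instance reads $R^1\pi_*(\bar{\mathring p}_\sigma^*\mathring{\mathcal{X}}_\tau(-1))=0$ — this follows from part (2) of the preceding Lemma ($R^1\pi_*(\mathcal{X}_\tau(-1))=0$) by flat base change along $\mathring p_\sigma$. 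Once these are in place, everything goes through exactly as in Sections 2.2 and 4, and the nonsingularity of $P_\sigma$ follows.
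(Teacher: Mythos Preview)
Your proposal is correct and follows essentially the same approach as the paper's own proof: induction on $|\sigma|$, with base case $P_{(d)}=Q_d$, and inductive step using that $Q_{d,r}\to Q_r$ is smooth so the fiber product $\mathring R_\sigma=Q_{d,r}\times_{Q_r}\mathring P_\tau$ is smooth over the nonsingular $\mathring P_\tau$, whence the projective bundle $P_\sigma$ over $\mathring R_\sigma$ is nonsingular. Your additional care in verifying that $\mathcal{E}_\sigma$ is locally free (so that $\mathbb{P}(\mathcal{E}_\sigma)$ is genuinely a projective bundle) fills in a detail the paper takes for granted; note, however, that assumption~(\ref{assumption}) is not actually needed for this local-freeness step, since $\EExt^1$ of a torsion sheaf is torsion on $\mathbb{P}^1$-fibers and hence has vanishing $H^1$, so Cohomology and Base Change applies directly as in Section~2.2.
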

\begin{proof}
{We prove it by induction on $|\sigma|$. When $|\sigma|=1$, say $\sigma=(d)$ for some $d$, we have $P_\sigma=P_d=Q_d$. So this case is obvious since $Q_d$ is nonsingular. Assume that the statement holds true for all $\sigma\in \Sigma_m$ for some $m$. Let $\sigma\in\Sigma_{m+1}$ and suppose $\sigma=(d,\tau)$ where $\tau\in\Sigma_m$. By definition, $P_\sigma=P_{d,\tau}$ is a projective bundle over $Q_{d,r}\times_{Q_r}\mathring{P}_\tau$ where $r=\lt(\tau)$. Hence $P_\sigma$ is smooth over $Q_{d,r}\times_{Q_r}\mathring{P}_\tau$. We know that $Q_{d,r}$ is smooth over $Q_r$, hence $Q_{d,r}\times_{Q_r}\mathring{P}_\tau$ is smooth over $\mathring{P}_\tau$. Since smoothness is transitive, $P_\sigma$ is smooth over $\mathring{P}_\tau$. By induction hypothesis, $P_\tau$ is nonsingular, and so is $\mathring{P}_\tau$ and hence $P_\sigma$ is nonsingular. This completes the proof.}
\end{proof}

Let $\sigma\in\Sigma$, $l=\lt(\sigma)$ and let $d>r>l$. We define a morphism $\phi_{d,r,\sigma}: R_{d,r,\sigma}\to P_{d,\sigma}$ using the following commutative diagram
\begin{equation}
\xymatrix@C=4pc{
P_{r,\sigma}\ar[d]_{a_{r,\sigma}} & Q_{d,r}\times_{Q_r}P_{r,\sigma}=R_{d,r,\sigma}\ar[l]_-{p_{d,r,\sigma}}\ar@{.>}[r]^-{\phi_{d,r,\sigma}}\ar[d]_{1\times a_{r,\sigma}} & P_{d,\sigma}\ar[d]^{a_{d,\sigma}} \\
Q_{r,l}\times_{Q_l}\mathring{P}_\sigma \ar[d]_{\mathring{q}_{r,\sigma}} & Q_{d,r}\times_{Q_r}Q_{r,l}\times_{Q_l}\mathring{P}_\sigma \ar[l]_-{\psi_{\ul{d},r,l}\times 1}\ar[r]^-{\psi_{d,\ul{r},l}\times 1}\ar[d]_{1\times\mathring{q}_{r,\sigma}} & Q_{d,l}\times_{Q_l}\mathring{P}_\sigma \ar[d]^{\mathring{q}_{d,\sigma}} \\
Q_{r,l} & Q_{d,r}\times_{Q_r}Q_{r,l}\ar[l]_-{\psi_{\ul{d},r,l}}\ar[r]^-{\psi_{d,\ul{r},l}} & Q_{d,l}
}
\end{equation}
By the base change property, $R_{d,r,\sigma}$ is a projective bundle over $Q_{d,r}\times_{Q_r}\mathring{R}_{r,\sigma}=Q_{d,r}\times_{Q_r}Q_{r,l}\times_{Q_l}\mathring{P}_\sigma$
\[
R_{d,r,\sigma}=(Q_{d,r}\times_{Q_r}\mathring{R}_{r,\sigma})\times_{\mathring{R}_{r,\sigma}}P_{r,\sigma} =\mathbb{P}((\psi_{\ul{d},r,l}\times 1)^*\pi_*\EExt^1({\bar{\mathring{q}}}_{r,\sigma}^*\mathcal{T}_{r,l},{\bar{\mathring{p}}}_{r,\sigma}^*\mathring{\mathcal{X}}_\sigma))
\]
with $\mathcal{O}_{R_{d,r,\sigma}}(1)=p_{d,r,\sigma}^*\mathcal{O}_{P_{r,\sigma}}(1)$ and structure morphism $1\times a_{r,\sigma}$. On $\mathbb{P}^1\times R_{d,r,\sigma}$, we have a commutative diagram of sheaves
\begin{equation}\label{Diagram-R(d,r,l)}
\CD{
 & & 0\ar[d] & 0\ar[d] & \\
 & & \bar q_{d,r,\sigma}^*\mathcal{T}_{d,r}\ar@{=}[r]\ar[d] & \overline{`a_{r,\sigma}}\,^*\overline{`\mathring q_{r,\sigma}}\,^*\bar \psi_{d,r,\ul{l}}^*\mathcal{T}_{d,r}\ar[d] \\
0\ar[r] & (\overline{`a_{r,\sigma}}\,^*\overline{\psi_{d,\ul{r},l}'}\,^*{\bar{\mathring p}}_{d,\sigma}^*\mathring{\mathcal{X}}_\sigma)(0,1) \ar[r]\ar@{=}[d] & \mathcal{X}\ar[r]\ar[d] & \overline{` a_{r,\sigma}}\,^*\overline{\psi_{d,\ul{r},l}'}\,^*{\bar{\mathring q}}_{d,\sigma}^*\mathcal{T}_{d,l}\ar[r]\ar[d] & 0 \\
0\ar[r] & \bar p_{d,r,\sigma}^*((\bar a_{r,\sigma}^*{\bar{\mathring p}}_{r,\sigma}^*\mathring{\mathcal{X}}_\sigma)(0,1))\ar[r] &\bar p_{d,r,\sigma}^*\mathcal{X}_{r,\sigma}\ar[r]\ar[d] & \bar p_{d,r,\sigma}^*\bar a_{r,\sigma}^*{\bar{\mathring{q}}}_{r,\sigma}^*\mathcal{T}_{r,l}\ar[r]\ar[d] & 0 \\
 & & 0 & 0
}
\end{equation}
where
\begin{itemize}
\item {for short notations, we have set $`a_{r,\sigma}:=1\times a_{r,\sigma}$, $`\mathring q_{r,\sigma}:=1\times q_{r,\sigma}$, $\psi_{d,\ul{r},l}':=\psi_{d,\ul{r},l}\times 1$,}
\item the last row is the pullback of the universal extension of $P_{r,\sigma}$ via $\bar p_{d,r,\sigma}: \mathbb{P}^1\times R_{d,r,\sigma}\to \mathbb{P}^1\times P_{r,\sigma}$,
\item the last column is the pullback of the exact sequence of torsion sheaves via $\overline{`a_{r,\sigma}}\,\overline{` \mathring q_{r,\sigma}}: \mathbb{P}^1\times R_{d,r,\sigma}\to \mathbb{P}^1\times(Q_{d,r}\times_{Q_r}Q_{r,l})$, and
\item $\mathcal{X}$ is the fiber product $(\bar p_{d,r,\sigma}^*\mathcal{X}_{r,\sigma})\times_{(\bar p_{d,r,\sigma}^*\bar a_{r,\sigma}^*{\bar{\mathring{q}}}_{r,\sigma}^*\mathcal{T}_{r,l})}(\overline{` a_{r,\sigma}}\,^*\overline{\psi_{d,\ul{r},l}'}\,^*{\bar{\mathring q}}_{r,\sigma}^*\mathcal{T}_{d,l})$ in the category of coherent sheaves.
\end{itemize}
Recall that the universal extension of $P_{d,\sigma}$ is the exact sequence
\begin{equation}\label{SES-P(d,sigma)}
0\to (\bar a_{d,\sigma}^*{\bar{\mathring p}}_{d,\sigma}^*\mathring{\mathcal{X}}_\sigma)(0,1)\to \mathcal{X}_{d,\sigma}\to \bar a_{d,\sigma}^*{\bar{\mathring q}}_{d,\sigma}^*\mathcal{T}_{d,l}\to 0
\end{equation}
on $\mathbb{P}^1\times P_{d,\sigma}$. By Theorem \ref{thm-universal}, the middle row determines a $(Q_{d,l}\times_{Q_l}\mathring{P}_\sigma)$-morphism
\[
\phi_{d,r,\sigma}: R_{d,r,\sigma}\to P_{d,\sigma}
\]
such that there are isomorphisms $\mathcal{O}_{R_{d,r,\sigma}}(1)\simeq \phi_{d,r,\sigma}^*\mathcal{O}_{P_{d,\sigma}}(1)$ and $\mathcal{X}\simeq \bar\phi_{d,r,\sigma}^*\mathcal{X}_{d,\sigma}$ that make the following diagram commute:
\[
\CD{
0\ar[r] & (\overline{`a_{r,\sigma}}\,^*\overline{\psi_{d,\ul{r},l}'}\,^*{\bar{\mathring p}}_{d,\sigma}^*\mathring{\mathcal{X}}_\sigma)(0,1) \ar[r]\ar[d]^{\simeq} & \mathcal{X}\ar[r]\ar[d]^{\simeq} & \overline{` a_{r,\sigma}}\,^*\overline{\psi_{d,\ul{r},l}'}\,^*{\bar{\mathring q}}_{d,\sigma}^*\mathcal{T}_{d,l}\ar[r]\ar@{=}[d] & 0 \\
0\ar[r] & \bar\phi_{d,r,\sigma}^*((\bar a_{d,\sigma}^*{\bar{\mathring p}}_{d,\sigma}^*\mathring{\mathcal{X}}_\sigma)(0,1))\ar[r] & \bar\phi_{d,r,\sigma}^*\mathcal{X}_{d,\sigma}\ar[r] & \bar\phi_{d,r,\sigma}^*\bar a_{d,\sigma}^*{\bar{\mathring q}}_{d,\sigma}^*\mathcal{T}_{d,l}\ar[r] & 0
}
\]
where the first row is the middle row of diagram (\ref{Diagram-R(d,r,l)}) and the second row is the pullback of the universal extension (\ref{SES-P(d,sigma)}) of $P_{d,\sigma}$ via $\bar\phi_{d,r,\sigma}: \mathbb{P}^1\times R_{d,r,\sigma}\to \mathbb{P}^1\times P_{d,\sigma}$.
For simplicity, we make identifications
\begin{equation}\label{Identification-phi}
\mathcal{O}_{R_{d,r,\sigma}}(1)=\phi_{d,r,\sigma}^*\mathcal{O}_{P_{d,\sigma}}(1),\qquad \mathcal{X}=\bar\phi_{d,r,\sigma}^*\mathcal{X}_{d,\sigma}.
\end{equation}

Let $\mathring{\phi}_{d,r,\sigma}: \mathring{R}_{d,r,\sigma}\to P_{d,\sigma}$ be the restriction of $\phi_{d,r,\sigma}$ to $\mathring{R}_{d,r,\sigma}$. The restriction of the middle column of diagram (\ref{Diagram-R(d,r,l)}) to $\mathbb{P}^1\times\mathring{R}_{d,r,\sigma}$ is the exact sequence
\begin{equation}\label{SES-R(d,r,l)}
0\to {\bar{\mathring q}}_{d,r,\sigma}^*\mathcal{T}_{d,r}\to \bar{\mathring{\phi}}_{d,r,\sigma}^*\mathcal{X}_{d,\sigma}\to {\bar{\mathring p}}_{d,r,\sigma}^*\mathring{\mathcal{X}}_{r,\sigma}\to 0
\end{equation}
\begin{prop}
The map $\mathring{\phi}_{d,r,\sigma}$ factors through the inclusion $\mathring{Y}_{d,r,\sigma}\embed P_{d,\sigma}$.
\end{prop}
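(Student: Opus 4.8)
The plan is to show that the pulled-back universal sheaf $\bar{\mathring\phi}_{d,r,\sigma}^*\mathcal{X}_{d,\sigma}$ on $\mathbb{P}^1\times\mathring{R}_{d,r,\sigma}$ is flat over $\mathring{R}_{d,r,\sigma}$ with torsion part of relative degree exactly $d-r$, and then invoke the flattening stratification result (Proposition \ref{FlatteningStratification}, applied to $P_{d,\sigma}$) to conclude that $\mathring\phi_{d,r,\sigma}$ must factor through the locally closed stratum $\mathring{Y}_{d,r,\sigma}$. Concretely, Proposition \ref{FlatteningStratification} says that a morphism $f\colon R\to P_{d,\sigma}$ factors through $\mathring{Y}_{d,e,\tau}$ if and only if $\bar f^*\mathcal{X}_{d,\sigma}^\e$ is flat over $R$ with relative degree $d-e$; equivalently (since $\mathcal{X}_{d,\sigma}^\e$ detects the degree of the torsion part, cf. the discussion around Corollary \ref{cor_rank}) if and only if the torsion part of $\bar f^*\mathcal{X}_{d,\sigma}$ has constant relative degree $d-e$. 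So the whole problem reduces to identifying the torsion part of $\bar{\mathring\phi}_{d,r,\sigma}^*\mathcal{X}_{d,\sigma}$.

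The key input is the exact sequence (\ref{SES-R(d,r,l)}), namely
\[
0\to {\bar{\mathring q}}_{d,r,\sigma}^*\mathcal{T}_{d,r}\to \bar{\mathring\phi}_{d,r,\sigma}^*\mathcal{X}_{d,\sigma}\to {\bar{\mathring p}}_{d,r,\sigma}^*\mathring{\mathcal{X}}_{r,\sigma}\to 0,
\]
obtained by restricting the middle column of diagram (\ref{Diagram-R(d,r,l)}) to $\mathbb{P}^1\times\mathring{R}_{d,r,\sigma}$. Here ${\bar{\mathring q}}_{d,r,\sigma}^*\mathcal{T}_{d,r}$ is a torsion sheaf, flat over $\mathring{R}_{d,r,\sigma}$ of relative degree $d-r$ (as $\mathcal{T}_{d,r}$ is torsion of relative degree $d-r$ over $Q_{d,r}$ and flatness is preserved by base change), while ${\bar{\mathring p}}_{d,r,\sigma}^*\mathring{\mathcal{X}}_{r,\sigma}$ is locally free on $\mathbb{P}^1\times\mathring{R}_{d,r,\sigma}$ because $\mathring{\mathcal{X}}_{r,\sigma}$ is locally free by construction. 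First I would argue that the torsion subsheaf of $\bar{\mathring\phi}_{d,r,\sigma}^*\mathcal{X}_{d,\sigma}$ is exactly ${\bar{\mathring q}}_{d,r,\sigma}^*\mathcal{T}_{d,r}$: the image of a torsion section in the locally free quotient ${\bar{\mathring p}}_{d,r,\sigma}^*\mathring{\mathcal{X}}_{r,\sigma}$ must vanish, so the torsion subsheaf is contained in ${\bar{\mathring q}}_{d,r,\sigma}^*\mathcal{T}_{d,r}$, and the reverse inclusion is clear. Hence $(\bar{\mathring\phi}_{d,r,\sigma}^*\mathcal{X}_{d,\sigma})^\tor = {\bar{\mathring q}}_{d,r,\sigma}^*\mathcal{T}_{d,r}$, which is flat over $\mathring{R}_{d,r,\sigma}$ of constant relative degree $d-r$; consequently the locally free part is also flat, and the relative degree of the torsion part is constantly $d-r$. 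Fiberwise this says that for each point $x\in\mathring{R}_{d,r,\sigma}$ the sheaf $(\mathcal{X}_{d,\sigma})_{\mathring\phi_{d,r,\sigma}(x)}$ has torsion of degree $d-r$, i.e. $\mathring\phi_{d,r,\sigma}(x)\in\mathring{Y}_{d,r,\sigma}$ set-theoretically; combined with the flatness statement, Proposition \ref{FlatteningStratification} upgrades this to a scheme-theoretic factorization through $\mathring{Y}_{d,r,\sigma}\embed P_{d,\sigma}$.

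The main obstacle I anticipate is the first reduction: carefully justifying that ${\bar{\mathring q}}_{d,r,\sigma}^*\mathcal{T}_{d,r}$ really is the full torsion subsheaf, and that the extension (\ref{SES-R(d,r,l)}) behaves well under further base change so that the degree of the torsion part is genuinely \emph{constant} in families (not merely constant on geometric points). One should check that over $\mathbb{P}^1$, where torsion splits off, $\mathring{\mathcal{X}}_{r,\sigma}$ having no torsion forces the splitting ${\bar{\mathring q}}_{d,r,\sigma}^*\mathcal{T}_{d,r}$ to be precisely $(\bar{\mathring\phi}_{d,r,\sigma}^*\mathcal{X}_{d,\sigma})^\tor$ fiberwise; combined with the flatness of both ${\bar{\mathring q}}_{d,r,\sigma}^*\mathcal{T}_{d,r}$ and ${\bar{\mathring p}}_{d,r,\sigma}^*\mathring{\mathcal{X}}_{r,\sigma}$ over $\mathring{R}_{d,r,\sigma}$, the hypotheses of Proposition \ref{FlatteningStratification} are met with $e=r$. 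A secondary routine point is to confirm that diagram (\ref{Diagram-R(d,r,l)}) restricts correctly over $\mathring{R}_{d,r,\sigma}$ and that the identifications (\ref{Identification-phi}) are compatible with taking restrictions; these are bookkeeping matters already implicit in the construction of $\phi_{d,r,\sigma}$ via Theorem \ref{thm-universal}.
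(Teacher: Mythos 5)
Your proposal is correct and follows essentially the same route as the paper: both hinge on the exact sequence (\ref{SES-R(d,r,l)}) together with Proposition \ref{FlatteningStratification}. The only cosmetic difference is that the paper applies $(-)^\e$ to that sequence, which kills the locally free quotient outright and yields $(\bar{\mathring{\phi}}_{d,r,\sigma}^*\mathcal{X}_{d,\sigma})^\e \simeq ({\bar{\mathring q}}_{d,r,\sigma}^*\mathcal{T}_{d,r})^\e = \bar{\mathring{\phi}}_{d,r,\sigma}^*(\mathcal{X}_{d,\sigma}^\e)$, so the flatness hypothesis of Proposition \ref{FlatteningStratification} is met verbatim, whereas you identify the torsion subsheaf and then translate back to the $\e$-dual --- the same computation read from the other side.
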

\begin{proof}
Taking the dual of the sequence (\ref{SES-R(d,r,l)}), we obtain $(\bar{\mathring{\phi}}_{d,r,\sigma}^*\mathcal{X}_{d,\sigma})^\e \simeq ({\bar{\mathring q}}_{d,r,\sigma}^*\mathcal{T}_{d,r})^\e$. Since $(\bar{\mathring{\phi}}_{d,r,\sigma}^*\mathcal{X}_{d,\sigma})^\e=\bar{\mathring{\phi}}_{d,r,\sigma}^*(\mathcal{X}_{d,\sigma}^\e)$, we have that $\bar{\mathring{\phi}}_{d,r,\sigma}^*(\mathcal{X}_{d,\sigma}^\e)$ is flat over $\mathring{R}_{d,r,\sigma}$ with relative degree $d-r$. By Proposition \ref{FlatteningStratification}, the map $\mathring{\phi}_{d,r,\sigma}$ factors through the inclusion $\mathring{Y}_{d,r,\sigma}\embed P_{d,\sigma}$.
\end{proof}

We denote by $\mathring{\varphi}_{d,r,\sigma}: \mathring{R}_{d,r,\sigma}\to \mathring{Y}_{d,r,\sigma}$ the map factored out from $\mathring{\phi}_{d,r,\sigma}$.
\begin{prop}
The morphism $\mathring{\varphi}_{d,r,\sigma}: \mathring{R}_{d,r,\sigma}\to \mathring{Y}_{d,r,\sigma}$ is an isomorphism.
\end{prop}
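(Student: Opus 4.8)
The plan is to construct an explicit inverse to $\mathring{\varphi}_{d,r,\sigma}$ using the universal property enjoyed by $R_{d,r,\sigma}$ as a space of relative non-split extensions, in direct parallel with the proof that $\mathring{\varphi}_{d,r}\colon \mathring{Q}_{d,r}\to\mathring{Z}_{d,r}$ is an isomorphism (Proposition 4.8 of \cite{Shao}) and the proof of Proposition \ref{Prop-NormalBundle0}. First I would observe, via Proposition \ref{FlatteningStratification}, that the restriction of $\mathcal{X}_{d,\sigma}^\e$ to $\mathbb{P}^1\times\mathring{Y}_{d,r,\sigma}$ is flat over $\mathring{Y}_{d,r,\sigma}$ of relative degree $d-r$; together with the universal extension (\ref{SES-P(d,sigma)}) this shows that over $\mathring{Y}_{d,r,\sigma}$ the middle term $\mathcal{X}_{d,\sigma}$ receives the torsion subsheaf $(\mathcal{X}_{d,\sigma})^\tor$ (degree $d-r$) with locally free quotient $(\mathcal{X}_{d,\sigma})^\fre$. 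The locally free quotient, being a rank-$(n-k)$ degree-$r$ quotient of $V$ whose kernel one can track, determines a point of $\mathring P_\tau$ (here one uses the inductive construction of $P_\tau$ and the flattening stratification at the previous level); and the torsion quotient $\bar a_{d,\sigma}^*{\bar{\mathring q}}_{d,\sigma}^*\mathcal{T}_{d,l}\to (\mathcal{X}_{d,\sigma})^\tor$, which is itself a degree-$(d-r)$ torsion quotient of $\mathcal{E}_l$ pulled back appropriately, determines a point of $Q_{d,r}$ via the universal property of $Q_{d,r}=\Quot^{0,d-r}_{\mathcal{E}_r/\cdots/Q_r}$. These two data, compatible over $\mathring P_\tau$, give a morphism $\mathring{Y}_{d,r,\sigma}\to Q_{d,r}\times_{Q_r}\mathring P_\tau=\mathring R_{d,r,\sigma}$.

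The heart of the argument is then to upgrade this set-theoretic bijection to an isomorphism of schemes, for which I would invoke the universal property of $R_{d,r,\sigma}$ as a space of relative non-split extensions (Theorem \ref{thm-universal}, applied at the appropriate level): the canonical non-split extension of the torsion part of $\mathcal{X}_{d,\sigma}|_{\mathring Y}$ by its free part, twisted suitably, is exactly the kind of nowhere-split extension classified by $R_{d,r,\sigma}$, hence yields a morphism $g\colon \mathring{Y}_{d,r,\sigma}\to R_{d,r,\sigma}$ together with the requisite isomorphisms of line bundles and of the middle sheaves. One then checks $g$ lands in $\mathring R_{d,r,\sigma}$ (because $\mathcal{X}_{d,\sigma}|_{\mathring Y}$ has torsion of degree exactly $d-r$, meaning the ``free part'' occurring here is the locally free sheaf $\mathring{\mathcal{X}}_{r,\sigma}$ needed in the fiber product, not a further degeneration), and that $g$ is a two-sided inverse to $\mathring{\varphi}_{d,r,\sigma}$ by comparing the universal extensions on both sides: $\mathring{\varphi}_{d,r,\sigma}\circ g=\id$ follows from the uniqueness clause in Theorem \ref{thm-universal} applied to $\mathcal{X}_{d,\sigma}|_{\mathring Y}$ itself, and $g\circ\mathring{\varphi}_{d,r,\sigma}=\id$ follows from the identifications (\ref{Identification-phi}) combined with the same uniqueness clause applied on $\mathring R_{d,r,\sigma}$.

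The main obstacle I anticipate is the bookkeeping needed to identify the ``free part'' quotient of $\mathcal{X}_{d,\sigma}|_{\mathring Y}$ with a pullback of $\mathring{\mathcal{X}}_{r,\sigma}$ along a morphism to $\mathring P_{r,\sigma}$ — i.e. verifying that the locally free quotient, as the degeneration parameter varies over $\mathring Y_{d,r,\sigma}$, genuinely sweeps out (scheme-theoretically) the previously constructed space $\mathring P_{r,\sigma}$ rather than some smaller or larger locus, and that the extension data assemble compatibly with the fiber-product structure $R_{d,r,\sigma}=Q_{d,r}\times_{Q_r}P_{r,\sigma}$. This requires chasing the diagram (\ref{Diagram-R(d,r,l)}) and the identifications (\ref{Identification-psi}), (\ref{Identification-phi}) carefully, together with the flatness statements in Proposition \ref{FlatteningStratification} at both the current and the previous level of the induction. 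Since each ingredient — the universal property of $Q_{d,r}$, Theorem \ref{thm-universal}, and the flattening stratification — has already been set up precisely for this purpose, the argument should be a (lengthy but) formal diagram chase entirely analogous to \cite{Shao}, Proposition 4.8, and I would structure the write-up as an induction on $|\sigma|$ mirroring that proof, omitting the routine verifications that duplicate loc.\ cit.
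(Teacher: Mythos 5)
Your plan matches the paper's own proof in substance: the paper likewise builds the inverse by restricting the universal extension to $\mathring{Y}_{d,r,\sigma}$, extracting the (flat) torsion and locally free parts via double duals, feeding the torsion quotient into the universal property of the Quot scheme to obtain a morphism $\mu$ to $Q_{d,r}$ and the resulting nowhere-split extension into Theorem \ref{thm-universal} to obtain a morphism $\nu$ to $\mathring{P}_{r,\sigma}$, and assembling $\mu\times\nu$ into a map to the fiber product $\mathring{R}_{d,r,\sigma}$. The one bookkeeping step you elide --- that the torsion quotient is a priori a quotient of a pullback of $\mathcal{E}_l$ rather than of $\mathcal{E}_r$ --- is handled in the paper by first producing an auxiliary morphism $\lambda\colon \mathring{Y}_{d,r,\sigma}\to Q_{r,l}$ and invoking the identification $\mathcal{E}_{r,l}=\bar\phi_{r,l}^*\mathcal{E}_r$.
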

\begin{proof}
We prove by constructing an inverse of $\mathring{\varphi}_{d,r,\sigma}$. Let $i: \mathring{Y}_{d,r,\sigma}\embed P_{d,\sigma}$ be the inclusion map. The pullback of the universal extension (\ref{SES-P(d,sigma)}) via $i$ is the exact sequence
\[
0\to \bar i^*((\bar a_{d,\sigma}^*{\bar{\mathring p}}_{d,\sigma}^*\mathring{\mathcal{X}}_\sigma)(0,1))\to \bar i^*\mathcal{X}_{d,\sigma}\to \bar i^*\bar a_{d,\sigma}^*{\bar{\mathring q}}_{d,\sigma}^*\mathcal{T}_{d,l}\to 0
\]
on $\mathbb{P}^1\times \mathring{Y}_{d,r,\sigma}$. Taking dual, we obtain a long exact sequence
\[
0\to (\bar i^*\mathcal{X}_{d,\sigma})^\vee \to (\bar i^*((\bar a_{d,\sigma}^*{\bar{\mathring p}}_{d,\sigma}^*\mathring{\mathcal{X}}_\sigma)(0,1)))^\vee \to (\bar i^*\bar a_{d,\sigma}^*{\bar{\mathring q}}_{d,\sigma}^*\mathcal{T}_{d,l})^\e \to (\bar i^*\mathcal{X}_{d,\sigma})^\e \to 0
\]
We break it into two short exact sequences
\[
0\to (\bar i^*\mathcal{X}_{d,\sigma})^\vee \to (\bar i^*((\bar a_{d,\sigma}^*{\bar{\mathring p}}_{d,\sigma}^*\mathring{\mathcal{X}}_\sigma)(0,1)))^\vee \to \mathcal{T} \to 0
\]
\[
0\to \mathcal{T} \to (\bar i^*\bar a_{d,\sigma}^*{\bar{\mathring q}}_{d,\sigma}^*\mathcal{T}_{d,l})^\e \to (\bar i^*\mathcal{X}_{d,\sigma})^\e \to 0
\]
We have $(\bar i^*\mathcal{X}_{d,\sigma})^\e=\bar i^*(\mathcal{X}_{d,\sigma}^\e)$ and $(\bar i^*\bar a_{d,\sigma}^*{\bar{\mathring q}}_{d,\sigma}^*\mathcal{T}_{d,l})^\e=\bar i^*\bar a_{d,\sigma}^*{\bar{\mathring q}}_{d,\sigma}^*(\mathcal{T}_{d,l}^\e)$. By Proposition \ref{FlatteningStratification}, $\bar i^*(\mathcal{X}_{d,\sigma}^\e)$ is flat over $\mathring{Y}_{d,r,\sigma}$ with relative degree $d-r$. Since $\mathcal{T}_{d,l}^\e$ is also flat over $Q_{d,l}$ with relative degree $d-l$, we have $\mathcal{T}$ is flat over $\mathring{Y}_{d,r,\sigma}$ with relative degree $r-l$. We also know that both $(\bar i^*\bar a_{d,\sigma}^*{\bar{\mathring q}}_{d,\sigma}^*\mathcal{T}_{d,l})^\e$ and $(\bar i^*\mathcal{X}_{d,\sigma})^\e$ are torsion, hence $\mathcal{T}$ is also torsion. Since the middle term of the first sequence is locally free and the last term is flat over $\mathring{Y}_{d,r,\sigma}$, the first term, $(\bar i^*\mathcal{X}_{d,\sigma})^\vee$, is locally free as well.

Now dualizing both of the above sequences, we obtain another two exact sequences
\begin{equation}\label{sequence1}
0\to  \bar i^*((\bar a_{d,\sigma}^*{\bar{\mathring p}}_{d,\sigma}^*\mathring{\mathcal{X}}_\sigma)(0,1)) \to (\bar i^*\mathcal{X}_{d,\sigma})^{\vee\vee} \to \mathcal{T}^\e \to 0
\end{equation}
\begin{equation}\label{sequence2}
0\to (\bar i^*\mathcal{X}_{d,\sigma})^{\e\e}  \to \bar i^*\bar a_{d,\sigma}^*{\bar{\mathring q}}_{d,\sigma}^*\mathcal{T}_{d,l} \to \mathcal{T}^\e \to 0
\end{equation}
The Quot scheme $Q_{d,l}$ has a universal quotient $\bar\theta_{d,l}^*\mathcal{E}_l\surj \mathcal{T}_{d,l}$. Applying $\bar i^*\bar a_{d,\sigma}^*{\bar{\mathring q}}_{d,\sigma}^*$ to this quotient, we obtain a quotient $\bar i^*\bar a_{d,\sigma}^*{\bar{\mathring q}}_{d,\sigma}^*\bar\theta_{d,l}^*\mathcal{E}_l\surj \bar i^*\bar a_{d,\sigma}^*{\bar{\mathring q}}_{d,\sigma}^*\mathcal{T}_{d,l}$. Its composition with the quotient $\bar i^*\bar a_{d,\sigma}^*{\bar{\mathring q}}_{d,\sigma}^*\mathcal{T}_{d,l} \surj \mathcal{T}^\e$ from the sequence (\ref{sequence2}) yields a quotient

We form a commutative diagram
\[
\CD{
& & 0\ar[d] & 0\ar[d] \\
0\ar[r] & \bar i^*\bar a_{d,\sigma}^*{\bar{\mathring q}}_{d,\sigma}^*\mathcal{E}_{d,l}\ar@{..>}[r]\ar@{=}[d] & \mathcal{E}\ar@{..>}[r]\ar[d] & (\bar i^*\mathcal{X}_{d,\sigma})^{\e\e}\ar[r]\ar[d] & 0 \\
0\ar[r] & \bar i^*\bar a_{d,\sigma}^*{\bar{\mathring q}}_{d,\sigma}^*\mathcal{E}_{d,l}\ar[r] & \bar i^*\bar a_{d,\sigma}^*{\bar{\mathring q}}_{d,\sigma}^*\bar\theta_{d,l}^*\mathcal{E}_l\ar[r]\ar[d] & \bar i^*\bar a_{d,\sigma}^*{\bar{\mathring q}}_{d,\sigma}^*\mathcal{T}_{d,l}\ar[r]\ar[d] & 0 \\
& & \mathcal{T}^\e\ar@{=}[r]\ar[d] & \mathcal{T}^\e\ar[d] \\
& & 0 & 0
}
\]
where the last column is the sequence (\ref{sequence2}), the middle row is the pullback of the universal exact sequence of $Q_{d,l}$ via ${\bar{\mathring q}}_{d,\sigma}\bar a_{d,\sigma}\bar i: \mathbb{P}^1\times \mathring{Y}_{d,r,\sigma}\to \mathbb{P}^1\times Q_{d,l}$, and $\mathcal{E}$ is defined to be the kernel of the composition $\bar i^*\bar a_{d,\sigma}^*{\bar{\mathring q}}_{d,\sigma}^*\bar\theta_{d,l}^*\mathcal{E}_l\surj \bar i^*\bar a_{d,\sigma}^*{\bar{\mathring q}}_{d,\sigma}^*\mathcal{T}_{d,l}\surj \mathcal{T}^\e$. The dotted arrows in the first row are the induced maps. Since $\mathcal{T}^\e$ is torsion and flat over $\mathring{Y}_{d,r,\sigma}$ with relative degree $r-l$, the quotient $\bar i^*\bar a_{d,\sigma}^*{\bar{\mathring q}}_{d,\sigma}^*\bar\theta_{d,l}^*\mathcal{E}_l\surj \mathcal{T}^\e$ from the middle column induces a $Q_l$-morphism $\lambda: \mathring{Y}_{d,r,\sigma}\to Q_{r,l}$ such that the pullback of the universal exact sequence of $Q_{r,l}$ via $\bar\lambda$ is the same as the middle column:
\begin{equation}\label{sequence3}
\CD{
0\ar[r] & \bar\lambda^* \mathcal{E}_{r,l}\ar[r]\ar@{=}[d] & \bar\lambda^*\bar\theta_{r,l}^*\mathcal{E}_l\ar[r]\ar@{=}[d] & \bar\lambda^*\mathcal{T}_{r,l}\ar[r]\ar@{=}[d] & 0\\
0\ar[r] & \mathcal{E}\ar[r] & \bar i^*\bar a_{d,\sigma}^*{\bar{\mathring q}}_{d,\sigma}^*\bar\theta_{d,l}^*\mathcal{E}_l\ar[r] & \mathcal{T}^\e\ar[r] & 0
}
\end{equation}
We have a morphism $\phi_{r,l}: Q_{r,l}\to Q_r$ and an identification $\mathcal{E}_{r,l}=\bar\phi_{r,l}^*\mathcal{E}_r$ as in diagram (\ref{DiagramQdr}). Thus we have an identification
\[
\mathcal{E}=\bar\lambda^*\mathcal{E}_{r,l}=\bar\lambda^*\bar\phi_{r,l}^*\mathcal{E}_r
\]
and we can rewrite the first row as
\[
0\to \bar i^*\bar a_{d,\sigma}^*{\bar{\mathring q}}_{d,\sigma}^*\mathcal{E}_{d,l}\to \bar\lambda^*\bar\phi_{r,l}^*\mathcal{E}_r\to (\bar i^*\mathcal{X}_{d,\sigma})^{\e\e}\to 0
\]
Since $(\bar i^*\mathcal{X}_{d,\sigma})^{\e\e}$ is torsion and flat over $\mathring{Y}_{d,r,\sigma}$ with relative degree $d-r$, the above sequence induces a morphism $\mu: \mathring{Y}_{d,r,\sigma}\to Q_{d,r}$ such that the pullback of the universal exact sequence of $Q_{d,r}$ via $\bar\mu$ is the same as the above sequence:
\[
\CD{
0\ar[r] & \bar\mu^*\mathcal{E}_{d,r}\ar[r]\ar@{=}[d] & \bar\mu^*\bar\theta_{d,r}^*\mathcal{E}_r\ar[r]\ar@{=}[d] & \bar\mu^*\mathcal{T}_{d,r}\ar[r]\ar@{=}[d] & 0 \\
0\ar[r] & \bar i^*\bar a_{d,\sigma}^*{\bar{\mathring q}}_{d,\sigma}^*\mathcal{E}_{d,l}\ar[r] & \bar\lambda^*\bar\phi_{r,l}^*\mathcal{E}_r\ar[r] & (\bar i^*\mathcal{X}_{d,\sigma})^{\e\e}\ar[r] & 0
}
\]

Using the identification $\mathcal{T}^\e=\bar\lambda^*\mathcal{T}_{r,l}$ from sequence (\ref{sequence3}), we can rewrite the sequence (\ref{sequence1}) as
\[
0\to  \bar i^*((\bar a_{d,\sigma}^*{\bar{\mathring p}}_{d,\sigma}^*\mathring{\mathcal{X}}_\sigma)(0,1)) \to (\bar i^*\mathcal{X}_{d,\sigma})^{\vee\vee} \to \bar\lambda^*\mathcal{T}_{r,l} \to 0
\]
By Theorem \ref{thm-universal}, the above exact sequence determines a morphism $\nu: \mathring{Y}_{d,r,\sigma}\to P_{r,\sigma}$ such that the pullback of the universal extension of $P_{r,\sigma}$ is the same as the above sequence:
\[
\CD{
0\ar[r] & \bar\nu^*((\bar a_{r,\sigma}^*{\bar{\mathring p}}_{r,\sigma}^*\mathring{\mathcal{X}}_\sigma)(0,1))\ar[r]\ar@{=}[d] & \bar\nu^*\mathcal{X}_{r,\sigma}\ar[r]\ar@{=}[d] & \bar\nu^*\bar a_{r,\sigma}^*{\bar{\mathring q}}_{r,\sigma}^*\mathcal{T}_{r,l}\ar[r]\ar@{=}[d] & 0 \\
0\ar[r] &  \bar i^*((\bar a_{d,\sigma}^*{\bar{\mathring p}}_{d,\sigma}^*\mathring{\mathcal{X}}_\sigma)(0,1))\ar[r] & (\bar i^*\mathcal{X}_{d,\sigma})^{\vee\vee}\ar[r] & \bar\lambda^*\mathcal{T}_{r,l}\ar[r] & 0
}
\]
But because the middle term $(\bar i^*\mathcal{X}_{d,\sigma})^{\vee\vee}$ is locally free, the map $\nu$ actually maps $\mathring{Y}_{d,r,\sigma}$ into $\mathring{P}_{r,\sigma}$. So we obtain a morphism
\[
\mu\times\nu: \mathring{Y}_{d,r,\sigma}\to Q_{d,r}\times_{Q_r}\mathring{P}_{r,\sigma}:=\mathring{R}_{d,r,\sigma}
\]
It is now routine to check that $\mu\times\nu$ is the inverse of $\mathring{\varphi}_{d,r,\sigma}: \mathring{R}_{d,r,\sigma}\to \mathring{Y}_{d,r,\sigma}$, and this complete the proof.
\end{proof}

The above proposition shows that $\mathring{\phi}_{d,r,\sigma}$ is an embedding. So we can identify $\mathring{R}_{d,r,\sigma}$ with the subscheme $\mathring{Y}_{d,r,\sigma}$ of $P_{d,\sigma}$ and identify $\mathring{\phi}_{d,r,\sigma}$ with the inclusion map. We also have $\bar{\mathring{\phi}}_{d,r,\sigma}^*\mathcal{X}_{d,\sigma}=\mathcal{X}_{\mathring{R}_{d,r,\sigma}}$. So the exact sequence (\ref{SES-R(d,r,l)}) can be rewritten as:
\[
0\to {\bar{\mathring q}}_{d,r,\sigma}^*\mathcal{T}_{d,r}\to \bar{\mathring{\phi}}_{d,r,\sigma}^*\mathcal{X}_{d,\sigma}\to {\bar{\mathring p}}_{d,r,\sigma}^*\mathring{\mathcal{X}}_{r,\sigma}\to 0
\]

Since $R_{d,r,\sigma}$ and $P_{d,\sigma}$ are both nonsingular, we can talk about their tangent bundles as well as the normal bundle of the embedding $\mathring{\phi}_{d,r,\sigma}:\mathring{R}_{d,r,\sigma}\embed P_{d,\sigma}$.
\begin{prop}\label{Prop-NormalBundle}
The normal bundle $\mathcal{N}_{\mathring{R}_{d,r,\sigma}/P_{d,\sigma}}$ of the embedding $\mathring{\phi}_{d,r,\sigma}: \mathring{R}_{d,r,\sigma}\embed P_{d,\sigma}$ is isomorphic to $\pi_*\EExt^1({\bar{\mathring q}}_{d,r,\sigma}^*\mathcal{T}_{d,r},{\bar{\mathring p}}_{d,r,\sigma}^*\mathring{\mathcal{X}}_{r,\sigma})$.
\end{prop}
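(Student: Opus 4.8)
The plan is to follow the proof of Proposition~\ref{Prop-NormalBundle0} closely, replacing $Q_d$, $\mathring{Q}_{d,r}$, $\mathring{Q}_r$ by $P_{d,\sigma}$, $\mathring{R}_{d,r,\sigma}$, $\mathring{P}_{r,\sigma}$, replacing the universal exact sequences of the Quot schemes by the universal extensions, and replacing the use of Str\o mme's formula (Proposition~\ref{tangentSheaf}) by the relative Euler sequence of a projective bundle together with an induction on $|\sigma|$. The starting point is the same: since $\mathring{R}_{d,r,\sigma}$ and $P_{d,\sigma}$ are nonsingular (by the two lemmas above), the normal sheaf $\mathcal{N}:=\mathcal{N}_{\mathring{R}_{d,r,\sigma}/P_{d,\sigma}}$ is locally free and fits into the exact sequence
$$0\to \mathcal{T}_{\mathring{R}_{d,r,\sigma}}\to \mathring{\phi}_{d,r,\sigma}^*\mathcal{T}_{P_{d,\sigma}}\to \mathcal{N}\to 0$$
on $\mathring{R}_{d,r,\sigma}$, so it is enough to describe the two tangent bundles and the natural map between their restrictions to $\mathring{R}_{d,r,\sigma}$.

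Both $R_{d,r,\sigma}$ and $P_{d,\sigma}$ are built as towers of fiber products and projective bundles, so each tangent bundle carries a filtration with identified graded pieces. For $R_{d,r,\sigma}=Q_{d,r}\times_{Q_r}P_{r,\sigma}$, the relative tangent sequence over $P_{r,\sigma}$ has graded pieces $\mathring{q}_{d,r,\sigma}^*\pi_*\HHom(\mathcal{E}_{d,r},\mathcal{T}_{d,r})$ (by Proposition~\ref{tangentSheaf}) and $\mathring{p}_{d,r,\sigma}^*\mathcal{T}_{\mathring{P}_{r,\sigma}}$. For $P_{d,\sigma}$, which is a projective bundle over $\mathring{R}_{d,\sigma}=Q_{d,l}\times_{Q_l}\mathring{P}_\sigma$ with defining sheaf $\pi_*\EExt^1({\bar{\mathring q}}_{d,\sigma}^*\mathcal{T}_{d,l},{\bar{\mathring p}}_{d,\sigma}^*\mathring{\mathcal{X}}_\sigma)$, the relative Euler sequence describes $\mathcal{T}_{P_{d,\sigma}/\mathring{R}_{d,\sigma}}$ in terms of $\mathcal{O}_{P_{d,\sigma}}(1)$ and $a_{d,\sigma}^*$ of that sheaf, while the tower below $\mathring{R}_{d,\sigma}$ is resolved using Proposition~\ref{tangentSheaf} for the $Q_{d,l}/Q_l$-direction and the induction hypothesis (applied to $P_\sigma$) for the $\mathring{P}_\sigma$-direction. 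Throughout one uses Lemma~\ref{lemma-Ext} together with the assumption~(\ref{assumption}) --- which remains valid at every stage by the lemma $R^1\pi_*(\mathcal{X}_\sigma(-1))=0$ --- so that all the $\pi_*$- and $\EExt^1$-sheaves involved are the expected locally free sheaves and commute with the base changes in question.

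One then assembles, on $\mathring{R}_{d,r,\sigma}$, a commutative diagram with exact rows and columns in direct analogy with the diagram in the proof of Proposition~\ref{Prop-NormalBundle0}: the common left column is the relative tangent $\mathcal{T}_{\mathring{R}_{d,r,\sigma}/\mathring{P}_{r,\sigma}}=\pi_*\HHom({\bar{\mathring q}}_{d,r,\sigma}^*\mathcal{E}_{d,r},{\bar{\mathring q}}_{d,r,\sigma}^*\mathcal{T}_{d,r})$, the middle column is $\mathcal{T}_{\mathring{R}_{d,r,\sigma}}\hookrightarrow \mathring{\phi}_{d,r,\sigma}^*\mathcal{T}_{P_{d,\sigma}}$, and the right column is produced by applying $\pi_*\HHom(-,-)$ and $\pi_*\EExt^1(-,-)$ to the short exact sequences (\ref{SES-R(d,r,l)}) and (\ref{SES-Torsions}) and to the exact rows and columns of the diagrams (\ref{Diagram-R(d,r,l)}) and (\ref{Diagram-Q(d,r,l)}). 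The essential input is that, under the identification $\mathring{R}_{d,r,\sigma}\cong\mathring{Y}_{d,r,\sigma}$, the restricted universal extension (\ref{SES-R(d,r,l)}) exhibits ${\bar{\mathring q}}_{d,r,\sigma}^*\mathcal{T}_{d,r}$ and ${\bar{\mathring p}}_{d,r,\sigma}^*\mathring{\mathcal{X}}_{r,\sigma}$ as the torsion part and the locally free part of $\bar{\mathring{\phi}}_{d,r,\sigma}^*\mathcal{X}_{d,\sigma}$; this makes the right column collapse to $\pi_*\EExt^1({\bar{\mathring q}}_{d,r,\sigma}^*\mathcal{T}_{d,r},{\bar{\mathring p}}_{d,r,\sigma}^*\mathring{\mathcal{X}}_{r,\sigma})$. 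Since the remaining rows and columns are exact, the Five Lemma forces $\mathcal{N}\cong \pi_*\EExt^1({\bar{\mathring q}}_{d,r,\sigma}^*\mathcal{T}_{d,r},{\bar{\mathring p}}_{d,r,\sigma}^*\mathring{\mathcal{X}}_{r,\sigma})$, as claimed. (A slightly more conceptual variant runs the same computation through the functor represented by $P_{d,\sigma}$ in Theorem~\ref{thm-universal}, identifying $\mathcal{N}$ with the first-order deformations of the universal extension that smooth its torsion subsheaf, i.e.\ with $\pi_*\EExt^1$ of the torsion part by the locally free part.)

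I expect the main obstacle to be the tangent-bundle computation: there is no off-the-shelf Str\o mme-type description of the tangent bundle of a space of non-split extensions, so one has to derive the formula for $\mathcal{T}_{P_{d,\sigma}}$ by threading the relative Euler sequence through the full defining tower, and --- more tediously --- to keep precise track of the many pullback functors $\bar{(\,\cdot\,)}^*$ across the nested fiber products, checking at each stage that the base-change and vanishing hypotheses required by Lemma~\ref{lemma-Ext}, Proposition~\ref{tangentSheaf} and Proposition~\ref{FlatteningStratification} continue to hold. Once that bookkeeping is in place, the concluding diagram chase is formally the same as in Proposition~\ref{Prop-NormalBundle0}.
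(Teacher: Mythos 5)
Your overall architecture is the right one and matches the paper's: both proofs compare the relative Euler sequences of the two projective bundles $P_{d,\sigma}\to \mathring{R}_{d,\sigma}$ and $R_{d,r,\sigma}\to Q_{d,r}\times_{Q_r}Q_{r,l}\times_{Q_l}\mathring{P}_\sigma$, feed in the sequence (\ref{SES-Torsions}) and the rows of diagram (\ref{Diagram-R(d,r,l)}), and extract the (co)normal bundle by a snake-lemma chase. But there is one concrete step that your proposal does not supply and that is \emph{not} ``formally the same as in Proposition~\ref{Prop-NormalBundle0}.'' In that base case the two tangent sequences sit over the same base and the third column closes up immediately. Here the two projective bundles have \emph{different} bases, related by the morphism $\psi_{d,\ul{r},l}\times 1$, and $\psi_{d,\ul{r},l}\colon Q_{d,r,l}\to Q_{d,l}$ is finite but ramified. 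The difference of the two Euler sequences therefore produces, a priori, the sheaf $(\pi_*\EExt^1(\bar{\mathring f}^*\bar\psi_{d,r,\ul{l}}^*\mathcal{T}_{d,r},\bar{\mathring g}^*\mathring{\mathcal{X}}_\sigma))^\vee(-1)$ --- i.e.\ $\EExt^1$ of $\mathcal{T}_{d,r}$ against the \emph{subsheaf} $\mathring{\mathcal{X}}_\sigma(0,1)$ of $\mathring{\mathcal{X}}_{r,\sigma}$ --- and the conormal bundle is only the kernel of a further surjection of this sheaf onto $\mathring f^*\Omega_{Q_{d,r,l}/Q_{d,l}}$, obtained by comparing $\Omega_{QQP}$ with $\Omega_{\mathring R_{d,\sigma}}$.

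So the missing ingredient is the identification of the ramification term
\[
\Omega_{Q_{d,r,l}/Q_{d,l}}\;=\;\bigl(\pi_*\EExt^1(\bar\psi_{d,r,\ul{l}}^*\mathcal{T}_{d,r},\bar\psi_{\ul{d},r,l}^*\mathcal{T}_{r,l})\bigr)^\e ,
\]
which the paper proves by a separate snake-lemma argument on the cotangent sequences of diagram (\ref{hexagon}), using Proposition~\ref{tangentSheaf} for the relative tangent bundles $\mathcal{T}_{Q_{r,l}/Q_l}$ and $\mathcal{T}_{Q_{d,l}/Q_l}$ and, crucially, the characteristic-zero hypothesis to ensure $\psi_{d,\ul{r},l}^*\Omega_{Q_{d,l}}\to\Omega_{Q_{d,r,l}}$ is injective with torsion cokernel. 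It is exactly this torsion correction, fed into the short exact sequence $0\to(\bar{\mathring g}^*\mathring{\mathcal{X}}_\sigma)(0,1)\to\bar{\mathring p}_{d,r,\sigma}^*\mathring{\mathcal{X}}_{r,\sigma}\to\bar{\mathring f}^*\bar\psi_{\ul{d},r,l}^*\mathcal{T}_{r,l}\to 0$ via $\pi_*\HHom(\bar{\mathring f}^*\bar\psi_{d,r,\ul{l}}^*\mathcal{T}_{d,r},-)$, that upgrades the target of the $\EExt^1$ from $\mathring{\mathcal{X}}_\sigma(0,1)$ to the full middle term $\mathring{\mathcal{X}}_{r,\sigma}$ of the universal extension at level $r$ --- which is what makes the induction close. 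Your appeal to ``the torsion part and locally free part of $\bar{\mathring\phi}_{d,r,\sigma}^*\mathcal{X}_{d,\sigma}$'' gestures at the right answer but does not by itself produce this surjection or justify why its kernel is the conormal bundle; without the computation of $\Omega_{Q_{d,r,l}/Q_{d,l}}$ the diagram does not close.
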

\begin{proof}
Since $P_{d,\sigma}$ is a projective bundle over $\mathring{R}_{d,\sigma}$, the relative cotangent
bundle $\Omega_{P_{d,\sigma}/\mathring{R}_{d,\sigma}}$ fits into the following exact sequence
\[
0\to \Omega_{P_{d,\sigma}/\mathring{R}_{d,\sigma}}\to
(a_{d,\sigma}^*\pi_*\EExt^1({\bar{\mathring{q}}}_{d,\sigma}^*\mathcal{T}_{d,l},
{\bar{\mathring{p}}}_{d,\sigma}^*\mathring{\mathcal{X}}_\sigma))^\vee(-1) \to
\mathcal{O}_{P_{d,\sigma}}\to 0
\]
Pulling the sequence back to $R_{d,r,\sigma}$ via $\phi_{d,r,\sigma}$, we obtain an exact sequence
\begin{equation}\label{SES-Tangent-P}
0\to \phi_{d,r,\sigma}^*\Omega_{P_{d,\sigma}/\mathring{R}_{d,\sigma}}\to
(\phi_{d,r,\sigma}^*a_{d,\sigma}^*\pi_*\EExt^1({\bar{\mathring{q}}}_{d,\sigma}^*\mathcal{T}_{d,l},
{\bar{\mathring{p}}}_{d,\sigma}^*\mathring{\mathcal{X}}_\sigma))^\vee(-1) \to
\phi_{d,r,\sigma}^*\mathcal{O}_{P_{d,\sigma}}\to 0
\end{equation}
We can rewrite
$\phi_{d,r,\sigma}^*a_{d,\sigma}^*\pi_*\EExt^1({\bar{\mathring{q}}}_{d,\sigma}^*\mathcal{T}_{d,l},
{\bar{\mathring{p}}}_{d,\sigma}^*\mathring{\mathcal{X}}_\sigma)$ as
\[
\begin{split}
&\ \quad\phi_{d,r,\sigma}^*a_{d,\sigma}^*\pi_*\EExt^1({\bar{\mathring{q}}}_{d,\sigma}^*\mathcal{T}_{d,l}, {\bar{\mathring{p}}}_{d,\sigma}^*\mathring{\mathcal{X}}_\sigma) \\
&=\pi_*\EExt^1(\bar\phi_{d,r,\sigma}^*\bar a_{d,\sigma}^*{\bar{\mathring{q}}}_{d,\sigma}^*\mathcal{T}_{d,l}, \bar\phi_{d,r,\sigma}^*\bar a_{d,\sigma}^*{\bar{\mathring{p}}}_{d,\sigma}^*\mathring{\mathcal{X}}_\sigma)\\
&=\pi_*\EExt^1(\bar f^*\bar\psi_{d,\ul{r},l}^*\mathcal{T}_{d,l},\bar g^*\mathring{\mathcal{X}}_\sigma)
\end{split}
\]
where
\[
f:=(1\times \mathring{q}_{r,\sigma})(1\times a_{r,\sigma}),\quad g:=\mathring{p}_{d,\sigma}a_{d,\sigma}\phi_{d,r,\sigma}.
\]
Set $QQP:=Q_{d,r}\times_{Q_r}Q_{r,l}\times_{Q_l}\mathring{P}_\sigma$. Since $R_{d,r,\sigma}$ is a
projective bundle over $QQP$, the relative cotangent bundle $\Omega_{R_{d,r,\sigma}/QQP}$ fits into
the following exact sequence
\begin{equation}\label{SES-Tangent-R}
0\to \Omega_{R_{d,r,\sigma}/QQP}\to ((1\times a_{r,\sigma})^*(\psi_{\ul{d},r,l}\times
1)^*\pi_*\EExt^1({\bar{\mathring{q}}}_{r,\sigma}^*\mathcal{T}_{r,l},
{\bar{\mathring{p}}}_{r,\sigma}^*\mathring{\mathcal{X}}_\sigma))^\vee(-1) \to
\mathcal{O}_{R_{d,r,\sigma}}\to 0
\end{equation}
We have
\[
(1\times a_{r,\sigma})^*(\psi_{\ul{d},r,l}\times 1)^*\pi_*\EExt^1({\bar{\mathring{q}}}_{r,\sigma}^*\mathcal{T}_{r,l}, {\bar{\mathring{p}}}_{r,\sigma}^*\mathring{\mathcal{X}}_\sigma)=\pi_*\EExt^1(\bar f^*\bar\psi_{\ul{d},r,l}^*\mathcal{T}_{r,l},\bar g^* \mathring{\mathcal{X}}_\sigma)
\]
based on the equalities $\mathring{q}_{r,\sigma}(\psi_{\ul{d},r,l}\times 1)(1\times a_{r,\sigma})=\psi_{\ul{d},r,l}f$ and $\mathring{p}_{r,\sigma}(\psi_{\ul{d},r,l}\times 1)(1\times a_{r,\sigma})=g$.

We make a diagram
\begin{equation}\label{Diagram-TangentQuotient}
\CD{
 & 0\ar[d] & 0\ar[d] \\
 & \mathcal{K}:\ar@{..>}[d]\ar@{=}[r] & (\pi_*\EExt^1(\bar f^*\bar\psi_{d,r,\ul{l}}^*\mathcal{T}_{d,r},\bar
g^*\mathring{\mathcal{X}}_\sigma))^\vee(-1)\ar[d] \\
0\ar[r] & \phi_{d,r,\sigma}^*\Omega_{P_{d,\sigma}/\mathring{R}_{d,\sigma}}\ar[r]\ar@{..>}[d] &
(\pi_*\EExt^1(\bar f^*\bar\psi_{d,\ul{r},l}^*\mathcal{T}_{d,l},\bar
g^*\mathring{\mathcal{X}}_\sigma))^\vee(-1)\ar[r]\ar[d]\ar@{}[dr]|-{\textcircled{\,1}} &
\phi_{d,r,\sigma}^*\mathcal{O}_{P_{d,\sigma}}\ar[r]\ar@{=}[d] & 0 \\
0\ar[r] & \Omega_{R_{d,r,\sigma}/QQP}\ar[r]\ar[d] & (\pi_*\EExt^1(\bar
f^*\bar\psi_{\ul{d},r,l}^*\mathcal{T}_{r,l},\bar g^*
\mathring{\mathcal{X}}_\sigma))^\vee(-1)\ar[r]\ar[d] &
\mathcal{O}_{R_{d,r,\sigma}}\ar[r] & 0 \\
& 0 & 0
}%
\end{equation}
where
\begin{itemize}
\item the last row is the exact sequence (\ref{SES-Tangent-P}),
\item the middle row is the exact sequence (\ref{SES-Tangent-R}), and
\item the middle column is obtained by applying $\pi_*\EExt^1(\bar f^*(-),\bar
g^*\mathring{\mathcal{X}}_\sigma)$ to the exact sequence (\ref{SES-Torsions}), then taking dual,
and lastly twisting by $\mathcal{O}_{R_{d,r,\sigma}}(-1)$.
\end{itemize}
{ The commutativity of the rectangle \textcircled{1} in the diagram (\ref{Diagram-TangentQuotient})
follows from diagram (\ref{Diagram-R(d,r,l)}) and the identifications (\ref{Identification-phi}).}
Thus, we have induced maps (the dotted arrows) in the first column and the first column is exact.
Restricting the first column to $\mathring{R}_{d,r,\sigma}$, we obtain an exact sequence
\[
0\to \mathring{\mathcal{K}}\to
\mathring{\phi}_{d,r,\sigma}^*\Omega_{P_{d,\sigma}/\mathring{R}_{d,\sigma}}\to
\Omega_{\mathring{R}_{d,r,\sigma}/QQP}\to 0
\]
where $\mathring{\mathcal{K}}:=\mathcal{K}|_{\mathring{R}_{d,r,\sigma}}$. We have
\[
\mathring{\mathcal{K}}=(\pi_*\EExt^1(\bar{\mathring{f}}^*\bar\psi_{d,r,\ul{l}}^*\mathcal{T}_{d,r},
\bar{\mathring{g}}^*\mathring{\mathcal{X}}_\sigma))^\vee(-1)
\]
where $\mathring{f}$ and $\mathring{g}$ are the restrictions of $f$ and $g$ to $\mathring{R}_{d,r,\sigma}$.

Recall that $\psi_{d,r,l}: Q_{d,r,l}\to Q_{d,l}$ is a finite morphism. Since $\Bbbk$ is assumed to be of characteristic 0, the function field extension $K(Q_{d,r,l})/K(Q_{d,l})$ is separable. Therefore, we have an exact sequence of relative cotangent sheaves
\[
0\to \psi_{d,\ul{r},l}^*\Omega_{Q_{d,l}}\to \Omega_{Q_{d,r,l}}\to \Omega_{Q_{d,r,l}/Q_{d,l}}\to 0
\]
where both $\Omega_{Q_{d,l}}$ and $\Omega_{Q_{d,r,l}}$ are locally free of the same rank while $\Omega_{Q_{d,r,l}/Q_{d,l}}$ is torsion.

We form the following diagram
\[
\CD{%
& & & 0\ar[d]\\
\psi_{\ul{d},r,l}^*\theta_{r,l}^*\Omega_{Q_l}\ar@{=}[rd] & 0\ar[d] & 0\ar[r]\ar[d] &
(\pi_*\HHom(\bar\psi_{d,r,\ul{l}}^*\mathcal{E}_{d,r},\bar\psi_{\ul{d},r,l}^*\mathcal{T}_{r,l}))^\vee
\ar[d]
\\
0\ar[r] & \psi_{d,\ul{r},l}^*\theta_{d,l}^*\Omega_{Q_l}\ar[r]\ar[d] &
\psi_{d,\ul{r},l}^*\Omega_{Q_{d,l}}\ar[r]\ar[d] &
\psi_{d,\ul{r},l}^*\Omega_{Q_{d,l}/Q_l}\ar[r]\ar[d] & 0 \\
0\ar[r] & \psi_{\ul{d},r,l}^*\Omega_{Q_{r,l}}\ar[r]\ar[d] & \Omega_{Q_{d,r,l}}\ar[r]\ar[d] &
\Omega_{Q_{d,r,l}/Q_{r,l}}\ar[r]\ar[d] & 0 \\
& \psi_{\ul{d},r,l}^*\Omega_{Q_{r,l}/Q_l}\ar@{..>}[r]\ar[d] & \Omega_{Q_{d,r,l}/Q_{d,l}}\ar[r]\ar[d] & 0 \\
& 0 & 0
}%
\]
Applying the Snake Lemma to the third row and the fourth row of the above diagram, we obtain an
exact sequence by connecting the second row with the last row:
\[
0\to
(\pi_*\HHom(\bar\psi_{d,r,\ul{l}}^*\mathcal{E}_{d,r},\bar\psi_{\ul{d},r,l}^*\mathcal{T}_{r,l}))^\vee
\to \psi_{\ul{d},r,l}^*\Omega_{Q_{r,l}/Q_l} \to \Omega_{Q_{d,r,l}/Q_{d,l}} \to 0
\]
This sequence fits into the following commutative diagram
\[
\CD{
0\ar[d] & 0 \ar[d] \\ 
(\pi_*\HHom(\bar\psi_{d,r,\ul{l}}^*\mathcal{E}_{d,r},\bar\psi_{\ul{d},r,l}^*\mathcal{T}_{r,l}))^\vee
\ar@{=}[r]\ar[d] &
(\pi_*\HHom(\bar\psi_{d,r,\ul{l}}^*\mathcal{E}_{d,r},\bar\psi_{\ul{d},r,l}^*\mathcal{T}_{r,l}))^\vee
\ar[d]
\\
\psi_{\ul{d},r,l}^*\Omega_{Q_{r,l}/Q_l} \ar@{=}[r]\ar[d] &
(\pi_*\HHom(\bar\psi_{\ul{d},r,l}^*\mathcal{E}_{r,l},\bar\psi_{\ul{d},r,l}^*\mathcal{T}_{r,l}))^\vee
\ar[d]
\\
\Omega_{Q_{d,r,l}/Q_{d,l}}\ar@{:}[r]\ar[d] &
(\pi_*\EExt^1(\bar\psi_{d,r,\ul{l}}^*\mathcal{T}_{d,r},\bar\psi_{\ul{d},r,l}^*\mathcal{T}_{r,l}))^\e\ar[d]
\\
0 & 0
}%
\]
where the identification in the second row follows from the canonical identification
$\mathcal{T}_{Q_{r,l}/Q_l}=\pi_*\HHom(\bar\psi_{d,r,\ul{l}}^*\mathcal{E}_{d,r},\bar\psi_{\ul{d},r,l}^*\mathcal{T}_{r,l})$.
So we obtain an identification of the quotients
\[
\Omega_{Q_{d,r,l}/Q_{d,l}}=(\pi_*\EExt^1(\bar\psi_{d,r,\ul{l}}^*\mathcal{T}_{d,r},\bar\psi_{\ul{d},r,l}^*\mathcal{T}_{r,l}))^\e
\]

Since $Q_{d,r,l}$ is smooth over $Q_l$, $QQP=Q_{d,r,l}\times_{Q_l}\mathring{P}_\sigma$ is smooth over $\mathring{P}_\sigma$. The morphism $\psi_{d,\ul{r},l}\times 1: QQP\to \mathring{R}_{d,\sigma}$ is obtained from $\psi_{d,\ul{r},l}$ by the base change $\mathring{q}_{d,\sigma}:\mathring{R}_{d,\sigma}\to Q_{d,l}$, hence we have the following commutative diagram
\[
\CD{
0\ar[r] & (\psi_{d,\ul{r},l}\times 1)^*\Omega_{\mathring{P}_{d,\sigma}}\ar[r]\ar@{=}[d] & \Omega_{QQP} \ar[r]\ar@{=}[d] & \Omega_{QQP/\mathring{P}_{d,\sigma}} \ar[r]\ar@{=}[d] & 0 \\
0\ar[r] & (1\times\mathring{q}_{d,\sigma})^*\psi_{d,\ul{r},l}^*\Omega_{Q_{d,l}}\ar[r] & (1\times\mathring{q}_{d,\sigma})^*\Omega_{Q_{d,r,l}}\ar[r] & (1\times\mathring{q}_{d,\sigma})^*\Omega_{Q_{d,r,l}/Q_{d,l}} \ar[r] & 0
}
\]

The conormal bundle $\mathcal{N}^\vee_{\mathring{R}_{d,r,\sigma}/P_{d,\sigma}}$ fits into the
following commutative diagram
\[
\CD{%
& & 0\ar[d] & 0\ar[d]\\
& 0\ar[r]\ar[d] & \mathcal{N}_{\mathring{R}_{d,r,\sigma}/P_{d,\sigma}}^\vee\ar[r]\ar[d] &  \mathring{\mathcal{K}}\ar[d] \\
0\ar[r] & \mathring{\phi}_{d,r,\sigma}^*a_{d,\sigma}^*\Omega_{\mathring{R}_{d,\sigma}}\ar[r]\ar[d]
& \mathring{\phi}_{d,r,\sigma}^*\Omega_{P_{d,\sigma}}\ar[r]\ar[d] &
\mathring{\phi}_{d,r,\sigma}^*\Omega_{P_{d,\sigma}/\mathring{R}_{d,\sigma}}\ar[r]\ar[d] & 0\\
0\ar[r] & (1\times \mathring{a}_{r,\sigma})^*\Omega_{QQP}\ar[r]\ar[d] &
\Omega_{\mathring{R}_{d,r,\sigma}} \ar[r]\ar[d]
& \Omega_{\mathring{R}_{d,r,\sigma}/QQP}\ar[r]\ar[d] & 0\\
 & \mathring{f}^*\Omega_{Q_{d,r,l}/Q_{d,l}}\ar[r]\ar[d] & 0 & 0 \\
& 0
}%
\]
Applying the Snake Lemma to the third and fourth rows, we obtain an exact sequence by connecting
the second row with the fifth row:
\[
0 \to \mathcal{N}_{\mathring{R}_{d,r,\sigma}/P_{d,\sigma}}^\vee \to \mathring{\mathcal{K}} \to
\mathring{f}^*\Omega_{Q_{d,r,l}/Q_{d,l}} \to 0
\]

The above sequence fits into the following commutative diagram
\[
\CD{
0\ar[d] & 0\ar[d] \\ 
\mathcal{N}_{\mathring{R}_{d,r,\sigma}/P_{d,\sigma}}^\vee\ar@{:}[r]\ar[d] & (\pi_*\EExt^1(\bar{\mathring{f}}^*\bar\psi_{d,r,\ul{l}}^*\mathcal{T}_{d,r}, \bar{\mathring{p}}_{d,r,\sigma}^*\mathring{\mathcal{X}}_{r,\sigma}))^\vee  \ar[d]\\
\mathring{\mathcal{K}} \ar@{=}[r]\ar[d] & (\pi_*\EExt^1(\bar{\mathring{f}}^*\bar\psi_{d,r,\ul{l}}^*\mathcal{T}_{d,r}, \bar{\mathring{g}}^*\mathring{\mathcal{X}}_\sigma))^\vee(-1) \ar[d] \\
\mathring{f}^*\Omega_{Q_{d,r,l}/Q_{d,l}}\ar@{=}[r]\ar[d] & (\pi_*\EExt^1(\bar{\mathring{f}}^*\bar\psi_{d,r,\ul{l}}^*\mathcal{T}_{d,r}, \bar{\mathring{f}}^*\bar\psi_{\ul{d},r,l}^*\mathcal{T}_{r,l}))^\e\ar[d] \\
0 & 0
}
\]
where the second column is obtained as follows: first we can rewrite the last row of {diagram (\ref{Diagram-R(d,r,l)})} as
\[
0\to (\bar g^*\mathring{\mathcal{X}}_\sigma)(0,1)\to \bar p_{d,r,\sigma}^*\mathcal{X}_{r,\sigma}\to \bar f^*\bar \psi_{\ul{d},r,l}\mathcal{T}_{r,l}\to 0;
\]
next the restriction to $\mathbb{P}^1\times\mathring{R}_{d,r,\sigma}$ is
\[
0\to (\bar{\mathring{g}}^*\mathring{\mathcal{X}}_\sigma)(0,1)\to \bar{\mathring{p}}_{d,r,\sigma}^*\mathring{\mathcal{X}}_{r,\sigma}\to \bar{\mathring{f}}^*\bar \psi_{\ul{d},r,l}\mathcal{T}_{r,l}\to 0;
\]
and next applying $\pi_*\HHom(\bar{\mathring{f}}^*\bar\psi_{d,r,\ul{l}}^*\mathcal{T}_{d,r},-)$ to
the above sequence, and lastly taking dual, we obtain the second column in the diagram. Thus there
is an induced identification as in the first row, which gives the natural identification
\[
\mathcal{N}_{\mathring{R}_{d,r,\sigma}/P_{d,\sigma}}=\pi_*\EExt^1(\bar{\mathring{f}}^*\bar\psi_{d,r,\ul{l}}^*\mathcal{T}_{d,r}, \bar{\mathring{p}}_{d,r,\sigma}^*\mathring{\mathcal{X}}_{r,\sigma})=\pi_*\EExt^1(\bar{\mathring{q}}^*_{d,r,\sigma}\mathcal{T}_{d,r}, \bar{\mathring{p}}_{d,r,\sigma}^*\mathring{\mathcal{X}}_{r,\sigma})
\]
\end{proof}

\section{The modular interpretation}

The compactification $\widetilde Q_d$ is obtained by successively blowing up the Quot scheme $Q_d$ along $Z_{d,0}, \cdots, Z_{d,d-1}$.
We illustrate the process in the following diagram
\[\label{blowupdiagram}
\begin{array}{cccccccccccl}
Z_{d,0}^{d-1} &  & Z_{d,1}^{d-1} & & Z_{d,2}^{d-1} & & \cdots &         & Z_{d,d-1}^{d-1} & \subset & Q_d^{d-1}=\widetilde Q_d & \\
\downarrow & & \downarrow & & \downarrow &        &  &         &  \downarrow &         & \downarrow &\leftarrow\text{ along }{Z_{d,d-1}^{d-2}} \\
\vdots     & & \vdots  & & \vdots   &        & \vdots&         &  \vdots     &         & \vdots & \\
\downarrow & & \downarrow & & \downarrow &        &  &         &  \downarrow &         & \downarrow &\leftarrow\text{ along }{Z_{d,2}^1} \\
Z_{d,0}^1 &  & Z_{d,1}^1 & & Z_{d,2}^1 & \subset & \cdots & \subset & Z_{d,d-1}^1 & \subset & Q_d^1 & \\
\downarrow & & \downarrow & & \downarrow &        &  &         &  \downarrow &         & \downarrow &\leftarrow\text{ along }{Z_{d,1}^0} \\
Z_{d,0}^0 &  & Z_{d,1}^0 & \subset & Z_{d,2}^0 & \subset & \cdots & \subset & Z_{d,d-1}^0 & \subset & Q_d^0 & \\
\downarrow & & \downarrow & & \downarrow &        &  &         &  \downarrow &         & \downarrow &\leftarrow\text{ along }{Z_{d,0}} \\
Z_{d,0} & \subset & Z_{d,1} & \subset & Z_{d,2} & \subset & \cdots & \subset & Z_{d,d-1} & \subset
& Q_d &
\end{array}
\]
Here, inductively,  $Z_{d,0}^j, \cdots, Z_{d,j}^j$ are the exceptional divisors
created by the sequence of blowups  $Q_d^j \to Q_d$ ($0 \le j \le d-1$); the nested subschemes
$Z_{d,j+1}^j \subset \cdots \subset Z_{d,d-1}^j$
are the proper transforms of
the subschemes $Z_{d,j+1}^{j-1} \subset \cdots \subset Z_{d,d-1}^{j-1}$ (respectively); these are the subschemes lining up
to be  blown-up in the next steps. Below, we will provide modular meanings to the points of $Z_{d,0}^j, \cdots, Z_{d,j}^j$ for all $j$. Thus
every intermediate space $Q_d^j$, $j=0,\dots,d-1$, also compactification of $\mathring{Q}_d$, admits parameter space interpretation
(Proposition. \ref{stratum}).
The case $Q_d^{d-1}$ is our final space $\widetilde Q_d$ (Corollary \ref{final-interp}).


To proceed, we introduce the following
\[
\hat Z_{d,r}^l:=\left\{\begin{array}{ll} \varnothing, & \text{ if } r=l \\ Z_{d,r}^l, & \text{ if } l<r<d \end{array}\right.,\quad \mathring{Z}_{d,r}^l:=Z_{d,r}^l\setminus \hat Z_{d,r-1}^l, \quad r=l+1,\dots,d-1
\]

Let $d>r\geq 0$. For each subsequence $\sigma\subset[r]:=(r,r-1,\cdots,1,0)\in\Sigma$, we set
\[
E_{d,\sigma}^r:=\bigcap_{i\in \sigma}Z_{d,i}^r\setminus \bigcup_{i\in [r]\setminus \sigma}Z_{d,i}^r.
\]
Then there is a stratification of $Q_d^r$:
\[
Q_d^r=(Q_d^r\setminus \bigcup_{i\in[r]} Z_{d,i}^r)\sqcup \bigsqcup_{\sigma\subset[r]}E_{d,\sigma}^r =(Q_d\setminus Z_{d,r})\sqcup \bigsqcup_{\sigma\subset[r]}E_{d,\sigma}^r.
\]

\begin{lemma}\label{lem_E}
Let $\tau\in \Sigma$, $t=\lt(\tau)$. We have
\begin{enumerate}
\item $E_{d,\tau}^j=E_{d,\tau}^m\setminus Z_{d,j}^m$, for any $m$ and $j$ with $t\leq m<j<d$.
\item if $l>t$ and $\sigma=(l,\tau)$, then $E_{d,\sigma}^l$ is the exceptional divisor of the blowup of $E_{d,\tau}^t\setminus \hat Z_{d,l-1}^t$ along $E_{d,\tau}^t\cap \mathring{Z}_{d,l}^t$.
\end{enumerate}
\end{lemma}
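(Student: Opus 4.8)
The plan is to prove the two parts in turn, with part~(2) built on part~(1). Write $b_j\colon Q_d^j\to Q_d^{j-1}$ for the $j$-th blowup in the tower, so its center is $Z_{d,j}^{j-1}$, its exceptional divisor is $Z_{d,j}^j$, and for $i<j$ the divisor $Z_{d,i}^j$ is the proper transform of $Z_{d,i}^{j-1}$. For part~(1) I would first treat the case $j=m+1$. There $b_{m+1}$ restricts to an isomorphism of $Q_d^{m+1}\setminus Z_{d,m+1}^{m+1}$ onto $Q_d^m\setminus Z_{d,m+1}^m$, under which $Z_{d,i}^{m+1}$ corresponds to $Z_{d,i}^m$ for every $i\le m$. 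Since $\lt(\tau)=t\le m$ we have $m+1\in[m+1]\setminus\tau$, so $E_{d,\tau}^{m+1}$ lies in the isomorphism locus; transporting its defining intersection-and-removal across the isomorphism and writing $[m+1]\setminus\tau=\{m+1\}\sqcup([m]\setminus\tau)$ gives $E_{d,\tau}^{m+1}=E_{d,\tau}^m\setminus Z_{d,m+1}^m$. For general $j$ I would iterate this, using that the proper transforms $Z_{d,m+1}^m\subset Z_{d,m+2}^m\subset\cdots\subset Z_{d,d-1}^m$ are still nested (proper transform preserves inclusion, and $Z_{d,m+1}\subset\cdots\subset Z_{d,d-1}$ in $Q_d$), so the successive removals collapse to the single removal of $Z_{d,j}^m$. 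This part is essentially formal.

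For part~(2), the first step is to move everything into $Q_d^{l-1}$ by means of part~(1): one has $E_{d,\tau}^t\setminus\hat Z_{d,l-1}^t=E_{d,\tau}^{l-1}$ (using part~(1) when $t<l-1$, trivially when $t=l-1$), and under the isomorphism of part~(1) the locus $E_{d,\tau}^t\cap\mathring Z_{d,l}^t$ is carried to $C:=E_{d,\tau}^{l-1}\cap Z_{d,l}^{l-1}$. Now $Q_d^l=\Bl_{Z_{d,l}^{l-1}}Q_d^{l-1}$ via $b_l$, and by Theorem~\ref{hls} the center $Z_{d,l}^{l-1}$ is smooth. I would restrict attention to the open set $U:=Q_d^{l-1}\setminus\bigcup_{i\in[l-1]\setminus\tau}Z_{d,i}^{l-1}$, on which $E_{d,\tau}^{l-1}=\bigcap_{i\in\tau}Z_{d,i}^{l-1}\cap U$ is closed; since blowing up commutes with restriction to opens, $b_l^{-1}(U)=\Bl_{Z_{d,l}^{l-1}\cap U}U$.

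The core of part~(2) is the set-level identity $E_{d,\sigma}^l=b_l^{-1}(C)$. I would begin by observing that for every $i<l$ one has $Z_{d,l}^{l-1}\not\subset Z_{d,i}^{l-1}$: the images of these in $Q_d$ are $Z_{d,l}$ and $Z_{d,i}$, and $Z_{d,l}\not\subset Z_{d,i}$ since $Z_{d,i}\subsetneq Z_{d,l}$. Hence the pullback $b_l^*Z_{d,i}^{l-1}$ has no component along the exceptional divisor, so $Z_{d,i}^l$ and $b_l^{-1}(Z_{d,i}^{l-1})$ have the same support. It follows that any point of $E_{d,\sigma}^l$ mapping into some $Z_{d,i}^{l-1}$ with $i\in[l-1]\setminus\tau$ would lie in $Z_{d,i}^l$, contradicting the definition of $E_{d,\sigma}^l$; thus $E_{d,\sigma}^l\subset b_l^{-1}(U)$. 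Over $b_l^{-1}(U)$ the divisors $Z_{d,i}^l$ with $i\in[l-1]\setminus\tau$ are empty, while $\bigcap_{i\in\tau}Z_{d,i}^l$ coincides with $b_l^{-1}(\bigcap_{i\in\tau}Z_{d,i}^{l-1})=b_l^{-1}(E_{d,\tau}^{l-1})$; therefore $E_{d,\sigma}^l=Z_{d,l}^l\cap b_l^{-1}(E_{d,\tau}^{l-1})=b_l^{-1}(Z_{d,l}^{l-1}\cap E_{d,\tau}^{l-1})=b_l^{-1}(C)$. Finally I would identify $b_l^{-1}(C)$ with the exceptional divisor of $\Bl_C(E_{d,\tau}^{l-1})$: since $Z_{d,l}^{l-1}$ is smooth, $b_l^{-1}(C)=\mathbb P(\mathcal N_{Z_{d,l}^{l-1}/Q_d^{l-1}}|_C)$, and the transversality of the (resolved, smooth) proper transform $Z_{d,l}^{l-1}$ with the simple normal crossing configuration of exceptional divisors $Z_{d,i}^{l-1}$ $(i<l)$ — in particular with $E_{d,\tau}^{l-1}$ along $C$ — yields $\mathcal N_{Z_{d,l}^{l-1}/Q_d^{l-1}}|_C\cong\mathcal N_{C/E_{d,\tau}^{l-1}}$, so $b_l^{-1}(C)=\mathbb P(\mathcal N_{C/E_{d,\tau}^{l-1}})$ is exactly the exceptional divisor of the blowup of $E_{d,\tau}^{l-1}=E_{d,\tau}^t\setminus\hat Z_{d,l-1}^t$ along $C=E_{d,\tau}^t\cap\mathring Z_{d,l}^t$.

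The hard part will be precisely this transversality input. Identifying $b_l^{-1}(C)$ with the intrinsic exceptional divisor $\mathbb P(\mathcal N_{C/E_{d,\tau}^{l-1}})$ — and, relatedly, upgrading the set-level identity $E_{d,\sigma}^l=b_l^{-1}(C)$ to an identity of schemes — requires knowing that $Z_{d,l}^{l-1}$ meets the normal crossing stack of exceptional divisors transversally along the relevant strata; this is not formal and must be extracted from the explicit local description of the blowup tower in \cite{Shao}, which carries essentially the same information that yields the simple normal crossing conclusion of Theorem~\ref{hls}. This is also where the smoothness of the centers enters, explaining why the statement is phrased with $\mathring Z_{d,l}^t$ and $E_{d,\tau}^t\setminus\hat Z_{d,l-1}^t$ rather than with $Z_{d,l}^t$ and $E_{d,\tau}^t$.
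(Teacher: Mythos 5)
Your proposal is correct and follows essentially the same route as the paper: part (1) by descending through the blowup tower on the loci where the maps are isomorphisms and using the nesting of the proper transforms, and part (2) by reducing to the single blowup $b_l\colon Q_d^l\to Q_d^{l-1}$ and identifying $E_{d,\sigma}^l$ with the exceptional divisor of the induced blowup of $E_{d,\tau}^{l-1}$ along $E_{d,\tau}^{l-1}\cap Z_{d,l}^{l-1}$. The only difference is one of explicitness: where you isolate the transversality of $Z_{d,l}^{l-1}$ with the stratum $E_{d,\tau}^{l-1}$ (and the agreement of total and proper transforms on the relevant loci) as the nontrivial input to be imported from \cite{Shao}, the paper simply asserts that the preimage of $E_{d,\tau}^{l-1}$ is ``the (induced) blowup'' along its intersection with the center --- the same fact, stated without justification.
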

\begin{proof}
{
(1) Consider the composite blowup $b: Q_d^j\to Q_d^m$, $m<j$. We have $b^{-1}(Z_{d,i}^m)=Z_{d,i}^j$ for $i=0,\dots,m$, and $b^{-1}(Z_{d,j}^m)=\bigcup_{i=m+1}^j Z_{d,i}^j$. Moreover, $b$ is an isomorphism away from $Z_{d,j}^m$. These facts give us identifications
\[
Q_d^j\setminus b^{-1}(Z_{d,j}^m)=Q_d^m\setminus Z_{d,j}^m,\text{ and } Z_{d,i}^j\setminus b^{-1}(Z_{d,j}^m) = Z_{d,i}^m\setminus Z_{d,j}^m, \text{ for }i=0,\dots,m
\]
Therefore
\[
\begin{split}
E_{d,\tau}^j& =\bigcap_{i\in\tau}Z_{d,i}^j\setminus \bigcup_{i\in[j]\setminus\tau} Z_{d,i}^j=\bigg(\bigcap_{i\in\tau}Z_{d,i}^j\setminus \bigcup_{i\in[m]\setminus\tau} Z_{d,i}^j\bigg)\setminus \bigcup_{i=m+1}^j
Z_{d,i}^j \\
& = \bigg(\bigcap_{i\in\tau}Z_{d,i}^j\setminus \bigcup_{i\in[m]\setminus\tau} Z_{d,i}^j\bigg)\setminus b^{-1}(Z_{d,j}^m) \\
& = \bigg(\bigcap_{i\in\tau}Z_{d,i}^m\setminus \bigcup_{i\in[m]\setminus\tau} Z_{d,i}^m\bigg)\setminus Z_{d,j}^m = E_{d,\tau}^m\setminus Z_{d,j}^m
\end{split}
\]

(2) By definition,
\[
E_{d,\sigma}^l=\bigcap_{i\in\sigma}Z_{d,i}^l\setminus \bigcup_{i\in [l]\setminus\sigma}Z_{d,i}^l = Z_{d,l}^l\cap \bigcap_{i\in\tau}Z_{d,i}^l\setminus \bigcup_{i\in [l]\setminus\sigma}Z_{d,i}^l
\]
We know $Z_{d,l}^l$ is the exceptional divisor of the blowup $Q_d^l\to Q_d^{l-1}$, which is along $Z_{d,l}^{l-1}$. Since $(\bigcap_{i\in\tau}Z_{d,i}^l\setminus \bigcup_{i\in [l]\setminus\sigma}Z_{d,i}^l)\subset Q_d^l$ is exactly the preimage of  $\bigcap_{i\in\tau}Z_{d,i}^{l-1}\setminus \bigcup_{i\in [l]\setminus\sigma}Z_{d,i}^{l-1}\subset Q_d^{l-1}$ under the blowup, we have that $E_{d,\sigma}^l$ is the exceptional divisor of the (induced) blowup of $\bigcap_{i\in\tau}Z_{d,i}^{l-1}\setminus \bigcup_{i\in [l]\setminus\sigma}Z_{d,i}^{l-1}$ along $Z_{d,l}^{l-1}\cap\bigcap_{i\in\tau}Z_{d,i}^{l-1}\setminus \bigcup_{i\in [l]\setminus\sigma}Z_{d,i}^{l-1}$. Note that
\[
\bigcap_{i\in\tau}Z_{d,i}^{l-1}\setminus \bigcup_{i\in [l]\setminus\sigma}Z_{d,i}^{l-1}=\bigcap_{i\in\tau}Z_{d,i}^{l-1}\setminus \bigg(\bigcup_{i\in [t]\setminus\tau}Z_{d,i}^{l-1}\cup \bigcup_{i=t+1}^{l-1}Z_{d,i}^{l-1}\bigg)=E_{d,\tau}^{l-1}\setminus \bigcup_{i=t+1}^{l-1}Z_{d,i}^{l-1}
\]
If $l-1=t$, then $E_{d,\sigma}^l$ is the exceptional divisor of the blowup of $E_{d,\tau}^t$ along $Z_{d,t+1}^t$, and we are done. Now suppose $l-1>t$. Then under the identification $Q_d^{l-1}\setminus \bigcup_{i=t+1}^{l-1}Z_{d,i}^{l-1}=Q_d^t\setminus Z_{d,l-1}^t$, we have identifications $E_{d,\tau}^{l-1}\setminus \bigcup_{i=t+1}^{l-1}Z_{d,i}^{l-1} = E_{d,\tau}^t\setminus Z_{d,l-1}^t$ and
$$Z_{d,l}^{l-1}\setminus \bigcup_{i=t+1}^{l-1}Z_{d,i}^{l-1}= Z_{d,l}^t\setminus Z_{d,l-1}^t=\mathring{Z}_{d,l}^t.$$}
\end{proof}

\begin{prop}
There is a collection of isomorphisms
\[
i_{d,\sigma}: P_{d,\sigma}\stackrel{\sim}{\to} E_{d,\sigma}^l,
\]
one for each $\sigma\in\Sigma$ with $l:=\lt(\sigma)<d$, such that the following properties hold:
\begin{enumerate}
\item $i_{d,\sigma}$ maps $Y_{d,r,\sigma}$ onto $E_{d,\sigma}^l\cap Z_{d,r}^l$ for all $r$, $l<r<d$;;
\item $i_{d,\sigma}$ maps $\mathring{Y}_{d,r,\sigma}$ isomorphically onto $E_{d,\sigma}^l\cap \mathring{Z}_{d,r}^l$ for all $r$, $l<r<d$;
\item The following diagram commutes:
\[
\xymatrix{
P_{d,\sigma}\ar[d]\ar[rr]^{i_{d,\sigma}}_{\sim} & & E_{d,\sigma}^l\ar[d]\\
\mathring{R}_{d,l,\tau}\ar[r]^{\sim} & \mathring{Y}_{d,l,\tau}\ar[r]^-{\sim}_-{i_{d,\tau}} & E_{d,\tau}^t\cap \mathring{Z}_{d,l}^t
}
\]
where $\sigma=(l,\tau)$ and $t=\lt(\tau)$;
\item Let $e_{d,\sigma}: P_{d,\sigma}\embed Q_d^l$ be the embedding obtained through the composition $P_{d,\sigma}\stackrel{\sim}{\to} E_{d,\sigma}^l\embed Q_d^l$. Then the following diagram commutes
\[
\xymatrix@C=3pc{
Q_{d,r}\times_{Q_r}P_{r,\sigma}\ar@{^(->}[r]^-{1\times e_{r,\sigma}}\ar[d]_{\phi_{d,r,\sigma}} & Q_{d,r}\times_{Q_r}Q_{r}^l \ar[d]^{\phi_{d,r}^l} \\
P_{d,\sigma}\ar@{^(->}[r]^-{e_{d,\sigma}} & Q_{d,\sigma}^l
}
\]
for all $r$, $l<r<d$.
\end{enumerate}
\end{prop}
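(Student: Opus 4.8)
The plan is to build the isomorphisms $i_{d,\sigma}$ by induction on $|\sigma|$, exploiting that Lemma~\ref{lem_E} realizes $E_{d,\sigma}^l$ as the exceptional divisor of a blow-up whose ambient space and center have, by the inductive hypothesis together with the results of Sections~4--6, already been identified with an open subset of $P_{d,\tau}$ and with $\mathring R_{d,l,\tau}$; the exceptional divisor of that blow-up is the projectivized normal bundle $\mathbb{P}(\mathcal N_{\mathring R_{d,l,\tau}/P_{d,\tau}})$, and Proposition~\ref{Prop-NormalBundle} identifies this bundle with the very bundle whose projectivization \emph{defines} $P_{d,\sigma}$.

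\emph{Base case} $|\sigma|=1$, $\sigma=(l)$ with $l<d$. Since $Z_{d,0}\subset\dots\subset Z_{d,l-1}$, the composite blow-up $Q_d^{l-1}\to Q_d$ is an isomorphism over $Q_d\setminus Z_{d,l-1}$ and carries the open part of $Z_{d,l}^{l-1}$ there onto $\mathring Z_{d,l}=Z_{d,l}\setminus Z_{d,l-1}$; hence (the $\tau=\varnothing$ case of Lemma~\ref{lem_E}(2)) $E_{d,(l)}^l$ is the exceptional divisor of $\Bl_{\mathring Z_{d,l}}(Q_d\setminus Z_{d,l-1})$. As $\mathring Z_{d,l}\cong\mathring Q_{d,l}$ is smooth in the smooth variety $Q_d$, this is $\mathbb{P}(\mathcal N_{\mathring Z_{d,l}/Q_d})$; pulling back along $\mathring\varphi_{d,l}$ and applying Proposition~\ref{Prop-NormalBundle0} rewrites $\mathcal N_{\mathring Z_{d,l}/Q_d}$ as $\pi_*\EExt^1(\bar{\mathring q}_{d,l}^*\mathcal T_{d,l},\bar{\mathring p}_{d,l}^*\mathring{\mathcal X}_l)$, whose projectivization is $P_{d,(l)}$ by definition. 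Take $i_{d,(l)}$ to be this composite; properties (1)--(4) in this case follow by the same arguments given below for the inductive step, with Proposition~\ref{Prop-NormalBundle0} in place of Proposition~\ref{Prop-NormalBundle}.

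\emph{Inductive step}. Let $\sigma=(l,\tau)$, $t=\lt(\tau)<l<d$, and assume $i_{d,\tau}$ with (1)--(4) is given. Property~(1) for $\tau$ (taken at $r=l-1$, or the trivial $Y_{d,t,\tau}=\varnothing$ when $l-1=t$) identifies $E_{d,\tau}^t\setminus\hat Z_{d,l-1}^t$ with the open subscheme $P_{d,\tau}\setminus Y_{d,l-1,\tau}$, while property~(2) for $\tau$ (at $r=l$) composed with the isomorphism $\mathring\varphi_{d,l,\tau}:\mathring R_{d,l,\tau}\stackrel{\sim}{\to}\mathring Y_{d,l,\tau}$ identifies the closed subscheme $E_{d,\tau}^t\cap\mathring Z_{d,l}^t$ with $\mathring R_{d,l,\tau}=\mathring R_{d,\sigma}$. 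Both are smooth (every $P_\sigma$ is nonsingular, and $\mathring R_{d,l,\tau}$ is a projective bundle over a fiber product of smooth schemes). By Lemma~\ref{lem_E}(2), $E_{d,\sigma}^l$ is the exceptional divisor of the blow-up of $P_{d,\tau}\setminus Y_{d,l-1,\tau}$ along $\mathring R_{d,l,\tau}$, hence equals $\mathbb{P}(\mathcal N_{\mathring R_{d,l,\tau}/P_{d,\tau}})$; by Proposition~\ref{Prop-NormalBundle} this is $\mathbb{P}(\pi_*\EExt^1(\bar{\mathring q}_{d,l,\tau}^*\mathcal T_{d,l},\bar{\mathring p}_{d,l,\tau}^*\mathring{\mathcal X}_{l,\tau}))$, which is $P_{d,\sigma}$ by the inductive definition of the $P$-spaces, the projections matching since $\mathring R_{d,l,\tau}=\mathring R_{d,\sigma}$ and $\mathcal X_{l,\tau}=\mathcal X_\sigma$. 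Let $i_{d,\sigma}$ be the resulting isomorphism. Property~(3) is then automatic: under $i_{d,\sigma}$ the bundle projection $a_{d,\sigma}:P_{d,\sigma}\to\mathring R_{d,\sigma}$ becomes the projection of the exceptional divisor onto its center, which was identified with $\mathring R_{d,l,\tau}\stackrel{\mathring\varphi_{d,l,\tau}}{\to}\mathring Y_{d,l,\tau}\stackrel{i_{d,\tau}}{\to}E_{d,\tau}^t\cap\mathring Z_{d,l}^t$.

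It remains to verify (1), from which (2) follows by deleting $\hat Z_{d,r-1}^l$. For $l<r<d$, inside $E_{d,\sigma}^l=\mathbb{P}(\mathcal N_{\mathring R_{d,l,\tau}/P_{d,\tau}})$ one has, on one side, the intersection with the proper transform $Z_{d,r}^l$, which by the blow-up formalism is the projectivized normal cone of the center $\mathring R_{d,l,\tau}$ inside the proper transform of $E_{d,\tau}^t\cap Z_{d,r}^t$ --- identified via property~(1) for $\tau$ with $Y_{d,r,\tau}\setminus Y_{d,l-1,\tau}\subset P_{d,\tau}$ (note $\mathring Y_{d,l,\tau}\subset Y_{d,r,\tau}$ since $l<r$); on the other side, the subscheme $Y_{d,r,\sigma}\subset P_{d,\sigma}$ cut out by the ideal (\ref{subscheme}) (the vanishing $R^1\pi_*(\mathcal X_\sigma(-1))=0$ ensures that the hypothesis (\ref{assumption}) of Section~3 is met for $P_{d,\sigma}$). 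Both are the loci where the universal sheaf $\mathcal X_\sigma^\e$ drops torsion degree to $d-r$ in the sense of Proposition~\ref{FlatteningStratification} and Corollary~\ref{cor_rank}, and one checks that they agree scheme-theoretically by transporting the explicit description of the ideal of $Z_{d,r}$ from \cite{Shao}, Sections~3 and 4, through the blow-up, in the same manner that the set-theoretic identity $\mathring\phi_{d,r,\sigma}(\mathring R_{d,r,\sigma})=\mathring Y_{d,r,\sigma}$ was obtained earlier. This matching of ideal sheaves --- the compatibility of the two flattening stratifications across a single blow-up --- is the main obstacle. Finally, (4) is the compatibility of the morphism $\phi_{d,r,\sigma}$ of Section~6 with the level-$l$ blow-up maps: both composites $Q_{d,r}\times_{Q_r}P_{r,\sigma}\to Q_d^l$ in the square are determined, through Theorem~\ref{thm-universal} and the universal properties of the Quot schemes, by a single universal extension on $\mathbb{P}^1\times(Q_{d,r}\times_{Q_r}P_{r,\sigma})$ --- the one assembled in diagram~(\ref{Diagram-R(d,r,l)}) --- so the square commutes by base change, using property~(4) for $\tau$ and the functoriality of the constructions in diagram~(\ref{hexagon}).
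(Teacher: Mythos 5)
Your construction of the isomorphisms $i_{d,\sigma}$ themselves coincides with the paper's: induction on $|\sigma|$, Lemma \ref{lem_E}(2) to realize $E_{d,\sigma}^l$ as the exceptional divisor of a blow-up along $E_{d,\tau}^t\cap\mathring Z_{d,l}^t\cong\mathring R_{d,l,\tau}$, and Propositions \ref{Prop-NormalBundle0} and \ref{Prop-NormalBundle} to identify the projectivized normal bundle with $P_{d,\sigma}$; property (3) then comes for free from the bundle projection, exactly as in the paper.

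The gap is in your treatment of (1) and (2). For (1) you propose to match, scheme-theoretically, the trace of the proper transform $E_{d,\sigma}^l\cap Z_{d,r}^l$ with the subscheme $Y_{d,r,\sigma}$ cut out by the ideal (\ref{subscheme}) ``by transporting the explicit description of the ideal of $Z_{d,r}$ through the blow-up,'' and you yourself label this matching ``the main obstacle''---but you never carry it out, so the argument is incomplete precisely at its hardest point. Moreover (1) only asserts a set-theoretic surjection, and your claim that (2) ``follows by deleting $\hat Z_{d,r-1}^l$'' does not work: (2) asserts that $i_{d,\sigma}$ restricts to an \emph{isomorphism} onto $E_{d,\sigma}^l\cap\mathring Z_{d,r}^l$, and removing a closed subset from a surjection does not upgrade it to an isomorphism. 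The paper sidesteps both issues by running the argument through the commutative square of (4): since $\Img(\phi_{d,r,\sigma})=Y_{d,r,\sigma}$, $\Img(1\times e_{r,\sigma})=Q_{d,r}\times_{Q_r}E_{r,\sigma}^l$, and $\phi_{d,r}^l$ carries $Q_{d,r}\times_{Q_r}E_{r,\sigma}^l$ onto $E_{d,\sigma}^l\cap Z_{d,r}^l$, one reads off (1) as a statement about images; and (2) follows from the same square restricted over $Q_d^{r-1}\setminus\bigcup_{i=l+1}^{r-1}Z_{d,i}^{r-1}$, using that $\phi_{d,r,\sigma}$ restricts to the isomorphism $Q_{d,r}\times_{Q_r}\mathring P_{r,\sigma}\stackrel{\sim}{\to}\mathring Y_{d,r,\sigma}$ and that $\phi_{d,r}^{r-1}$ maps $Q_{d,r}\times_{Q_r}E_{r,\sigma}^{r-1}$ isomorphically onto $E_{d,\sigma}^{r-1}\cap Z_{d,r}^{r-1}$. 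You would need either to supply the ideal-sheaf comparison you deferred, or---better---to switch to this image-based argument, which also makes (4) a prerequisite for (1)--(2) rather than an afterthought.
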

\begin{proof}
We prove by constructing the isomorphisms $i_{d,\sigma}$ explicitly, and this is done by induction on the length of $\sigma$. We first deal with the base case: $\sigma\in \Sigma_1$ or $\sigma=(l)$. In this case, we construct isomorphisms
\[
P_{d,l}\stackrel{\sim}{\to} E_{d,l}^l=Z_{d,l}^l\setminus \bigcup_{0\leq i\leq l-1}Z_{d,i}^l
\]
which map $Y_{d,r,l}$ onto $E_{d,l}^l\cap Z_{d,r}^l$ and map $\mathring{Y}_{d,r,l}$ isomorphically onto $E_{d,l}^l\cap \mathring{Z}_{d,r}^l$ for all $d$, $r$ and $l$ such that $l<r<d$.


Recall that $Z_{d,l}^l$ is the exceptional divisor in the blowup $b: Q_d^l\to Q_d^{l-1}$ along $Z_{d,l}^{l-1}$. We have $b^{-1}(Z_{d,i}^{l-1})=Z_{d,i}^l$ for $i=0,\dots, l-1$, hence
\[
b: Q_d^l\setminus \bigcup_{i=0}^{l-1}Z_{d,i}^l\to Q_d^{l-1}\setminus \bigcup_{i=0}^{l-1}Z_{d,i}^{l-1}
\]
is the blowup along $Z_{d,l}^{l-1}\setminus \bigcup_{i=0}^{l-1}Z_{d,i}^{l-1}$ with exceptional divisor $Z_{d,l}^l\setminus \bigcup_{i=0}^{l-1}Z_{d,i}^l=E_{d,l}^l$. Note that $Q_d^{l-1}\setminus \bigcup_{i=0}^{l-1}Z_{d,i}^{l-1}=Q_d\setminus Z_{d,l-1}$ and $Z_{d,l}^{l-1}\setminus \bigcup_{i=0}^{l-1}Z_{d,i}^{l-1}=Z_{d,l}\setminus Z_{d,l-1}=\mathring{Z}_{d,l}$. Therefore $E_{d,l}^l$ is the projective bundle $\mathbb{P}(\mathcal{N}_{\mathring{Z}_{d,l}/Q_d})$ over $\mathring{Z}_{d,l}$. Since $\mathring{Z}_{d,l}\simeq \mathring{Q}_{d,l}$ and by Proposition \ref{Prop-NormalBundle0}
\[
\mathcal{N}_{\mathring{Q}_{d,l}/Q_d}=\pi_*\EExt^1(\mathcal{T}_{d,l},\bar\theta_{d,l}^*\mathcal{F}_l) |_{\mathring{Q}_{d,l}}=\pi_*\EExt^1(\bar{\mathring{q}}^*_{d,l}\mathcal{T}_{d,l}, \bar{\mathring{p}}_{d,l}^*\mathring{\mathcal{X}}_{l}),
\]
we know $E_{d,l}^l$ is isomorphic to the projective bundle
\[
P_{d,l}=\mathbb{P}(\pi_*\EExt^1(\bar{\mathring{q}}^*_{d,l}\mathcal{T}_{d,l}, \bar{\mathring{p}}_{d,l}^*\mathring{\mathcal{X}}_{l}))
\]
over $\mathring{Q}_{d,l}$. So we obtain an embedding $e_{d,l}: P_{d,l}\embed Q_d^l$ through the composition
\[
P_{d,l}\stackrel{\sim}{\to} E_{d,l}^l\embed Q_d^l.
\]
For each $r$ with $l<r<d$, we have a commtative diagram
\[
\xymatrix@C=4pc{
Q_{d,r}\times_{Q_{r}}P_{r,l}\ar@{^(->}[r]^-{1\times e_{r,l}}\ar[d]_{\phi_{d,r,l}} &  Q_{d,r}\times_{Q_{r}}Q_{r}^l\ar[d]^{\phi_{d,r}^l}\\
P_{d,l}\ar@{^(->}[r]^-{e_{d,l}} & Q_d^l
}
\]
We have that $\Img(\phi_{d,r,l})=Y_{d,r,l}$, that $\Img(1\times e_{r,l})=Q_{d,r}\times_{Q_r}E_{r,l}^l$, and that $\phi_{d,r}^l$ maps $Q_{d,r}\times_{Q_r}E_{r,l}^l$ onto $E_{d,l}^l\cap Z_{d,r}^l$. It follows that $e_{d,l}$ maps $Y_{d,r,l}$ onto $E_{d,l}^l\cap Z_{d,r}^l$.



Next, we show that $e_{d,l}: P_{d,l}\to Q_d^l$ maps $\mathring{Y}_{d,r,l}$ isomorphically onto $E_{d,l}^l\cap \mathring{Z}_{d,r}^l$ for $l<r<d$. We have a commutative diagram
\[
\xymatrix@C=2pc{
Q_{d,r}\times_{Q_{r}}\mathring{P}_{r,l}\ar@{^(->}[r]^-{1\times e_{r,l}}\ar@{^(->}[d]_{\phi_{d,r,l}} & Q_{d,r}\times_{Q_{r}}(Q_{r}^l\setminus Z_{r,r-1}^l)\ar[d]_{\phi_{d,r}^l}\ar@{=}[r] & Q_{d,r}\times_{Q_r}(Q_r^{r-1}\setminus \ds\bigcup_{i=l+1}^{r-1} Z_{r,i}^{r-1})\ar@{^(->}[d]^{\phi_{d,r}^{r-1}} \\
P_{d,l}\setminus Y_{d,r-1,l}\ar@{^(->}[r]^{e_{d,l}} & Q_d^l\setminus Z_{d,r-1}^l \ar@{=}[r] &
Q_d^{r-1}\setminus \ds\bigcup_{i=l+1}^{r-1}Z_{d,i}^{r-1}
}
\]
We see that $e_{r,l}$ maps $\mathring{P}_{r,l}$ isomorphically onto $E_{r,l}^l\setminus Z_{r,r-1}^l$ in $Q_r^l\setminus Z_{r,r-1}^l$, $E_{r,l}^l\setminus Z_{r,r-1}^l$, which can be identified with $E_{r,l}^{r-1}\subset Q_r^{r-1}\setminus\bigcup_{i=l+1}^{r-1} Z_{r,i}^{r-1}$ {(by Lemma \ref{lem_E})}, and $\phi_{d,r}^{r-1}$ maps $Q_{d,r}\times_{Q_r}E_{r,l}^{r-1}$ isomorphically onto $E_{d,l}^{r-1}\cap Z_{d,r}^{r-1}$, which is identified with $E_{d,l}^l\cap \mathring{Z}_{d,r}^l$. On the other hand, $\phi_{d,r,l}$ maps $Q_{d,r}\times_{Q_{r}}\mathring{P}_{r,l}$ isomorphically onto $\mathring{Y}_{d,r,l}\subset P_{d,l}\setminus Y_{d,r-1,l}$. It follows that $e_{d,l}$ maps $\mathring{Y}_{d,r,l}$ isomorphically onto $E_{d,l}^l\cap \mathring{Z}_{d,r}^l$. Thus the case that $\sigma\in\Sigma_1$ is constructed.


Suppose we have constructed the isomorphisms for all $\sigma\in\Sigma_m$ for some $m$. We now construct the isomorphisms for $\sigma\in \Sigma_{m+1}$. Now write $\sigma=(l,\tau)$ and let $t=\lt(\tau)$. By induction hypothesis, for any $d>t$, we have an isomorphism $P_{d,\tau}\stackrel{\sim}{\to} E_{d,\tau}^t$ which maps $Y_{d,r,\tau}$ onto $E_{d,\tau}^t\cap Z_{d,r}^t$ and maps  $\mathring{Y}_{d,r,\tau}$ isomorphically onto $E_{d,\tau}^t\cap \mathring{Z}_{d,r}^t$. Let $d>l$. {By Lemma \ref{lem_E}}, $E_{d,\sigma}^l$ is the exceptional divisor of the blowup of $E_{d,\tau}^t\setminus \tilde Z_{d,l-1}^t$ along $E_{d,\tau}^t\cap \mathring{Z}_{d,l}^t$. Since the isomorphism $P_{d,\tau}\stackrel{\sim}{\to} E_{d,\tau}^t$ maps $\mathring{Y}_{d,l,\tau}$ isomorphically onto $E_{d,\tau}^t\cap \mathring{Z}_{d,l}^t$, we have an isomorphism
\[
\Bl_{\mathring{Y}_{d,l,\tau}}(P_{d,\tau}\setminus Y_{d,l-1,\tau})\stackrel{\sim}{\to}\Bl_{E_{d,\tau}^t\cap \mathring{Z}_{d,l}^t} (E_{d,\tau}^t\setminus \tilde Z_{d,l-1}^t)
\]
which maps the exceptional divisor $E_{d,\sigma}^l$ of the blowup $\Bl_{\mathring{Y}_{d,l,\tau}}(P_{d,\tau}\setminus Y_{d,l-1,\tau})$ isomorphically onto the exceptional divisor $E_{d,\sigma}^l$ of $\Bl_{E_{d,\tau}^t\cap \mathring{Z}_{d,l}^t}(E_{d,\tau}^t\setminus Z_{d,l-1}^t)$. On the other hand, the exceptional divisor of $\Bl_{\mathring{Y}_{d,l,\tau}}(P_{d,\tau}\setminus Y_{d,l-1,\tau})$ is isomorphic to the projective normal bundle of $\mathring{Y}_{d,l,\tau}$ in $P_{d,\tau}\setminus Y_{d,l-1,\tau}$ or just in $P_{d,\tau}$. We know that $\phi_{d,l,\tau}: Q_{d,l}\times_{Q_l}P_{l,\tau}\to P_{d,\tau}$ maps $Q_{d,l}\times_{Q_l}\mathring{P}_{l,\tau}$ isomorphically onto $\mathring{Y}_{d,l,\tau}$, and by Proposition \ref{Prop-NormalBundle} the normal bundle of $Q_{d,l}\times_{Q_l}\mathring{P}_{l,\tau}$ in $P_{d,\tau}$ is $\pi_*\EExt^1({\bar{\mathring q}}_{d,l,\tau}^*\mathcal{T}_{d,l},{\bar{\mathring p}}_{d,l,\tau}^*\mathring{\mathcal{X}}_{l,\tau})$. Hence the projective normal bundle of $\mathring{Y}_{d,l,\tau}$ in $P_{d,\tau}$ is isomorphic to $\mathbb{P}(\pi_*\EExt^1({\bar{\mathring q}}_{d,l,\tau}^*\mathcal{T}_{d,l},{\bar{\mathring p}}_{d,l,\tau}^*\mathring{\mathcal{X}}_{l,\tau}))=P_{d,l,\tau}=P_{d,\sigma}$. Thus we obtain an isomorphism $P_{d,\sigma}\stackrel{\sim}{\to} E_{d,\sigma}^l$. Next we show that this isomorphism maps $Y_{d,r,\sigma}$ onto $E_{d,\sigma}^l\cap Z_{d,r}^l$ and maps $\mathring{Y}_{d,r,\sigma}$ isomorphically onto $E_{d,\sigma}\cap \mathring{Z}_{d,r}^l$.

Let $e_{d,\sigma}: P_{d,\sigma}\embed Q_{d}^l$ denote the embedding obtained from the composition $P_{d,\sigma}\stackrel{\sim}{\to}E_{d,\sigma}^l\embed Q_{d,\sigma}^l$ for each $d>l$. Let $d>r>l$. Then we have a commutative diagram
\[
\xymatrix@C=3pc{
Q_{d,r}\times_{Q_r}P_{r,\sigma}\ar@{^(->}[r]^-{1\times e_{r,\sigma}}\ar[d]_{\phi_{d,r,\sigma}} & Q_{d,r}\times_{Q_r}Q_{r}^l \ar[d]^{\phi_{d,r}^l} \\
P_{d,\sigma}\ar@{^(->}[r]^-{e_{d,\sigma}} & Q_{d,\sigma}^l
}
\]
We have that $\Img(\phi_{d,r,\sigma})=Y_{d,r,\sigma}$, that $\Img(1\times e_{r,\sigma})=Q_{d,r}\times_{Q_r}E_{r,\sigma}^l$, and that $\phi_{d,r}^l$ maps $Q_{d,r}\times_{Q_r}E_{r,\sigma}^l$ onto $E_{d,\sigma}^l\cap Z_{d,r}^l$. It follows that $e_{d,\sigma}$ maps $Y_{d,r,\sigma}$ onto $E_{d,\sigma}^l\cap Z_{d,r}^l$.

Next, we show that $e_{d,\sigma}: P_{d,\sigma}\embed Q_d^l$ maps $\mathring{Y}_{d,r,\sigma}$ isomorphically onto $E_{d,\sigma}^l\cap \mathring{Z}_{d,r}^l$ for any $r$, $l<r<d$. We have a commutative diagram
\[
\xymatrix@C=2pc{
Q_{d,r}\times_{Q_{r}}\mathring{P}_{r,\sigma}\ar@{^(->}[r]^-{1\times e_{r,\sigma}}\ar@{^(->}[d]_{\phi_{d,r,\sigma}} & Q_{d,r}\times_{Q_{r}}(Q_{r}^l\setminus Z_{r,r-1}^l)\ar[d]_{\phi_{d,r}^l}\ar@{=}[r] & Q_{d,r}\times_{Q_r}(Q_r^{r-1}\setminus \ds\bigcup_{i=l+1}^{r-1} Z_{r,i}^{r-1})\ar@{^(->}[d]^{\phi_{d,r}^{r-1}} \\
P_{d,\sigma}\setminus Y_{d,r-1,\sigma}\ar@{^(->}[r]^{e_{d,\sigma}} & Q_d^l\setminus Z_{d,r-1}^l \ar@{=}[r] &
Q_d^{r-1}\setminus \ds\bigcup_{i=l+1}^{r-1}Z_{d,i}^{r-1}
}
\]
We see that $e_{r,\sigma}$ maps $\mathring{P}_{r,\sigma}$ isomorphically onto $E_{r,\sigma}^l\setminus Z_{r,r-1}^l$ in $Q_r^l\setminus Z_{r,r-1}^l$, that $E_{r,\sigma}^l\setminus Z_{r,r-1}^l$ can be identified with $E_{r,\sigma}^{r-1}\subset Q_r^{r-1}\setminus\bigcup_{i=l+1}^{r-1} Z_{r,i}^{r-1}$ {(by Lemma \ref{lem_E})}, and that $\phi_{d,r}^{r-1}$ maps $Q_{d,r}\times_{Q_r}E_{r,l}^{r-1}$ isomorphically onto $E_{d,l}^{r-1}\cap Z_{d,r}^{r-1}$, which is identified with $E_{d,l}^l\cap \mathring{Z}_{d,r}^l$. On the other hand, $\phi_{d,r,\sigma}$ maps $Q_{d,r}\times_{Q_{r}}\mathring{P}_{r,\sigma}$ in isomorphically onto $\mathring{Y}_{d,r,\sigma}\subset P_{d,\sigma}\setminus Y_{d,r-1,\sigma}$. It follows that $e_{d,\sigma}$ maps $\mathring{Y}_{d,r,\sigma}$ isomorphically onto $E_{d,\sigma}^l\cap \mathring{Z}_{d,r}^l$. Thus the case that $\sigma\in\Sigma_{m+1}$ is constructed.
\end{proof}

\begin{prop}
Let $\sigma\in\Sigma$. Then for any $d$ and $l$, $d>l\geq \lt(\sigma)$, there is an isomorphism $P_{d,\sigma}\setminus Y_{d,l,\sigma}\stackrel{\sim}{\to} E_{d,\sigma}^l$ which maps $\mathring{Y}_{d,r,\sigma}$ isomorphically onto $E_{d,\sigma}^l\cap \mathring{Z}_{d,r}^l$ for any $r$, $l<r<d$;
\end{prop}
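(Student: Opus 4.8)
The plan is to derive this from the preceding Proposition by a short bookkeeping argument that moves from the level $t:=\lt(\sigma)$ down to the level $l\ge t$. In the base case $l=t$ there is nothing to do: the chain of closed subschemes of $P_{d,\sigma}$ starts with $Y_{d,t,\sigma}=\varnothing$, so $P_{d,\sigma}\setminus Y_{d,l,\sigma}=P_{d,\sigma}$, and the claimed isomorphism, together with the assertion about the $\mathring Y$'s, is precisely the isomorphism $i_{d,\sigma}: P_{d,\sigma}\stackrel{\sim}{\to}E_{d,\sigma}^{t}$ and its property $(2)$ from the preceding Proposition.

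So I would then assume $l>t$. First I would apply Lemma \ref{lem_E}(1) with $\tau=\sigma$, $m=t$ and $j=l$ (legitimate since $t\le t<l<d$) to obtain $E_{d,\sigma}^{l}=E_{d,\sigma}^{t}\setminus Z_{d,l}^{t}$. Next, property $(1)$ of the preceding Proposition, applied with $r=l$ (here $t<l<d$), tells me that $i_{d,\sigma}$ carries $Y_{d,l,\sigma}$ onto $E_{d,\sigma}^{t}\cap Z_{d,l}^{t}$; restricting $i_{d,\sigma}$ to the complements of these two closed subsets then produces an isomorphism $P_{d,\sigma}\setminus Y_{d,l,\sigma}\stackrel{\sim}{\to}E_{d,\sigma}^{t}\setminus Z_{d,l}^{t}=E_{d,\sigma}^{l}$, which is the map sought.

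It remains to check that this restricted map sends $\mathring{Y}_{d,r,\sigma}$ isomorphically onto $E_{d,\sigma}^{l}\cap\mathring{Z}_{d,r}^{l}$ for $l<r<d$. Since $l\le r-1$, one has $\mathring{Y}_{d,r,\sigma}\subset P_{d,\sigma}\setminus Y_{d,r-1,\sigma}\subset P_{d,\sigma}\setminus Y_{d,l,\sigma}$, so the restriction is defined on $\mathring{Y}_{d,r,\sigma}$, and by property $(2)$ of the preceding Proposition it maps $\mathring{Y}_{d,r,\sigma}$ isomorphically onto $E_{d,\sigma}^{t}\cap\mathring{Z}_{d,r}^{t}$. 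I would finish by identifying $E_{d,\sigma}^{t}\cap\mathring{Z}_{d,r}^{t}$ with $E_{d,\sigma}^{l}\cap\mathring{Z}_{d,r}^{l}$, using — as in the proof of Lemma \ref{lem_E} — that the composite blowup $b: Q_d^{l}\to Q_d^{t}$ is an isomorphism over $Q_d^{t}\setminus Z_{d,l}^{t}$ under which the proper transforms $Z_{d,r}^{l}$, $Z_{d,r-1}^{l}$ restrict to $Z_{d,r}^{t}$, $Z_{d,r-1}^{t}$: since $r-1>t$ one has $\hat Z_{d,r-1}^{t}=Z_{d,r-1}^{t}$ and the nesting $Z_{d,l}^{t}\subset Z_{d,r-1}^{t}$, so $\mathring{Z}_{d,r}^{t}$ is disjoint from $Z_{d,l}^{t}$ and is matched, over that open set, with $\mathring{Z}_{d,r}^{l}$ (checking separately the two cases $r-1>l$ and $r-1=l$ in the definition of $\hat Z_{d,r-1}^{l}$); as $E_{d,\sigma}^{l}$ is contained in that open set, this yields $E_{d,\sigma}^{t}\cap\mathring{Z}_{d,r}^{t}=E_{d,\sigma}^{l}\cap\mathring{Z}_{d,r}^{l}$. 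The only real inputs are the preceding Proposition and Lemma \ref{lem_E}(1); the main obstacle is just the mild bookkeeping in this last step — keeping track of proper transforms away from the blowup centres and of the $\hat Z/\mathring Z$ conventions — and I expect no substantive difficulty.
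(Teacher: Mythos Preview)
Your proposal is correct and follows essentially the same approach as the paper: handle $l=t$ by the preceding Proposition directly, and for $l>t$ restrict the isomorphism $i_{d,\sigma}$ to the complement of $Y_{d,l,\sigma}$ (mapped onto $E_{d,\sigma}^t\cap Z_{d,l}^t$ by property (1)), then use the identification $Q_d^t\setminus Z_{d,l}^t=Q_d^l\setminus\bigcup_{i=t+1}^l Z_{d,i}^l$ (equivalently, Lemma~\ref{lem_E}(1)) to match $E_{d,\sigma}^t\setminus Z_{d,l}^t$ with $E_{d,\sigma}^l$ and $E_{d,\sigma}^t\cap\mathring Z_{d,r}^t$ with $E_{d,\sigma}^l\cap\mathring Z_{d,r}^l$. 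Your final paragraph spells out the $\hat Z/\mathring Z$ bookkeeping a bit more carefully than the paper does, but the argument is the same.
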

\begin{proof}
The case that $l=\lt(\sigma)$ is proved in the above proposition. We now prove the case that $l>\lt(\sigma)$. Let $t=\lt(\sigma)$. Then by the above proposition, we have an isomorphism $P_{d,\sigma}\stackrel{\sim}{\to} E_{d,\sigma}^t$ which maps $Y_{d,l,\sigma}$ onto $E_{d,\sigma}^t\cap Z_{d,l}^t$ for each $r$. This isomorphism restricts to an isomorphism $P_{d,\sigma}\setminus Y_{d,l,\sigma}\stackrel{\sim}{\to} E_{d,\sigma}^t\setminus Z_{d,l}^t$. Under the identification $Q_d^t\setminus Z_{d,l}^t=Q_d^l\setminus \bigcup_{i=t+1}^l Z_{d,i}^l$, $E_{d,\sigma}^t\setminus Z_{d,l}^t$ is identified with $E_{d,\sigma}^l$, and $E_{d,\sigma}^t\cap \mathring{Z}_{d,r}^t$ is identified with $E_{d,\sigma}^l\cap \mathring{Z}_{d,r}^l$ for any $r$, $l<r<d$. Hence we obtain an isomorphism $P_{d,\sigma}\setminus Y_{d,l,\sigma}\stackrel{\sim}{\to} E_{d,\sigma}^l$ which maps $\mathring{Y}_{d,r,\sigma}$ isomorphically onto $E_{d,\sigma}^l\cap \mathring{Z}_{d,r}^l$ for any $r$, $l<r<d$.
\end{proof}

Let $\sigma=(l_m,\cdots,l_1)\in\Sigma$ and $d>l_m$. We have the following commutative diagram
\[
\CD{
& E_{d,\sigma}^{l_m}\ar[d] \\
P_{d,\sigma}\ar[ru]^{\simeq}\ar[d] &E_{d,\tau}^{l_{m-1}}\cap \mathring{Z}_{d,l_m}^{l_{m-1}}\ar@{^(->}[r] & E_{d,\tau}^{l_{m-1}}\ar[d] \\
\mathring{R}_{d,l_m,\tau}\ar[ru]^{\simeq}\ar@{^(->}[r] & P_{d,\tau}\ar[ru]^{\simeq}\ar[d] & \qquad\ddots\ar@{^(->}[r] & E_{d,l_3,l_2,l_1}^{l_3}\ar[d] \\
&\qquad\ddots\ar@{^(->}[r] & P_{d,l_3,l_2,l_1}\ar[d]\ar[ru]^{\simeq} & E_{d,l_2,l_1}^{l_2}\cap \mathring{Z}_{d,l_3}^{l_2}\ar@{^(->}[r] & E_{d,l_2,l_1}^{l_2}\ar[d] \\
& & \mathring{R}_{d,l_3,l_2,l_1}\ar@{^(->}[r]\ar[ru]^{\simeq} & P_{d,l_2,l_1}\ar[d]\ar[ru]^{\simeq} & E_{d,l_1}^{l_1}\cap \mathring{Z}_{d,l_2}^{l_1}\ar@{^(->}[r] & E_{d,l_1}^{l_1}\ar[d] \\
& & & \mathring{R}_{d,l_2,l_1}\ar@{^(->}[r]\ar[ru]^{\simeq} & P_{d,l_1}\ar[d]\ar[ru]^{\simeq} & \mathring{Z}_{d,l_1}\ar@{^(->}[r] & Q_d \\
& & & & \mathring{R}_{d,l_1}\ar[ru]^{\simeq}\ar@{^(->}[r]_{\mathring{\phi}_{d,l_1}} & P_d \ar@{=}[ru]
}
\]
where $\tau=(l_{m-1},\cdots,l_1,l_0)$. Thus we obtain a sequence of canonical identifications:
\[
\mathring{R}_{d,l_0}=\mathring{Z}_{d,l_0},\quad P_{d,l_0}=E_{d,l_0}^{l_0},\quad \mathring{R}_{d,l_1,l_0}=E_{d,l_0}^{l_0}\cap \mathring{Z}_{d,l_1}^{l_0},\quad \cdots,\quad P_{d,\sigma}=E_{d,\sigma}^{l_m}.
\]

Recall that $Q_d$ parametrizes quotient of the form $V_{\mathbb{P}^1}\surj X_1$ with $\deg X_1=d$, and the subset $\mathring{Z}_{d,l_1}$ of $Q_d$ parametrizes such quotients with $\deg X_1^\tor=d-l_1$.

Let $x_1=[V_{\mathbb{P}^1}\surj X_1]\in\mathring{Z}_{d,l_1}=\mathring{R}_{d,l_1}$. Using the identification,
$$E_{d,l_1}^{l_1}=P_{d,l_1}=\mathbb{P}(\pi_*\EExt^1(\bar{\mathring{q}}^*_{d,l_1}\mathcal{T}_{d,l_1}, \bar{\mathring{p}}_{d,l_1}^*\mathring{\mathcal{X}}_{l_1})),$$  we see that the fiber of $E_{d,l_1}^{l_1}\to \mathring{Z}_{d,l_1}$ over $x_1$ is $\mathbb{P}(\Ext^1((\bar{\mathring{q}}^*_{d,l}\mathcal{T}_{d,l})_{x_1}, (\bar{\mathring{p}}_{d,l}^*\mathring{\mathcal{X}}_{l})_{x_1}))$. On the other hand, we have an exact sequence
\[
0\to \bar{\mathring{q}}^*_{d,l_1}\mathcal{T}_{d,l_1}\to \bar{\mathring{\phi}}^*_{d,l_1}\mathcal{X}_d  \to \bar{\mathring{p}}_{d,l_1}^*\mathring{\mathcal{X}}_{l_1}\to 0
\]
whose restriction to $\mathbb{P}^1\times\{x_1\}$,
\[
0\to (\bar{\mathring{q}}^*_{d,l_1}\mathcal{T}_{d,l_1})_{x_1}\to (\bar{\mathring{\phi}}^*_{d,l_1}\mathcal{X}_d)_{x_1}  \to (\bar{\mathring{p}}_{d,l_1}^*\mathring{\mathcal{X}}_{l_1})_{x_1}\to 0
\]
is also an exact sequence. Since $\mathring{\phi}_{d,l_1}$ is an inclusion map, we have that $(\bar{\mathring{\phi}}^*_{d,l_1}\mathcal{X}_d)_{x_1}=X_1$. Hence $(\bar{\mathring{q}}^*_{d,l_1}\mathcal{T}_{d,l_1})_{x_1}=X_1^\tor$ and $(\bar{\mathring{p}}_{d,l_1}^*\mathring{\mathcal{X}}_{l_1})_{x_1}=X_1^\fre$, and the fiber over $x_1$ is $\mathbb{P}(\Ext^1(X_1^\tor,X_1^\fre))$, which parametrizes non-split extensions of the form $[X_1^\fre\inj X_2\surj X_1^\tor]$. Thus $E_{d,l_1}^{l_1}$ parametrizes sequences of the form
\[
([V_{\mathbb{P}^1}\surj X_1],[X_1^\fre\inj X_2\surj X_1^\tor]).
\]
with $\deg X_1^\tor=d-l_1$ and $\deg X_2^\tor<d-l_1$. Using the identification $\mathring{R}_{d,l_2,l_1}=\mathring{Y}_{d,l_2,l_1}=E_{d,l_1}^{l_1}\cap \mathring{Z}_{d,l_2}^{l_1}$, we see that $E_{d,l_1}^{l_1}\cap \mathring{Z}_{d,l_2}^{l_1}$ parametrizes such sequences with $\deg X_1^\tor=d-l_1$ and $\deg X_2^\tor =d-l_2$, by the definition of $\mathring{Y}_{d,l_2,l_1}$.

Let $x_2=([V_{\mathbb{P}^1}\surj X_1],[X_1^\fre\inj X_2\surj X_1^\tor])\in E_{d,l_1}^{l_1}\cap \mathring{Z}_{d,l_2}^{l_1}=\mathring{R}_{d,l_2,l_1}$. Using the identification $E_{d,l_2,l_1}^{l_2}=P_{d,l_2,l_1}=\mathbb{P}(\EExt^1({\bar{\mathring q}}_{d,l_2,l_1}^*\mathcal{T}_{d,l_2},{\bar{\mathring p}}_{d,l_2,l_1}^*\mathring{\mathcal{X}}_{l_2,l_1}))$, we have that
the fiber of $E_{d,l_2,l_1}^{l_2}\to E_{d,l_1}^{l_1}\cap \mathring{Z}_{d,l_2}^{l_1}$ over the point $x_2$ is $\mathbb{P}(\EExt^1(({\bar{\mathring q}}_{d,l_2,l_1}^*\mathcal{T}_{d,l_2})_{x_2},({\bar{\mathring p}}_{d,l_2,l_1}^*\mathring{\mathcal{X}}_{l_2,l_1})_{x_2}))$. On the other hand, we have an exact sequence
\[
0\to {\bar{\mathring q}}_{d,l_2,l_1}^*\mathcal{T}_{d,l_2}\to \bar{\mathring{\phi}}_{d,l_2,l_1}^*\mathcal{X}_{d,l_1}\to {\bar{\mathring p}}_{d,l_2,l_1}^*\mathring{\mathcal{X}}_{l_2,l_1}\to 0
\]
whose restriction to $\mathbb{P}^1\times\{x_2\}$,
\[
0\to ({\bar{\mathring q}}_{d,l_2,l_1}^*\mathcal{T}_{d,l_2})_{x_2}\to (\bar{\mathring{\phi}}_{d,l_2,l_1}^*\mathcal{X}_{d,l_1})_{x_2}\to ({\bar{\mathring p}}_{d,l_2,l_1}^*\mathring{\mathcal{X}}_{l_2,l_1})_{x_2}\to 0
\]
is also an exact sequence. Since $\bar{\mathring{\phi}}_{d,l_2,l_1}^*$ is an inclusion map, we have $(\bar{\mathring{\phi}}_{d,l_2,l_1}^*\mathcal{X}_{d,l_1})_{x_2}=X_2$. Hence $({\bar{\mathring q}}_{d,l_2,l_1}^*\mathcal{T}_{d,l_2})_{x_2}=X_2^\tor$, $({\bar{\mathring p}}_{d,l_2,l_1}^*\mathring{\mathcal{X}}_{l_2,l_1})_{x_2}=X_2^\fre$, and the fiber over $x_2$ is $\mathbb{P}(\Ext^1(X_2^\tor, X_2^\fre))$, which parametrizes non-split extensions of the form $[X_2^\fre\inj X_3\surj X_2^\tor]$. Thus $E_{d,l_2,l_1}^{l_2}$ parametrizes sequences of the form:
\[
([V_{\mathbb{P}^1}\surj X_1],[X_1^\fre\inj X_2\surj X_1^\tor],[X_2^\fre\inj X_3\surj X_2^\tor]).
\]
with $\deg X_1^\tor=d-l_1$, $\deg X_2^\tor=d-l_2$ and $\deg X_3^\tor<d-l_2$. Since $\mathring{R}_{d,l_3,l_2,l_1}=\mathring{Y}_{d,l_3,l_2,l_1}=E_{d,l_2,l_1}^{l_2}\cap \mathring{Z}_{d,l_3}^{l_2}$, by the definition of $\mathring{Y}_{d,l_3,l_2,l_1}$, $E_{d,l_2,l_1}^{l_2}\cap \mathring{Z}_{d,l_3}^{l_2}$ parametrizes such sequences with $\deg X_1^\tor=d-l_1$, $\deg X_2^\tor=d-l_2$ and $\deg X_3^\tor=d-l_3$.

Continuing this argument, we eventually obtain the parameter-space interpretation for $E_{d,\sigma}^{l_m}$:
$E_{d,\sigma}^{l_m}$ parametrizes sequences of the form
\[
([V_{\mathbb{P}^1}\surj X_1],[X_1^\fre\inj X_2\surj X_1^\tor],\cdots, [X_{m}^\fre\inj X_{m+1}\surj X_{m}^\tor])
\]
with $\deg X_i^\tor=d-l_i$, $(i=1,\cdots,m)$ and $\deg X_{m+1}^\tor <d-l_m$. For any $r$ with $d>r>l_m$, $E_{d,\sigma}^{l_m}\cap \mathring{Z}_{d,r}^{l_m}$ parametrizes such sequences with  $\deg X_i^\tor=d-l_i$, $(i=1,\cdots,m)$ and $\deg X_{m+1}^\tor =d-r$.

Next, we deal with the modular interpretation of $E_{d,\sigma}^r$ for any $r$, $d>r>l_m$. By the lemma before, we have $E_{d,\sigma}^r=E_{d,\sigma}^{l_m}\setminus Z_{d,r}^{l_m}$. Since $Z_{d,r}^{l_m}=\bigcup_{i=l_m+1}^r \mathring{Z}_{d,i}^{l_m}$, we know that $E_{d,\sigma}^{l_m}\cap Z_{d,r}^{l_m}$ parametrizes sequences as above with $\deg X_i^\tor=d-l_i$, $(i=1,\cdots,m)$ and $\deg X_{m+1}^\tor \geq d-r$. Therefore, $E_{d,\sigma}^r$, which equals $E_{d,\sigma}^{l_m}\setminus Z_{d,r}^{l_m}$, parametrizes such sequences with $\deg X_i^\tor=d-l_i$, $(i=1,\cdots,m)$ and $\deg X_{m+1}^\tor < d-r$.

In summary, we have
\begin{prop}\label{stratum}
Let $d>r$. For any $\sigma=(l_m,\cdots,l_2,l_1)\in\Sigma$ with $l_m<r$, the stratum $E_{d,\sigma}^r$ of $Q_d^r$ parametrizes the sequences of the form
\[
([V_{\mathbb{P}^1}\surj X_1],[X_1^\fre\inj X_2\surj X_1^\tor],\cdots, [X_{m}^\fre\inj X_{m+1}\surj X_{m}^\tor])
\]
with $\deg X_i^\tor=d-l_i$, $(i=1,\cdots,m)$ and $\deg X_{m+1}^\tor < d-r$.
\end{prop}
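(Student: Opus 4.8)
The statement is a set-theoretic identification of the points of the locally closed stratum $E_{d,\sigma}^r$, so the plan is to propagate the modular meaning through the tower of canonical identifications
\[
\mathring R_{d,l_1}=\mathring Z_{d,l_1},\quad P_{d,l_1}=E_{d,l_1}^{l_1},\quad \mathring R_{d,l_2,l_1}=E_{d,l_1}^{l_1}\cap\mathring Z_{d,l_2}^{l_1},\quad\cdots,\quad P_{d,\sigma}=E_{d,\sigma}^{l_m}
\]
supplied by the preceding proposition, reading off at each stage what a point records by restricting the relevant universal extension to its fiber. I would first treat $E_{d,l_1}^{l_1}=P_{d,l_1}=\mathbb P(\pi_*\EExt^1(\bar{\mathring q}_{d,l_1}^*\mathcal T_{d,l_1},\bar{\mathring p}_{d,l_1}^*\mathring{\mathcal X}_{l_1}))$: a point below it is $x_1=[V_{\mathbb P^1}\surj X_1]\in\mathring Z_{d,l_1}$ with $\deg X_1^\tor=d-l_1$, and restricting the exact sequence $0\to\bar{\mathring q}_{d,l_1}^*\mathcal T_{d,l_1}\to\bar{\mathring\phi}_{d,l_1}^*\mathcal X_d\to\bar{\mathring p}_{d,l_1}^*\mathring{\mathcal X}_{l_1}\to0$ (a special case of (\ref{SES-R(d,r,l)})) to $\mathbb P^1\times\{x_1\}$, together with the fact that $\mathring\phi_{d,l_1}$ is an inclusion so $(\bar{\mathring\phi}_{d,l_1}^*\mathcal X_d)_{x_1}=X_1$, forces $(\mathcal T_{d,l_1})_{x_1}=X_1^\tor$ and $(\mathring{\mathcal X}_{l_1})_{x_1}=X_1^\fre$. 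Hence the fiber over $x_1$ is $\mathbb P(\Ext^1(X_1^\tor,X_1^\fre))$, which by the discussion in Section 2 is exactly the set of non-split extensions $[X_1^\fre\inj X_2\surj X_1^\tor]$.

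Next I would run this as an induction on the length of the sequence: assuming $E_{d,l_j,\dots,l_1}^{l_j}$ has been identified with the set of chains $([V_{\mathbb P^1}\surj X_1],[X_1^\fre\inj X_2\surj X_1^\tor],\dots,[X_j^\fre\inj X_{j+1}\surj X_j^\tor])$ satisfying $\deg X_i^\tor=d-l_i$ for $i\le j$ and $\deg X_{j+1}^\tor<d-l_j$, I use $\mathring R_{d,l_{j+1},\dots,l_1}=\mathring Y_{d,l_{j+1},\dots,l_1}=E_{d,l_j,\dots,l_1}^{l_j}\cap\mathring Z_{d,l_{j+1}}^{l_j}$, whose defining property (the definition of $\mathring Y$) singles out the chains with $\deg X_{j+1}^\tor=d-l_{j+1}$; then the fiber of $E_{d,l_{j+1},\dots,l_1}^{l_{j+1}}=P_{d,l_{j+1},\dots,l_1}=\mathbb P(\pi_*\EExt^1(\bar{\mathring q}^*\mathcal T_{d,l_{j+1}},\bar{\mathring p}^*\mathring{\mathcal X}_{l_{j+1},\dots,l_1}))$ over such a point $x_{j+1}$ is, again by restricting (\ref{SES-R(d,r,l)}) to $\mathbb P^1\times\{x_{j+1}\}$ and using that $\mathring\phi_{d,l_{j+1},\dots,l_1}$ is an inclusion with $(\bar{\mathring\phi}^*\mathcal X_{d,l_j,\dots,l_1})_{x_{j+1}}=X_{j+1}$, equal to $\mathbb P(\Ext^1(X_{j+1}^\tor,X_{j+1}^\fre))$, i.e. the non-split extensions $[X_{j+1}^\fre\inj X_{j+2}\surj X_{j+1}^\tor]$. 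After $m$ steps this yields the asserted description of $E_{d,\sigma}^{l_m}$, with $\deg X_i^\tor=d-l_i$ for $1\le i\le m$ and $\deg X_{m+1}^\tor<d-l_m$.

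Finally, to go from level $l_m$ to an arbitrary level $r$ with $d>r>l_m$, I would invoke Lemma \ref{lem_E}(1) in the form $E_{d,\sigma}^r=E_{d,\sigma}^{l_m}\setminus Z_{d,r}^{l_m}$, combine it with $Z_{d,r}^{l_m}=\bigcup_{i=l_m+1}^{r}\mathring Z_{d,i}^{l_m}$, and observe (from the step above) that $E_{d,\sigma}^{l_m}\cap\mathring Z_{d,i}^{l_m}$ parametrizes precisely the chains with $\deg X_{m+1}^\tor=d-i$; removing $Z_{d,r}^{l_m}$ therefore discards exactly the chains with $\deg X_{m+1}^\tor\ge d-r$, leaving those with $\deg X_{m+1}^\tor<d-r$. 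The main obstacle I foresee is not conceptual but organizational: correctly matching, through the stack of fiber products and projective bundles, which pulled-back sheaf restricts to the torsion part $X_i^\tor$ and which to the free part $X_i^\fre$ over a given point, and checking that the nowhere-split property guaranteed by Theorem \ref{thm-universal} really forces each fiberwise extension to be non-split, so that the points of $P_{d,\sigma}$ correspond to chains in the sense of Definition \ref{def:cq} rather than only up to allowing split layers.
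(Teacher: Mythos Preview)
Your proposal is correct and follows essentially the same route as the paper: propagate the modular meaning up the tower of identifications $\mathring R_{d,l_1}=\mathring Z_{d,l_1}$, $P_{d,l_1}=E_{d,l_1}^{l_1}$, $\mathring R_{d,l_2,l_1}=E_{d,l_1}^{l_1}\cap\mathring Z_{d,l_2}^{l_1}$, \ldots, by restricting the exact sequence (\ref{SES-R(d,r,l)}) to each fiber to read off $X_i^\tor$ and $X_i^\fre$, and then pass from level $l_m$ to level $r$ via Lemma~\ref{lem_E}(1) and the decomposition $Z_{d,r}^{l_m}=\bigcup_{i=l_m+1}^r\mathring Z_{d,i}^{l_m}$. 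The only difference is presentational: the paper writes out the first two inductive steps explicitly rather than phrasing it as a formal induction, and your worry about non-splitness is already handled by the construction of $\mathbb P(\mathcal E)$ in \S2, whose fibers by definition parametrize only non-split extensions.
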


\begin{cor}\label{final-interp}
The boundary $\widetilde Q_d\setminus \mathring{Q}_d$ comes equipped with a natural stratification with strata
$E_{d,\sigma}$  indexed by  $\sigma=(l_m,\cdots,l_2,l_1)$ with $d>l_m >\dots >l_1 \geq 0$,.
The stratum $E_{d,\sigma}$  parametrizes the sequences of the form
\[
([V_{\mathbb{P}^1}\surj X_1],[X_1^\fre\inj X_2\surj X_1^\tor],\cdots, [X_{m}^\fre\inj X_{m+1}\surj X_{m}^\tor])
\]
such that  $\deg X_i^\tor=d-l_i$, $i=1,\cdots,m$, and  the last sheaf $X_{m+1}$ is the unique one that is  locally free.
\end{cor}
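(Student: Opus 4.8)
The plan is to deduce the statement from Proposition \ref{stratum} by specializing to the top of the blowup tree, $r = d-1$, and to handle separately the one family of strata that the hypothesis of Proposition \ref{stratum} does not reach.

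First I would record the two identifications $\widetilde Q_d = Q_d^{d-1}$ and $Z_{d,d-1} = Q_d \setminus \mathring{Q}_d$. Since every blowup center $Z_{d,0},\dots,Z_{d,d-1}$ is contained in the boundary $Q_d \setminus \mathring{Q}_d$, the composite morphism $Q_d^{d-1} \to Q_d$ is an isomorphism over $\mathring{Q}_d$; hence $\mathring{Q}_d$ is identified with an open subset of $\widetilde Q_d$ and $\widetilde Q_d \setminus \mathring{Q}_d = \bigcup_{i=0}^{d-1} Z_{d,i}^{d-1}$. Feeding this into the stratification $Q_d^{d-1} = (Q_d \setminus Z_{d,d-1}) \sqcup \bigsqcup_{\sigma \subset [d-1]} E_{d,\sigma}^{d-1}$ displayed before Lemma \ref{lem_E}, I obtain that the boundary is precisely $\bigsqcup_\sigma E_{d,\sigma}^{d-1}$, with $\sigma$ running over the subsequences of $[d-1] = (d-1,\dots,1,0)$, i.e. over tuples $(l_m,\dots,l_1)$ with $d > l_m > \cdots > l_1 \ge 0$ — exactly the index set asserted. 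Write $E_{d,\sigma} := E_{d,\sigma}^{d-1}$.

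Next I would describe each $E_{d,\sigma}$ modularly, splitting according to the leading term $l_m = \lt(\sigma)$. If $l_m < d-1$, Proposition \ref{stratum} applies with $r = d-1$ and says that $E_{d,\sigma}$ parametrizes sequences $([V_{\mathbb{P}^1} \surj X_1],[X_1^\fre \inj X_2 \surj X_1^\tor],\dots,[X_m^\fre \inj X_{m+1} \surj X_m^\tor])$ with $\deg X_i^\tor = d - l_i$ for $1 \le i \le m$ and $\deg X_{m+1}^\tor < d - (d-1) = 1$, hence $\deg X_{m+1}^\tor = 0$. If instead $l_m = d-1$, Proposition \ref{stratum} no longer applies, but then $E_{d,\sigma}^{d-1} = E_{d,\sigma}^{l_m} = P_{d,\sigma}$ under the canonical identifications assembled just before this corollary, and the modular description of $E_{d,\sigma}^{l_m}$ established in the paragraph preceding Proposition \ref{stratum} yields the same list of sequences, now with the condition $\deg X_{m+1}^\tor < d - l_m = 1$, which again forces $\deg X_{m+1}^\tor = 0$. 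In either case $X_{m+1}$ has vanishing torsion, i.e. is locally free.

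It then remains to verify the uniqueness clause and reconcile with Definition \ref{def:cq}. For $1 \le i \le m$ one has $\deg X_i^\tor = d - l_i \ge d - l_m \ge d - (d-1) = 1 > 0$, so every $X_i$ with $i \le m$ carries nonzero torsion and is not locally free; hence $X_{m+1}$ is the unique locally free sheaf in the sequence. Combined with the splitting criterion — the extension $[X_i^\fre \inj X_{i+1} \surj X_i^\tor]$ is non-split iff $\deg X_{i+1}^\tor < \deg X_i^\tor$, which holds here since $l_{i+1} > l_i$ — this matches the second bullet of Definition \ref{def:cq}, while the open stratum $\mathring{Q}_d$ corresponds to the case of the first bullet in which $X_1$ itself is locally free. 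This gives the asserted stratification of $\widetilde Q_d$ and, in particular, establishes Theorem \ref{hs}.

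The only step that is not pure bookkeeping is the family $\lt(\sigma) = d-1$: Proposition \ref{stratum} is stated under the hypothesis $l_m < r$, so for $r = d-1$ it is silent about those strata, and one must instead go back to the identification $E_{d,\sigma}^{\lt(\sigma)} = P_{d,\sigma}$ and the modular analysis of $P_{d,\sigma}$ carried out before Proposition \ref{stratum}. Everything else is assembling the identifications along the blowup tree together with the numerical inequalities $0 < d - l_m \le \cdots \le d - l_1$.
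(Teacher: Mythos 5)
Your proposal is correct and follows essentially the route the paper intends: the corollary is read off from Proposition \ref{stratum} at $r=d-1$ together with the stratification $Q_d^{d-1}=\mathring{Q}_d\sqcup\bigsqcup_{\sigma\subset[d-1]}E_{d,\sigma}^{d-1}$, the inequality $\deg X_{m+1}^\tor<1$ forcing $X_{m+1}$ to be locally free. Your explicit treatment of the strata with $\lt(\sigma)=d-1$ (via the identification $E_{d,\sigma}^{d-1}=E_{d,\sigma}^{\lt(\sigma)}=P_{d,\sigma}$ and the modular analysis preceding Proposition \ref{stratum}) and of the uniqueness clause is a careful filling-in of details the paper leaves implicit, not a departure from its argument.
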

This proves Theorem \ref{hs}.

\end{document}